\newtheorem{theorem}{Theorem}
\newtheorem{lemma}[theorem]{Lemma}
\newtheorem{cor}[theorem]{Corollary}
\newtheorem{prop}[theorem]{Proposition}
\newtheorem{exam}[theorem]{Example}
\newcommand{\R}{\mathbb{R}}
\newcommand{\N}{\mathbb{N}}
\newcommand{\Q}{\mathbb{Q}}
\newcommand{\K}{\mathcal{K}}
\newcommand{\B}{\mathcal{B}}
\newcommand{\D}{\mathcal{D}}
\newcommand{\M}{\mathcal{M}}
\newcommand{\omom}{\omega^\omega}
\newcommand{\tr}{\ge_T}
\newcommand{\te}{=_T}
\newcommand{\tq}{\ge_T}
\newcommand{\ctm}{\mathfrak{c}}
\newcommand{\cl}[1]{\overline{#1}}
\newcommand{\add}[1]{\mathop{\mathrm{add}}(#1)}
\newcommand{\cof}[1]{\mathop{\mathrm{cof}}(#1)}
\newcommand{\spec}[1]{\mathop{\mathrm{spec}}(#1)}
\newcommand{\h}{\text{h}}
\newcommand{\Pk}[1]{\left([#1]^{<\omega}\right)^{\omega}}
\newcommand{\pcf}[1]{\mathop{\mathrm{pcf}}(#1)}
\title{Tukey Order, Calibres and the Rationals}
\author{Paul Gartside}
\address{Department
    of Mathematics, University of Pittsburgh, Pittsburgh, PA~15260, USA}
\email{gartside@math.pitt.edu} 
\author{Ana Mamatelashvili}
\address{Department of Mathematics and Statistics, Auburn University, Auburn, AL~36849, USA}
\email{azm0105@auburn.edu}
\keywords{Partial order, Tukey order, space of rationals, metrizable space, analytic set, hereditarily Baire space}
\subjclass[2010]{03E04, 54E35, 54H05 (06A07, 54E52)}
\begin{document}

\begin{abstract}
One partially ordered set, $Q$, is a \emph{Tukey quotient} of another, $P$ -- denoted $P \geq_T Q$ -- if there is a map  $\phi : P \to Q$ carrying cofinal sets of $P$ to cofinal sets of $Q$.  Let $X$ be a space and denote by $\mathcal{K}(X)$ the set of compact subsets of $X$, ordered by inclusion. For certain separable metrizable spaces $M$, Tukey upper and lower bounds of $\mathcal{K}(M)$ are calculated. Results on invariants of $\mathcal{K}(M)$'s are deduced. The structure of all $\mathcal{K}(M)$'s under $\le_T$ is investigated. Particular emphasis is placed on the position of $\mathcal{K}(M)$ when $M$ is:  completely metrizable, the rationals $\mathbb{Q}$, co-analytic or analytic.  
\end{abstract}

\maketitle

\section{Introduction}
Given two directed sets $P$ and $Q$, we say $Q$ is a  \emph{Tukey quotient} of $P$, denoted $P \tq Q$,  if there is a map  $\phi : P \to Q$ carrying cofinal sets of $P$ to cofinal sets of $Q$. If $P$ and $Q$ are mutual Tukey quotients then they are said to be \emph{Tukey equivalent}, denoted $P \te Q$.
Introduced to study the notion of Moore-Smith convergence in topology \cite{MooreSmith,Tukey}, the Tukey order, $\tq$, on directed sets has become an important tool in the study of directed sets in general, and especially certain directed sets naturally arising in analysis and topology.

Key to the utility of the Tukey order is that it focusses on what happens cofinally in directed sets, and so is sufficiently coarse to allow comparison of very different directed sets, but nevertheless preserves many order invariants. 
Indeed if $P \tq Q$ then the cofinality (minimal size of a cofinal set) of $Q$ is no more than the cofinality of $P$, and the additivity (minimal size of an unbounded set) of $P$ is no more than the additivity of $Q$.
Similarly, if $P \tq Q$ and $P$ has calibre $(\kappa,\lambda,\mu)$ (every $\kappa$-sized subset contains a $\lambda$-sized subset all of whose $\mu$-sized subsets are bounded) then so does $Q$.
Fremlin was  the first to realize the relevance of the Tukey order in analysis, showing that a result of Bartoszynski \cite{Bart} and Raisonnier and Stern \cite{RS} on additivity of the measure and category ideals was due to a Tukey quotient between the relevant ideals. Then he started to systematically investigate the Tukey order on certain ideals arising in measure theory \cite{Fr3}.  Many of his ideals are families of compact subsets of a separable metrizable space, for example $\mathcal{E}_\mu$, the ideal of compact null subsets of $[0,1]$ and ${\small \text{NWD}}$, the ideal of compact nowhere dense subsets of $[0,1]$. Subsequently, in \cite{Frem}, Fremlin started to study the Tukey structure of $\K(M)$, the set of \emph{all} compact subsets of a separable metric space $M$. 
In this paper we develop further this line of investigation.

Let $\mathcal{M}$ be all separable metrizable spaces (up to homeomorphism), and $\K(\mathcal{M})$ all the Tukey equivalence classes of $\K(M)$ for $M$ in $\M$. 
Christensen had shown \cite{Chris} that if $M$ is separable metrizable and $\omega^\omega \tq \K(M)$ then $M$ is Polish. Fremlin observed that the initial structure of $\K(\M)$ under the Tukey order is as follows. At the bottom are $\K(M) \te 1$ corresponding to $M$ compact. As its unique immediate successor there is the class of $\K(M) \te \omega$ corresponding to $M$ which are locally compact, but not compact. Its unique successor is the class $\K(M) \te \omega^\omega$ corresponding to $M$ Polish but not locally compact. All other $\K(M)$ are strictly above $\omega^\omega$.
Fremlin further investigated how the Tukey order on $\K(\M)$ was related to the Borel and projective hierarchies. Recall that a separable metrizable space is \emph{absolutely Borel} if it is a Borel subset of some  metric compactification, \emph{analytic} (or, $\mathbf{\Sigma}^1_1$) if it is the continuous image of a Polish space; \emph{co-analytic} (or, $\mathbf{\Pi}^1_1$)  if the complement in a compact metric space of an analytic set; and $\mathbf{\Sigma}^1_2$ if the continuous image of a co-analytic set. 
Then it turns out that 
$\K(M) \te \K(\Q)$ whenever $M$ is co-analytic (in particular, when Borel) but not Polish, and if $\K(M) \le_T \K(\Q)$ then $M$  is $\mathbf{\Sigma}^1_2$. 
However (in ZFC) there is an analytic set $M$ such that $\K(\Q) <_T \K(M)$.  
Under Projective Determinacy, indeed, if $\K(M) \le_T \K(\Q)$ then $M$ is co-analytic, and  for \emph{every}  analytic non-Borel $M$ we have $\K(\Q) <_T \K(M)$. Fremlin also showed that the cofinality of $\K(\omega^\omega)$ and $\K(\Q)$ are $\mathfrak{d}$, the cofinality of $\omega^\omega$.

In \cite{KM} the authors explored the large scale structure of $\K(\M)$ under the Tukey order. We showed, among other things, that there is a $2^\ctm$-sized antichain in $\K(\M)$, and  $[0,1]^\omega$, with the co-ordinatewise order, order embeds in $\K(\M)$. Applications were given to function spaces and to certain compact spaces (Gul'ko compacta)  arising in functional analysis. (The authors have also, see \cite{KS}, examined the Tukey structure of $\K(S)$'s where $S$ is a subspace of $\omega_1$.)

Here our results come in two parts. In the first part (Section~\ref{Invs}) we investigate invariants of $\K(M)$ for certain `small' $M$ related to $\Q$. A separable metrizable space is \emph{totally imperfect} if  it contains no Cantor set, or equivalently every compact subset is countable. The rationals, $\Q$, are totally imperfect, as is any separable metrizable space of size strictly less than the continuum. The members of the $2^\ctm$-sized antichain of \cite{KM} are all totally imperfect. 
We give upper and lower Tukey bounds on $\K(M)$ for $M$ totally imperfect (Theorem~\ref{TI_bounds}). We deduce bounds on the cofinality of $\K(M)$'s, for $M$ totally imperfect; calculating the cofinality exactly for $M$ of size $< \aleph_\omega$; and give examples showing how Shelah's {\small PCF} theory intervenes at size $\aleph_\omega$ and higher. 
We then turn to calibres of $\K(M)$'s, for $M$ in $\M$. As is now well known, every such  $\K(M)$  has calibre $(\omega_1,\omega)$. For a fixed separable metric $M$  we concentrate on the problem of determining the cardinals $\kappa$ such that $\K(M)$ has calibre $\kappa$. It turns out to be more convenient to focus on the set -- the \emph{spectrum} of $\K(M)$ -- of all regular $\kappa$ which are \emph{not} calibres of $\K(M)$. Using the upper and lower Tukey bounds again, we show that for totally imperfect $M$ the problem of computing $\spec{\K(M)}$ rests on determining $\spec{\omega^\omega}$. And this is also strongly related to  {\small PCF} theory (see Theorem~\ref{specomom}, and prior discussion).   

In the second part of the paper (Section~\ref{Struct}) we investigate extensions of Christensen's theorem ($\omega^\omega \tq \K(M)$ implies $M$ Polish) to metrizable spaces of uncountable weight (the \emph{weight} of a space is the minimal size of a basis), and  continue Fremlin's examination of the initial structure of $\K(\M)$ under the Tukey order. Indeed in Section~\ref{compmet} we establish the outlines of the initial structure of $\K(M)$'s where $M$ is metrizable of some fixed uncountable weight $\kappa$. It follows that there is no possibility of a Christensen type theorem for uncountable weights: for no uncountable $\kappa$ is there a directed set $P_\kappa$ such that $P_\kappa \tq \K(M)$, where $M$ is metrizable of weight $\kappa$, forces $M$ to be completely metrizable. 
Next we give an internal description of those separable metrizable $M$ such that $\K(M) \le_T \K(\Q)$. The first author, Medini and Zdomskyy \cite{GaMedZdom} have used this to give an example, under $\mathbb{V}=\mathbb{L}$, of an analytic set $M$ such that $\omega^\omega \te \K(\omega^\omega) <_T \K(M) <_T \K(\Q)$. 
We also  determine those metrizable spaces $M$ such that $\K(\Q) \le_T \K(M)$,  answering a question of Fremlin. Finally we show that it is consistent that there is an analytic set $M$ such that $\K(M)$ is Tukey incomparable with $\K(\Q)$. The first author, Medini and Zdomskyy, building on our results here, have shown \cite{GaMedZdom} that it is also consistent that there are analytic, non-Borel, sets $M_1$ and $M_2$ such that $\K(M_1) <_T \K(\Q)$ and $\K(M_2) \te \K(\Q)$.   

\section{Tukey Preliminaries} 
The basic objects of study here are \emph{directed sets} -- partially ordered sets such that every finite subset has an upper bound. 
We write $\bigvee S$ for the least upper bound of a subset $S$ of a directed set $P$, if this exists. A directed set is \emph{Dedekind complete} if every bounded subset has a least upper bound.
Products of directed sets are given the (co-ordinatewise) product order. 
Products and powers, such as $\omega^\omega$, play an important role in our work. The mod-finite order, $\le^*$, on $\omega^\omega$  (so, $\sigma \le^* \tau$ if for some $N$ and all $n \ge N$ we have $\sigma (n) \le \tau (n)$) will also play a role.
But for us the most important class of directed sets are those of the form $\K(X)$, where $X$ is a topological space and $\K(X)$ is the family of compact subsets of $X$, ordered by set inclusion. Every $\K(X)$ is Dedekind complete.

A useful feature of $\K(X)$ is that carries a natural topology, the Vietoris topology, which interacts nicely with the order. A basic open set of $\K(X)$ in the Vietoris topology has the form $\langle U_1, \ldots , U_n\rangle = \{K \in \K(X) : K \subseteq \bigcup U_i \ \land \ K \cap U_i \ne \emptyset \text{ for } i =1, \ldots n\}$, where $U_1, \ldots, U_n$ are open subsets of $X$.
If $X$ is metrizable then the Hausdorff metric induces the Vietoris topology on $\K(X)$.

If $P'$ is a subset of a directed set $P$ then a subset $C$ of $P$ is \emph{cofinal} for $P'$ in $P$ provided for every $p'$ in $P'$ there is a $c$ in $C$ such that $c \ge p'$. (When $P'=P$ this corresponds to the usual definition of a cofinal set in $P$.) 
Define two invariants of a directed set by $\add{P}$ is the minimal size of an unbounded subset of $P$, and $\cof{P}$ is the minimal size of a cofinal set of $P$. 
Define, as usual, $\mathfrak{b}=\add{\omega^\omega, \le^*}$ (note, $\add{\omega^\omega}=\omega$) and $\mathfrak{d}=\cof{\omega^\omega}=\cof{\omega^\omega,\le^*}$.

We now introduce a generalization of the Tukey order on directed sets. Proofs of all claims can be found in \cite{KM}. 
Let $P'$ and $Q'$ be  subsets of directed sets $P$ and $Q$, respectively. A function $\phi : P \to Q$ is a (relative) \emph{Tukey quotient} if $\phi$ maps subsets of $P$ cofinal for $P'$ to subsets of $Q$ cofinal for $Q'$. We denote the existence of a relative Tukey quotient from $(P',P)$ to $(Q',Q)$ by $(P',P) \tq (Q',Q)$, and say \emph{$(P',P)$ Tukey quotients to $(Q',Q)$}. When $P'=P$ we abbreviate to $P \tq (Q',Q)$; and similarly for $Q'=Q$.

If $Q$ is Dedekind complete then $(P',P) \tq (Q',Q)$ if and only if there is a map $\phi : P \to Q$ such that $\phi$ is order preserving and $\phi (P)$ is cofinal for $Q'$ in $Q$. Since all directed sets mentioned here are Dedekind complete we almost always assume Tukey quotients are order preserving. 
There is also a dual version of Tukey quotients. We only use this in the absolute case (when $P'=P$ and $Q'=Q$) so we state it  in that generality. A function $\psi : Q \to P$ is a \emph{Tukey map} if $\psi$ maps unbounded sets in $Q$ to unbounded sets in $P$. Then there is a Tukey map from $Q$ to $P$ if and only if there is a Tukey quotient from $P$ to $Q$.
Naturally we write $Q \le_T P$ if $P \tq Q$, and $P \te Q$ -- $P$ and $Q$ are \emph{Tukey equivalent} -- if $P \tq Q$ and $P \le_T Q$. Clearly Tukey equivalence is an equivalence relation on the class of all directed sets, and $\le_T$ is a partial order on directed sets, up to Tukey equivalence.

We record for later use some basic Tukey properties of $\K(X)$. A map $f: X \to Y$ is \emph{compact-covering} if it is continuous and for every compact subset $L$ of $Y$ there is a compact subset $K$ of $X$ such that $f(K) \supseteq L$. 
\begin{lemma}\label{km_pres}
Let $X$  be a space. Then 
(1) if $A$ is a closed subset of $X$ then $\K(X) \tq \K(A)$ and (2) if $Y$ is the compact-covering image of $X$ then $\K(X) \tq \K(Y)$.

Also, (3) if $\{X_\lambda : \lambda \in \Lambda\}$ is a family of spaces then $\K(\prod_\lambda X_\lambda) \te \prod_\lambda \K(X_\lambda)$.
\end{lemma}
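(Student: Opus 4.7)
The plan is to exhibit explicit order-preserving maps with cofinal image in each case, which suffices since all the target directed sets are of the form $\K(Z)$ and hence Dedekind complete.

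For part (1), the natural candidate is $\phi : \K(X) \to \K(A)$ defined by $\phi(K) = K \cap A$. Closedness of $A$ guarantees $K \cap A$ is compact, so $\phi$ is well-defined, and it is obviously order-preserving. To check cofinality, given any $L \in \K(A)$, note $L$ itself belongs to $\K(X)$ and satisfies $\phi(L) = L \cap A = L$, so $L \in \phi(\K(X))$. For part (2), if $f : X \to Y$ is compact-covering, set $\phi(K) = f(K)$. Continuity ensures $\phi$ maps into $\K(Y)$, and the map is order-preserving. The compact-covering hypothesis is exactly the statement that every $L \in \K(Y)$ has some $K \in \K(X)$ with $f(K) \supseteq L$, which is the cofinality condition.

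For part (3) we must produce Tukey quotients in both directions. From $\K(\prod_\lambda X_\lambda)$ to $\prod_\lambda \K(X_\lambda)$, use projections: $\phi(K) = (\pi_\lambda(K))_\lambda$, which is order-preserving and lands in the product because each $\pi_\lambda$ is continuous. Given any $(L_\lambda) \in \prod_\lambda \K(X_\lambda)$, the box $\prod_\lambda L_\lambda$ is compact in the product by Tychonoff's theorem, and $\phi$ sends it to $(L_\lambda)$, establishing cofinality. In the reverse direction, define $\psi : \prod_\lambda \K(X_\lambda) \to \K(\prod_\lambda X_\lambda)$ by $\psi((K_\lambda)) = \prod_\lambda K_\lambda$; this is order-preserving and well-defined by Tychonoff. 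For cofinality, given $L \in \K(\prod_\lambda X_\lambda)$, the tuple $(\pi_\lambda(L))_\lambda$ maps under $\psi$ to the compact box $\prod_\lambda \pi_\lambda(L)$, which contains $L$.

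No step presents a serious obstacle; the only subtlety is invoking Tychonoff's theorem to ensure that products of compacta are compact in part (3), and noting that for part (1) it is closedness of $A$ (not openness or any stronger hypothesis) that makes $K \cap A$ compact inside $A$. Once these observations are in place, the verifications are immediate from the definition of a relative Tukey quotient together with Dedekind completeness of the codomains.
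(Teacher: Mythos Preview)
Your proof is correct and follows essentially the same approach as the paper: the paper defines exactly the maps $\phi_A(K)=K\cap A$, $\phi_f(K)=f[K]$, and for part~(3) the pair $\phi_1(K)=(\pi_\lambda(K))_\lambda$ and $\phi_2((K_\lambda)_\lambda)=\prod_\lambda K_\lambda$. You supply a bit more justification (Tychonoff, explicit cofinality checks) than the paper does, but the argument is the same.
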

\begin{proof}
For (1) define $\phi_A$ by $\phi_A (K) = K \cap A$.  For (2), let $f$ be a compact-covering map of $X$ to $Y$, and define $\phi_f (K) = f[K]$. In both cases the given map  is the desired Tukey quotient. 
For (3) define $\phi_1 : \K(\prod_\lambda X_\lambda) \to \prod_\lambda \K(X_\lambda)$ and $\phi_2  :   \prod_\lambda \K(X_\lambda) \to \K(\prod_\lambda X_\lambda)$ by $\phi_1 (K) = (\pi_\lambda (K))_\lambda$ and $\phi_2 ((K_\lambda)_\lambda) = \prod_\lambda K_\lambda$. These are the required Tukey quotients.
\end{proof}
Giving $\omega$ the discrete topology, and $\omega^\omega$ the product topology (so $\omega^\omega$ is homeomorphic to the irrationals), it follows from claim~(3) that $\K(\omega^\omega) \te \omega^\omega$. Another useful observation follows, for the proof see \cite{KM}.
\begin{lemma}\label{kkx} For every space $X$ we have $\K(X) \te \K( \K(X) )$.
\end{lemma}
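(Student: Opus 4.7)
The plan is to exhibit explicit order-preserving maps witnessing Tukey quotients in both directions. Since $\K(X)$ and $\K(\K(X))$ are both Dedekind complete, it suffices to produce, in each direction, an order-preserving map with cofinal image.

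For $\K(\K(X)) \tq \K(X)$, I would take the map $\phi : \K(\K(X)) \to \K(X)$ defined by $\phi(\F) = \bigcup \F$. This is well defined because the union of a Vietoris-compact family of compact subsets of $X$ is a compact subset of $X$ (a standard fact about the Vietoris topology, which can be seen from the observation that any open cover of $\bigcup \F$ in $X$ gives rise, via the basic open sets $\langle U_1, \ldots, U_n \rangle$, to an open cover of $\F$ in $\K(X)$). The map $\phi$ is clearly order-preserving. Its image is cofinal in $\K(X)$ since $\{K\} \in \K(\K(X))$ and $\phi(\{K\}) = K$ for every $K \in \K(X)$.

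For $\K(X) \tq \K(\K(X))$, I would take the map $\psi : \K(X) \to \K(\K(X))$ defined by $\psi(K) = \K(K)$. Here $\K(K)$ is a Vietoris-compact subset of $\K(X)$ because compactness of the hyperspace is inherited: if $K$ is compact then $\K(K)$, viewed as a subspace of $\K(X)$, is itself Vietoris-compact (again a classical fact). The map $\psi$ is order-preserving, since $K_1 \subseteq K_2$ implies $\K(K_1) \subseteq \K(K_2)$. To see the image is cofinal, take any $\F \in \K(\K(X))$ and let $K = \bigcup \F$; by the same compactness fact used in the first direction, $K \in \K(X)$, and every member of $\F$ is a compact subset of $K$, so $\F \subseteq \K(K) = \psi(K)$.

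The only substantive content is the two hyperspace facts about the Vietoris topology (compactness of $\K(K)$ for compact $K$, and compactness of $\bigcup \F$ for $\F \in \K(\K(X))$); neither is difficult, and both are standard, so there is no real obstacle. The proof is then a two-line verification that $\phi$ and $\psi$ have the required properties.
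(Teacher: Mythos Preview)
Your argument is correct and is the standard one: the two maps $\F \mapsto \bigcup \F$ and $K \mapsto \K(K)$ are order-preserving with cofinal image, using the classical Vietoris facts you cite. The paper itself does not prove this lemma but simply refers the reader to \cite{KM}; your proof is exactly the natural argument one would expect to find there.
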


A directed set $P$ is \emph{countably determined} if and only if every unbounded subset of $P$ contains a countable unbounded subset. Equivalently, $P$ is countably determined if whenever $S$ is a subset of $P$ all of whose countable subsets are bounded then $S$ is bounded (so the countable subsets of $P$ determine which subsets are bounded).
Call $P$ \emph{strongly countably determined} if it is countably determined and for every bounded subset $B$ of $P$ there is a countable subset $B_0$ such that the set of all upper bounds of $B$ and $B_0$ coincide. When $P$ is Dedekind complete the second condition is equivalent to saying that $\bigvee B = \bigvee B_0$.

\begin{lemma}
Every $\K(X)$, where $X$ is hereditarily separable, is strongly countably determined.
\end{lemma}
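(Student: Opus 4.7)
The plan is to handle both halves of \emph{strongly countably determined} with a single construction, using the fact that bounded subsets of $\K(X)$ admit a concrete description in terms of closures of unions. Specifically, for Hausdorff $X$ (which $\K(X)$ implicitly assumes) a family $\B \subseteq \K(X)$ is bounded if and only if $\cl{\bigcup \B}$ is compact, and in that case $\bigvee \B = \cl{\bigcup \B}$. This is immediate: any compact upper bound $K$ contains $\bigcup \B$ and is closed, hence contains $\cl{\bigcup \B}$, which is then itself the least compact upper bound.

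Given this, the proof will hinge on a single ``diagonalization'' against a countable dense set. For any $\mathcal{S} \subseteq \K(X)$, let $Y = \bigcup \mathcal{S}$, and use hereditary separability of $X$ to pick a countable dense $D \subseteq Y$. For each $d \in D$, choose $S_d \in \mathcal{S}$ with $d \in S_d$ and set $\mathcal{S}_0 = \{S_d : d \in D\}$. Then $D \subseteq \bigcup \mathcal{S}_0 \subseteq Y$, so
\[ \cl{\bigcup \mathcal{S}_0} \supseteq \cl{D} \supseteq \cl{Y} \supseteq \cl{\bigcup \mathcal{S}_0},\]
and all three closures coincide.

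Now both clauses follow. For countable determination: if $\mathcal{S}$ is unbounded then $\cl{\bigcup \mathcal{S}}$ is not compact, hence neither is $\cl{\bigcup \mathcal{S}_0}$, so the countable subfamily $\mathcal{S}_0 \subseteq \mathcal{S}$ is unbounded. For the ``strong'' clause: applying the same construction to a bounded $\B$ gives a countable $\B_0 \subseteq \B$ with $\bigvee \B_0 = \cl{\bigcup \B_0} = \cl{\bigcup \B} = \bigvee \B$, so $\B$ and $\B_0$ have the same upper bounds.

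There is no real obstacle here; the only point that needs a moment of care is the characterization of boundedness in $\K(X)$ via closures of unions, and hereditary separability is exactly the ingredient that lets one countable dense choice witness both conditions at once.
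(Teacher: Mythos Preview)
Your proof is correct and follows essentially the same route as the paper: both arguments pick a countable dense subset $D$ of $\bigcup \mathcal{S}$, select one member of $\mathcal{S}$ through each point of $D$, and use the resulting equality of closures to handle countable determination and the strong clause simultaneously. The only difference is presentational---you state the construction once and apply it to the unbounded and bounded cases separately, while the paper starts from ``all countable subsets bounded'' and derives both conclusions in one pass.
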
 
\begin{proof}
Let $\K$ be a subset of $\K(X)$ such that all its countable subsets are bounded. Set $A=\bigcup \K$. Then $\K$ is bounded if  $\cl{A}$ is compact, and in this case $\cl{A}$ is the least upper bound, $\bigvee \K$, of $\K$. As $X$ is hereditarily separable, there is a countable subset $D$ of $A$ such that $\cl{D}=\cl{A}$. For each $d$ from $D$ pick a $K_d$ in $\K$ such that $d \in K_d$. Set $\K_0=\{K_d : d \in D\}$. Then $\K_0$ is a countable subset of $\K$. By assumption, it is bounded in $\K(M)$. Since  $\cl{\bigcup \K_0} = \cl{A} = \cl{\bigcup \K}$, and the first set is compact, we see that $\K$ is bounded and $\bigvee \K = \bigvee \K_0$.
\end{proof}

\begin{lemma}\label{op_bdd}
Let $P$ and $Q$ be directed sets, where $\add{P} > \omega$, and let $\phi : P \to Q$ be an order preserving map. If $Q$ is countably determined then $\phi$ has bounded image. If $Q$ is strongly countably determined then
 there is a $p_\infty$ in $P$ such that $\phi(p_\infty)$ is an upper bound of $\phi(P)$. 
\end{lemma}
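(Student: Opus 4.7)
The plan is to prove both statements by a direct contradiction/construction argument, exploiting the $\omega_1$-directedness of $P$ (which is the meaning of $\add{P} > \omega$: every countable subset of $P$ is bounded).

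For the first statement, I would argue by contradiction. Suppose $\phi(P)$ is unbounded in $Q$. Since $Q$ is countably determined, the unbounded set $\phi(P)$ contains a countable unbounded subset, say $\{\phi(p_n) : n \in \omega\}$ for some sequence $(p_n)_n$ in $P$. The countable set $\{p_n : n \in \omega\}$ is then bounded in $P$ by some $p \in P$, since $\add{P} > \omega$. Because $\phi$ is order preserving, $\phi(p_n) \le \phi(p)$ for every $n$, so $\{\phi(p_n) : n \in \omega\}$ is bounded, contradicting its choice.

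For the second statement, the first part already gives that $\phi(P)$ is bounded in $Q$. Now apply the definition of strongly countably determined to the bounded set $B = \phi(P)$: there is a countable subset $B_0 \subseteq \phi(P)$ whose upper bounds in $Q$ are exactly the upper bounds of $\phi(P)$. Write $B_0 = \{\phi(p_n) : n \in \omega\}$ for some $p_n \in P$. Since $\add{P} > \omega$, the countable set $\{p_n : n \in \omega\}$ has an upper bound $p_\infty$ in $P$. Order preservation of $\phi$ gives $\phi(p_\infty) \ge \phi(p_n)$ for every $n$, so $\phi(p_\infty)$ is an upper bound of $B_0$, and hence by the choice of $B_0$ it is an upper bound of all of $\phi(P)$, as required.

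I do not anticipate any real obstacle here: the lemma is essentially the bookkeeping that lets one transfer $\omega_1$-completeness from $P$ to images in countably determined $Q$'s, and the hypotheses were set up precisely to make these two paragraphs work. The only thing to be careful about is noting that in the second part one can invoke strong countable determination with $B = \phi(P)$ itself (using the conclusion of the first part to guarantee $\phi(P)$ is bounded) rather than needing to work with its least upper bound.
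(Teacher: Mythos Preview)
Your proof is correct and follows essentially the same approach as the paper: the paper shows directly that every countable subset of $\phi(P)$ is bounded (hence $\phi(P)$ is bounded by countable determination), while you phrase this contrapositively, and your second paragraph matches the paper's almost verbatim.
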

\begin{proof}
Let $S=\phi(P)$. Assume $Q$ is countably determined. To show that $\phi$ has bounded image it suffices to establish that every countable subset, say $S_0$, of $S$, has an upper bound. Write $S_0=\{\phi (p) : p \in P_0\}$ where $P_0$ is a countable subset of $P$. Then as $P$ is countably additive, $P_0$ has an upper bound $p_\infty$, and, as $\phi$ is order preserving, $\phi(p_\infty)$ is an upper bound of $S_0$.

If $Q$ is strongly countably determined then there is some countable $S_0$ contained in $S$ such that every upper bound of $S_0$ is an upper bound of $S$. Then the $p_\infty$ found above is such that $\phi(p_\infty)$ is an upper bound of $S_0$, and $S$.
\end{proof}

\begin{prop}\label{PQR_to_QR}
Let $P$, $Q$ and $R$ be directed sets where $\add{P} > \omega$ and $R$ is Dedekind complete and strongly countably determined. If $P \times Q \tr R$ then $Q \tr R$.
\end{prop}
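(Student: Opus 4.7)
The plan is to use the hypothesis $\add{P}>\omega$ together with the fact that $R$ is strongly countably determined to ``collapse'' the $P$-coordinate of the given Tukey quotient, producing a Tukey quotient from $Q$ to $R$.

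Fix a Tukey quotient $\phi:P\times Q \to R$ witnessing $P\times Q \tq R$. Since $R$ is Dedekind complete, I may assume $\phi$ is order preserving. For each $q\in Q$, consider the section map $\phi_q: P\to R$ given by $\phi_q(p)=\phi(p,q)$. This is order preserving, and $R$ is Dedekind complete and strongly countably determined, so Lemma~\ref{op_bdd} applies: there exists $p_\infty(q)\in P$ such that $\phi(p_\infty(q),q)$ is an upper bound of $\phi_q(P)=\{\phi(p,q):p\in P\}$. Using the axiom of choice, fix such a selector $q\mapsto p_\infty(q)$ and define
\[
\psi : Q \to R, \qquad \psi(q) = \phi\bigl(p_\infty(q),\, q\bigr).
\]

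I claim $\psi$ is a Tukey quotient from $Q$ to $R$, so that $Q \tq R$. Let $C$ be cofinal in $Q$; I must show $\psi(C)$ is cofinal in $R$. Observe first that $P\times C$ is cofinal in $P\times Q$: given $(p,q)\in P\times Q$, choose $c\in C$ with $c\ge q$, so $(p,c)\ge (p,q)$ and $(p,c)\in P\times C$. Because $\phi$ is a Tukey quotient, $\phi(P\times C)$ is cofinal in $R$. So, given any $r\in R$, there is $(p,c)\in P\times C$ with $\phi(p,c)\ge r$. By the defining property of $p_\infty(c)$, we have $\psi(c)=\phi(p_\infty(c),c)\ge \phi(p,c)\ge r$, and $c\in C$, as required.

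Note that $\psi$ itself need not be order preserving; this is not a problem, since the definition of Tukey quotient only demands that cofinal sets map to cofinal sets. The only real step is the invocation of Lemma~\ref{op_bdd} to produce $p_\infty(q)$ for each $q$, so the main (but not severe) subtlety is simply checking that its hypotheses are verified section by section: $\add{P}>\omega$ is given, and the strong countable determination and Dedekind completeness of $R$ are hypotheses of the proposition.
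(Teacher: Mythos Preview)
Your proof is correct and essentially identical to the paper's: both take an order-preserving Tukey quotient $\phi:P\times Q\to R$, apply Lemma~\ref{op_bdd} sectionwise to each $\phi_q$ to obtain a point $p_\infty(q)$ with $\phi(p_\infty(q),q)$ bounding $\phi_q(P)$, set $\psi(q)=\phi(p_\infty(q),q)$, and verify that $\psi$ carries cofinal sets to cofinal sets. Your remark that $\psi$ need not be order preserving is a valid clarification that the paper omits.
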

\begin{proof}
Fix $\phi : P \times Q \to R$ which is order  preserving and cofinal.
For each $q$ in $Q$, let $\phi_q : P \to R$ be $\phi_q(p) = \phi(p,q)$. By Lemma~\ref{op_bdd}, $\phi_q$ we can pick $p_q$ in $P$ such that $\phi_q(p_q)\ge \bigvee \phi_q(P)$ (indeed, $\phi_q(p_q)$ must equal $\bigvee \phi_q(P)$). 
Define $\hat{\phi} : Q \to R$ by $\hat{\phi}(q) = \phi(p_q,q) = \phi_q(p_q)$.
Let $C$ be a cofinal set in $Q$.
We verify that $\hat{\phi}(C)$ is cofinal in $R$. Take any $r$ in $R$. For some $(p,q)$ we have $\phi(p,q) \ge r$. Since $C$ is cofinal, and $\phi$ is order preserving, we can assume $q$ is in $C$. Then $\hat{\phi}(q)=\phi_q(p_q) \ge \bigvee \phi_q(P) \ge \phi_q(p) = \phi(p,q) \ge r$.
\end{proof}

\begin{lemma}\label{combine} Let $P$ and $Q$ be directed sets, with $P$  Dedekind complete. Suppose $P = \bigcup_{\alpha \in \kappa} P_\alpha$ and for each $\alpha$ we have $Q \tr (P_\alpha ,P)$.
Then $Q \times [\kappa]^{<\omega} \tr P$.
\end{lemma}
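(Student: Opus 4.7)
My plan is to combine the individual relative Tukey quotients into a single absolute Tukey quotient by taking finite joins indexed by elements of $[\kappa]^{<\omega}$, exploiting the Dedekind completeness of $P$.

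For each $\alpha \in \kappa$, invoke the characterization of (relative) Tukey quotients into Dedekind complete codomains to pick an order preserving map $\phi_\alpha : Q \to P$ such that $\phi_\alpha(Q)$ is cofinal for $P_\alpha$ in $P$, i.e., for every $p \in P_\alpha$ there is some $q \in Q$ with $\phi_\alpha(q) \ge p$. Then define $\Phi : Q \times [\kappa]^{<\omega} \to P$ by
\[
  \Phi(q,F) \;=\; \bigvee_{\alpha \in F \cup \{\alpha_0\}} \phi_\alpha(q),
\]
where $\alpha_0$ is a fixed element of $\kappa$ (WLOG $\kappa \ge 1$, else $P = \emptyset$). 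The join exists because $F \cup \{\alpha_0\}$ is finite and $P$ is Dedekind complete.

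Next I would verify the two required properties of $\Phi$. Order preservation: if $(q,F) \le (q',F')$, then $F \cup \{\alpha_0\} \subseteq F' \cup \{\alpha_0\}$ and $\phi_\alpha(q) \le \phi_\alpha(q')$ for each $\alpha$, so $\Phi(q,F) \le \Phi(q',F')$. Cofinality of the image: given $p \in P$, choose $\alpha$ with $p \in P_\alpha$ and then $q \in Q$ with $\phi_\alpha(q) \ge p$; then $\Phi(q,\{\alpha\}) \ge \phi_\alpha(q) \ge p$.

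Finally, since $\Phi$ is order preserving with cofinal image and $P$ is Dedekind complete, $\Phi$ is a Tukey quotient: for any cofinal $C \subseteq Q \times [\kappa]^{<\omega}$ and any $p \in P$, first find $(q,F)$ with $\Phi(q,F) \ge p$, then $c \in C$ with $c \ge (q,F)$, so $\Phi(c) \ge \Phi(q,F) \ge p$. There is no real obstacle here beyond a small bookkeeping nuisance at $F = \emptyset$, which the padding by $\alpha_0$ handles; the substance of the argument is that finite joins in a Dedekind complete $P$ let us amalgamate the witnesses $\phi_\alpha$ into one map whose domain has the extra $[\kappa]^{<\omega}$ factor just large enough to reach every $P_\alpha$.
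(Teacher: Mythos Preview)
Your proof is correct and follows essentially the same approach as the paper: pick order-preserving $\phi_\alpha : Q \to P$ with image cofinal for $P_\alpha$, define the quotient by taking the finite join $\bigvee_{\alpha \in F} \phi_\alpha(q)$, and verify order preservation and cofinality. The only difference is that you pad $F$ by a fixed $\alpha_0$ to handle $F=\emptyset$, while the paper simply writes $\phi(q,F)=\bigvee_{\alpha\in F}\phi_\alpha(q)$ and tacitly relies on the empty join existing (which is harmless in their applications, since $\K(X)$ has $\emptyset$ as a least element).
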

\begin{proof}
As $P$ is Dedekind complete, fix an order preserving $\phi_\alpha : Q \to P$ such that $\phi_\alpha (Q)$ is cofinal for $P_\alpha$ in $P$. Define $\phi : Q \times [\kappa]^{<\omega} \to P$ by $\phi(q,F) = \bigvee_{\alpha \in F} \phi_\alpha (q)$.
Clearly $\phi$ is  order preserving. If $p$ is any element of $P$, then $p$ is in $P_\alpha$, for some $\alpha$. Pick $q$ from $Q$ such that $\phi_\alpha (q) \ge p$. Then $\phi(q,\{\alpha\}) = \phi_\alpha (q) \ge p$, and thus $\phi$ is cofinal.
\end{proof}

\begin{lemma}\label{pow_general}
Suppose $\kappa < \mathop{add}(Q)$ and $(Q,P) \tr (Q_\alpha, P_\alpha)$ for each $\alpha  \in \kappa$. Also assume each $P_\alpha$ is Dedekind complete. Then $(Q,P) \tr (\prod_{\alpha \in \kappa}Q_\alpha, \prod_{\alpha \in \kappa}P_\alpha)$.
\end{lemma}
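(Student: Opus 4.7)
The natural candidate to witness $(Q,P) \tr \left(\prod_\alpha Q_\alpha,\prod_\alpha P_\alpha\right)$ is the diagonal map $\Phi : P \to \prod_\alpha P_\alpha$ given by $\Phi(p) = (\phi_\alpha(p))_\alpha$, where each $\phi_\alpha : P \to P_\alpha$ witnesses $(Q,P) \tr (Q_\alpha, P_\alpha)$. Because each $P_\alpha$ is Dedekind complete, the discussion in the Tukey preliminaries lets us take $\phi_\alpha$ to be order preserving with $\phi_\alpha(Q)$ cofinal for $Q_\alpha$ in $P_\alpha$. Then $\prod_\alpha P_\alpha$ is Dedekind complete and $\Phi$ is order preserving, so it suffices to show that $\Phi(Q)$ is cofinal for $\prod_\alpha Q_\alpha$ in $\prod_\alpha P_\alpha$.

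Fix $(q_\alpha)_\alpha \in \prod_\alpha Q_\alpha$. For each $\alpha \in \kappa$, use the cofinality of $\phi_\alpha(Q)$ for $Q_\alpha$ to pick $q^{(\alpha)} \in Q$ with $\phi_\alpha(q^{(\alpha)}) \ge q_\alpha$. The family $\{q^{(\alpha)} : \alpha \in \kappa\}$ is a subset of $Q$ of cardinality at most $\kappa < \add{Q}$, hence bounded in $Q$; let $q \in Q$ be an upper bound. Order preservation of each $\phi_\alpha$ then gives $\phi_\alpha(q) \ge \phi_\alpha(q^{(\alpha)}) \ge q_\alpha$ for every $\alpha$, so $\Phi(q) \ge (q_\alpha)_\alpha$ in $\prod_\alpha P_\alpha$. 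Since $q \in Q$, the element $\Phi(q)$ lies in $\Phi(Q)$, as required.

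The one step of substance is the amalgamation of the $\kappa$-many witnesses $q^{(\alpha)}$ into a single $q \in Q$, which is exactly what the hypothesis $\kappa < \add{Q}$ provides; without it, we would have no way of producing one element of $Q$ dominating, under all the $\phi_\alpha$, the coordinates of a given element of $\prod_\alpha Q_\alpha$. Everything else is bookkeeping: Dedekind completeness of the $P_\alpha$'s enters only to replace each relative Tukey quotient by an order-preserving map with the convenient cofinality property, which in turn makes the coordinatewise $\Phi$ automatically order preserving into $\prod_\alpha P_\alpha$.
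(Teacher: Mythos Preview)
Your proof is correct and essentially identical to the paper's: both choose order-preserving witnesses $\phi_\alpha$ (using Dedekind completeness of the $P_\alpha$), form the diagonal map into $\prod_\alpha P_\alpha$, and use $\kappa < \add{Q}$ to bound the $\kappa$-many preimages $q^{(\alpha)}\in Q$ by a single $q\in Q$, giving $\Phi(q)\ge (q_\alpha)_\alpha$. Your closing paragraph isolating where each hypothesis is used is a helpful addition but does not change the argument.
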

\begin{proof}
For each $\alpha  \in \kappa$, let $\phi_\alpha$ be an order preserving relative Tukey quotient witnessing $(Q,P) \tr (Q_\alpha, P_\alpha)$. Define $\phi(x) = \mathbf{x}$ where $\mathbf{x}(\alpha)=\phi_\alpha (x)$ for all $\alpha < \kappa$. Evidently $\phi$ is an order preserving map from $P$ to $\prod_{\alpha \in \kappa}P_\alpha$. Take any $(x_\alpha)_{\alpha < \kappa}$ in $\prod_{\alpha \in \kappa}Q_\alpha$. For $\alpha \in \kappa$, $\phi_\alpha (Q)$ is cofinal in $Q_\alpha$ and we can pick $y_\alpha \in Q$ such that $\phi_\alpha(y_\alpha) \geq x_\alpha$. Then $\{y_\alpha : \alpha < \kappa\}$ has an upper bound in $Q$, say $y$. Now we see that $\phi(y) \ge (x_\alpha)_{\alpha < \kappa}$, and thus $\phi(Q)$ is cofinal for $\prod_{\alpha \in \kappa}Q_\alpha$. So, $\phi$ is a relative Tukey quotient and $(Q,P) \tr (\prod_{\alpha \in \kappa}Q_\alpha, \prod_{\alpha \in \kappa}P_\alpha)$. 
\end{proof}

The following is well known.
\begin{lemma}\label{k}
For any cardinal $\kappa$, $[\kappa]^{<\omega} \tr P$ if and only if $\cof{P} \le \kappa$. 
\end{lemma}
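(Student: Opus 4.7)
The plan is to prove both directions straight from the definitions, exploiting two features of $[\kappa]^{<\omega}$: its cardinality (at most $\kappa$ for infinite $\kappa$) and the fact that its cofinal subsets must absorb every singleton $\{\alpha\}$, $\alpha < \kappa$.

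For the forward direction, suppose $\phi:[\kappa]^{<\omega} \to P$ is a Tukey quotient. Since $[\kappa]^{<\omega}$ is cofinal in itself, $\phi\bigl([\kappa]^{<\omega}\bigr)$ is a cofinal subset of $P$ of cardinality at most $|[\kappa]^{<\omega}|$, which is $\kappa$ when $\kappa$ is infinite (and finite when $\kappa$ is finite, in which case $[\kappa]^{<\omega}$ even has a maximum and so does its image). Hence $\cof{P} \le \kappa$.

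For the reverse direction, suppose $\cof{P} \le \kappa$ and fix a cofinal set $\{p_\alpha : \alpha < \kappa\}$ in $P$. For each $F \in [\kappa]^{<\omega}$ use the directedness of $P$ to choose $\phi(F) \in P$ with $\phi(F) \ge p_\alpha$ for every $\alpha \in F$. To check $\phi$ is a Tukey quotient, let $C$ be cofinal in $[\kappa]^{<\omega}$ and let $p \in P$. Pick $\alpha < \kappa$ with $p_\alpha \ge p$, then pick $F \in C$ with $F \supseteq \{\alpha\}$; by construction $\phi(F) \ge p_\alpha \ge p$. Thus $\phi(C)$ is cofinal in $P$.

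No serious obstacle arises: the lemma is a direct reformulation of "cofinality $\le \kappa$" in Tukey terms, with $[\kappa]^{<\omega}$ playing its usual role as the canonical directed set of size $\kappa$ whose cofinal sets are exactly the families of finite sets covering $\kappa$. If one prefers the order-preserving convention for Tukey quotients (valid here by the Dedekind completeness discussion preceding the lemma when $P$ is such), one can instead define $\phi(F)$ to be $\bigvee_{\alpha \in F} p_\alpha$; the argument is unchanged.
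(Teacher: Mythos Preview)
Your proof is correct and is precisely the standard argument; the paper does not actually supply a proof here, merely stating that the lemma is well known. There is therefore nothing to compare against, and your write-up would serve perfectly well as the omitted proof.
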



\begin{lemma}\label{om}
Let $(\kappa_n)_n$ be a non decreasing sequence of cardinals with limit $\kappa$. Let $A$ be an infinite subset of $\mathbb{N}$. Then $\prod_{n \in A} [\kappa_n]^{<\omega} \te ([\kappa]^{<\omega})^\omega$.
\end{lemma}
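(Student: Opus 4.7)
Write $P=\prod_{n\in A}[\kappa_n]^{<\omega}$ and $Q=([\kappa]^{<\omega})^\omega$. After discarding those $n\in A$ with $\kappa_n$ finite (a Tukey-trivial modification, since their contribution is a bounded directed set) we may assume $\kappa_n\ge\omega$ for every $n\in A$; enumerate $A=\{a_0<a_1<\cdots\}$. The plan is to exhibit order-preserving Tukey quotients in each direction.

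\textbf{Easy direction, $P\le_T Q$.} Define $\psi:Q\to P$ by $\psi((H_k)_k)_{a_m}=H_m\cap\kappa_{a_m}$. It is clearly order preserving, and cofinal because for any target $(G_{a_m})_m\in P$ setting $H_m:=G_{a_m}\in[\kappa]^{<\omega}$ (valid as $G_{a_m}\subseteq\kappa_{a_m}\subseteq\kappa$) gives $\psi((H_k)_k)=(G_{a_m})_m$. Hence $Q\tq P$.

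\textbf{Hard direction, $Q\le_T P$.} Partition $A=\bigsqcup_{k\in\omega}B_k$ into infinite pieces and enumerate $B_k=\{b_{k,0}<b_{k,1}<\cdots\}$, so that $\sup_j\kappa_{b_{k,j}}=\kappa$ for each $k$. Define $\phi:P\to Q$ by
\[
\phi((F_n)_{n\in A})_k=\bigcup_{j\le|F_{b_{k,0}}|}F_{b_{k,j}},
\]
a finite union of finite subsets of $\kappa$. Order preservation is immediate: if $F\le F'$ then $|F_{b_{k,0}}|\le|F'_{b_{k,0}}|$ and $F_{b_{k,j}}\subseteq F'_{b_{k,j}}$ for every $j$. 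For cofinality, given $(H_k)_k\in Q$ choose for each $k$ an index $j_k$ with $H_k\subseteq\kappa_{b_{k,j_k}}$ (possible since $H_k$ is finite and the $\kappa_{b_{k,j}}$ cofinally exhaust $\kappa$), then set $F_{b_{k,j_k}}:=H_k$ and enlarge $F_{b_{k,0}}$ to any finite subset of $\kappa_{b_{k,0}}$ of size $\ge j_k$ (using $\kappa_{b_{k,0}}\ge\omega$), all other coordinates empty. Disjointness of the $B_k$ prevents any clash in these assignments, and $\phi(F)_k\supseteq F_{b_{k,j_k}}=H_k$ for every $k$. So $P\tq Q$, and combining with the easy direction gives $P\te Q$.

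\textbf{Expected obstacle.} All the work is in the hard direction. A target $(H_k)_k\in Q$ can have $\max H_k$ climbing toward $\kappa$ in $k$, while every coordinate $F_n$ of $P$ is trapped inside $[\kappa_n]^{<\omega}$ with $\kappa_n<\kappa$; no fixed coordinate projection or fixed finite-union map $P\to Q$ can be cofinal. The device resolving this is to use $|F_{b_{k,0}}|$ as a monotone $\N$-valued counter selecting how deep along the cofinal-in-$\kappa$ subsequence $(F_{b_{k,j}})_j$ the map $\phi$ reads: monotonicity of cardinality under inclusion is exactly what keeps the adaptive map order preserving, while the unboundedness of this counter under cofinal sets in $P$ is what lets $\phi$ reach every finite subset of $\kappa$ in each coordinate of $Q$.
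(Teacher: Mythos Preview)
Your proof is correct and follows essentially the same strategy as the paper: split $A$ into infinitely many infinite pieces and, on each piece, use the size of the first coordinate as a monotone counter to control how many later coordinates are unioned. The only cosmetic difference is that the paper first reduces to showing $\prod_{n\in A}[\kappa_n]^{<\omega}\tq[\kappa]^{<\omega}$ for a single factor and then invokes the splitting, whereas you carry out the same construction for all $\omega$ output coordinates in parallel.
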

\begin{proof} Clearly $([\kappa]^{<\omega})^\omega \tr \prod_{n \in A} [\kappa_n]^{<\omega}$. For the converse, 
since we can split the given $A$ into an infinite family of infinite subsets it suffices to show that  $\prod_{n \in A} [\kappa_n]^{<\omega} \tr [\kappa]^{<\omega}$.

Let $n_1=\min A$. Define  $\phi: \prod_{n \in A} [\kappa_n]^{<\omega} \to [\kappa]^{<\omega}$ by $\phi( (F_n)_n) = \bigcup \{ F_i : i \le |F_{n_1}|\}$. Clearly $\phi$ is well-defined and order preserving. Take any finite subset $F$ of $\kappa$. Then $F \subseteq \kappa_m$ for some $m>n_1$ in $A$. Pick $F_1 \subseteq \kappa_{n_1}$ of size $m$. Set $F_m = F$. And for $n \in A \setminus \{n_1,m\}$ set $F_n=\emptyset$. Then $(F_n)_n$ is in $\prod_{n \in A} [\kappa_n]^{<\omega}$ and $\phi((F_n)_n) = F_{n_1} \cup F_m \supseteq F$, as required for $\phi$  to have cofinal image.
\end{proof}

\begin{prop}\label{aleph_n} (1) For $n$ in $\omega$, we have $\Pk{\aleph_n} =_T \omega^\omega \times [\omega_n]^{<\omega}$, but (2) $\Pk{\aleph_\omega} >_T \omega^\omega \times [\aleph_\omega]^{<\omega}$.
\end{prop}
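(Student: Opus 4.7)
Both parts share the straightforward direction: the map $\Phi((F_k)_k) = ((|F_k|)_k, F_0)$ is order preserving from $\Pk{\aleph_\lambda}$ to $\omega^\omega \times [\aleph_\lambda]^{<\omega}$, and given $(\tau, G)$ one exhibits a preimage by taking $F_0 \supseteq G$ with $|F_0| \ge \tau(0)$ and any $F_k$ of size $\ge \tau(k)$ for $k \ge 1$; so $\Pk{\aleph_\lambda} \tr \omega^\omega \times [\aleph_\lambda]^{<\omega}$ in both cases.

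For part (1) the plan is to prove the reverse by induction on $n$. The base $n = 0$ is immediate: Lemma~\ref{om} with the constant sequence $\omega$ gives $\Pk{\omega} \te \omega^\omega$, and $[\omega]^{<\omega} \te \omega$ yields $\omega^\omega \times [\omega]^{<\omega} \te \omega^\omega$. For the inductive step, the key point is that $\mathrm{cf}(\omega_{n+1}) > \omega$, so every $(F_k)_k \in \Pk{\omega_{n+1}}$ has $\bigcup_k F_k$ bounded in $\omega_{n+1}$, giving the decomposition $\Pk{\omega_{n+1}} = \bigcup_{\alpha < \omega_{n+1}}([\alpha]^{<\omega})^\omega$. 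For each such $\alpha$ one has $|\alpha| \le \omega_n$ and $([\alpha]^{<\omega})^\omega \te \Pk{|\alpha|} \le_T \omega^\omega \times [\omega_n]^{<\omega}$ by the inductive hypothesis; composing with the inclusion $([\alpha]^{<\omega})^\omega \hookrightarrow \Pk{\omega_{n+1}}$ gives $\omega^\omega \times [\omega_n]^{<\omega} \tr (([\alpha]^{<\omega})^\omega, \Pk{\omega_{n+1}})$. Lemma~\ref{combine} applied with $Q = \omega^\omega \times [\omega_n]^{<\omega}$ and $\kappa = \omega_{n+1}$ then produces $(\omega^\omega \times [\omega_n]^{<\omega}) \times [\omega_{n+1}]^{<\omega} \tr \Pk{\omega_{n+1}}$, and since $[\omega_n]^{<\omega} \le_T [\omega_{n+1}]^{<\omega}$ the two bracket factors collapse to $[\omega_{n+1}]^{<\omega}$, closing the induction.

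For part (2) the decomposition above breaks because $\mathrm{cf}(\aleph_\omega) = \omega$ and $\bigcup_k F_k$ can be cofinal in $\aleph_\omega$. Strictness will be proved via a pcf-theoretic cofinality comparison. Lemma~\ref{om} gives $\Pk{\aleph_\omega} \te \prod_n [\aleph_n]^{<\omega}$, and the coordinate-wise projection $F \mapsto \max F + 1$ produces $\Pk{\aleph_\omega} \tr \prod_n \omega_n$. Shelah's pcf theorem provides the ZFC inequality $\cof{\prod_n \omega_n} \ge \aleph_{\omega+1}$, so $\cof{\Pk{\aleph_\omega}} \ge \aleph_{\omega+1}$. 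On the other hand $\cof{\omega^\omega \times [\aleph_\omega]^{<\omega}} = \max(\mathfrak{d}, \aleph_\omega)$, and since $\mathfrak{d}$ has uncountable cofinality, $\mathfrak{d} \ne \aleph_\omega$; in the case $\mathfrak{d} < \aleph_\omega$ the right side is $\aleph_\omega < \aleph_{\omega+1}$ and strictness is immediate.

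The main obstacle is the case $\mathfrak{d} \ge \aleph_{\omega+1}$, where the bare cofinality count no longer separates the two Tukey classes. Here the plan would be to push through the cover $(F_k)_k \mapsto \bigcup_k F_k : \Pk{\aleph_\omega} \tr [\aleph_\omega]^\omega$, extracting from a hypothetical relative Tukey quotient an order-preserving cofinal map $\omega^\omega \times [\aleph_\omega]^{<\omega} \to [\aleph_\omega]^\omega$ whose image has cardinality at most $\max(\ctm, \aleph_\omega)$, and then using sharper pcf bounds on $\cof([\aleph_\omega]^\omega)$ to force the contradiction; this is the subtle step where Shelah's finer pcf inequalities have to be invoked rather than just $\mathrm{pp}(\aleph_\omega) \ge \aleph_{\omega+1}$.
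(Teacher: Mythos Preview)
Your argument for part~(1) is correct and is essentially the paper's proof: induction on $n$, writing $\Pk{\omega_{n+1}}$ as the union over $\alpha<\omega_{n+1}$ of $([\alpha]^{<\omega})^\omega$, invoking the inductive hypothesis on each piece, and applying Lemma~\ref{combine}.

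Part~(2), however, has a genuine gap. Your cofinality comparison only succeeds when $\mathfrak{d}<\aleph_\omega$, and you acknowledge that the case $\mathfrak{d}\ge\aleph_{\omega+1}$ is the real obstacle. The fallback plan you sketch cannot close this gap: the domain $\omega^\omega\times[\aleph_\omega]^{<\omega}$ already has size $\max(\ctm,\aleph_\omega)$, so bounding the image by that cardinal is tautological, and once $\ctm\ge\mathfrak{d}\ge\aleph_{\omega+1}$ no known pcf upper bound on $\cof{[\aleph_\omega]^{\le\omega}}$ (which is only $<\aleph_{\omega_4}$) will produce a contradiction. Cardinality and cofinality invariants simply do not separate the two posets in ZFC.

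The paper's proof avoids pcf entirely and works uniformly. Given a hypothetical order-preserving cofinal $\phi:\omega^\omega\times[\aleph_\omega]^{<\omega}\to\Pk{\aleph_\omega}$, one uses the calibre~$(\omega_1,\omega)$ of $\omega^\omega$ to show that for each fixed $F$ the set $C_F=\bigcup_n\pi_n\bigl(\phi(\omega^\omega\times\{F\})\bigr)$ is \emph{countable}; hence the whole image lies in $\bigcup_F C_F^\omega$. One then diagonalizes: for each $n$, the union $S_n$ of all $C_F$ with $F\subseteq\omega_n$ has size at most $\aleph_n$, so choose $F_n\in[\aleph_\omega]^{<\omega}$ disjoint from $S_n$; the sequence $(F_n)_n$ lies below no element of the image. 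The missing idea in your attempt is this structural use of calibre~$(\omega_1,\omega)$ to confine each $\omega^\omega$-slice to a countable support, which is what makes the diagonalization possible independently of the size of $\mathfrak{d}$.
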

\begin{proof}
Clearly for any infinite $\kappa$, we have $\Pk{\kappa} \te \Pk{\kappa} \times [\kappa]^{<\omega} \tr \omom \times [\kappa]^{<\omega}$. 

To complete the proof of (1) we need to verify $\Pk{\aleph_n} \le_T \omega^\omega \times [\omega_n]^{<\omega}$. We use induction on $n$.
When $n=0$ the claim is clear. So assume $\Pk{\aleph_{n-1}} =_T \omom \times [\omega_{n-1}]^{<\omega}$, for some $n \ge 1$.
Then,  $\Pk{\aleph_n} = \bigcup_{\omega_{n-1} \le \alpha < \omega_n} \Pk{[0,\alpha]}  =_T \omom \times [\omega_{n-1}]^{<\omega} \times [\omega_n]^{<\omega} =_T \omom \times [\omega_n]^{<\omega}$, using the inductive hypothesis and Lemma~\ref{combine}.

To complete the proof of (2) we need to verify $\Pk{\aleph_\omega} \not\le_T \omega^\omega \times [\omega_\omega]^{<\omega}$. Suppose, for a contradiction, that $\phi : \omega^\omega \times [\omega_\omega]^{<\omega} \to \Pk{\aleph_\omega}$ is order preserving and has cofinal image. 

Take any $F$ in $[\omega_\omega]^{<\omega}$. Let $C_F = \bigcup_n \pi_n \left(\phi( \omega^\omega \times \{F\})\right)$. 
Then $C_F$ is countable. If not then for some $n$, the set $\pi_n \left(\phi( \omega^\omega \times \{F\})\right)$ is uncountable. Pick, for each $\alpha$ in $\omega_1$ a $\sigma_\alpha$ in $\omega^\omega$ so that $\pi_n \left(\phi(\sigma_\alpha,F)\right) \ne \pi_n \left(\phi(\sigma_\alpha',F)\right)$ whenever $\alpha \ne \alpha'$.
As  $\omega^\omega$ has calibre $(\omega_1,\omega)$, there is an infinite subset of the $\sigma_\alpha$'s with an upper bound -- but the corresponding $\pi_n \left(\phi(\sigma_\alpha,F)\right)$'s do not have an upper bound, contradicting $\phi$ order preserving.

Note that $\phi(\omega^\omega \times \{F\}) \subseteq C_F^\omega$, and so $\phi(\omega^\omega \times [\omega_\omega]^{<\omega}) \subseteq \bigcup \{C_F^\omega : F \in [\omega_\omega]^{<\omega}\} =\mathcal{C}$. We show that $\mathcal{C}$ is not cofinal.
For each $n$ in $\omega$ set $S_n= \bigcup \bigcup \{ C_F : F \in [\omega_n]^{<\omega}\}$. Then $|S_n| \le \aleph_n$. So pick $F_n \in [\omega_\omega]^{<\omega}$ disjoint from $S_n$. Since every finite subset of $\omega_\omega$ is contained in some $\omega_n$, we see that $(F_n)_n$ is not below any element of $\mathcal{C}$.
\end{proof}

\begin{lemma}\label{cof}
We have (1) $\cof{\omega^\omega \times [\omega_{\omega}]^{<\omega}}=\max{(\mathfrak{d},\aleph_\omega)}$, while  (2) $\cof{\Pk{\aleph_\omega}}=\max{(\mathfrak{d},\cof{[\aleph_\omega]^{\le \omega}})}$.
\end{lemma}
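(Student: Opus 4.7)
The plan is to prove each of (1) and (2) by bracketing the cofinality with matching upper and lower bounds, exploiting in both cases that a product of directed sets of infinite cofinality has cofinality equal to the maximum of the factors' cofinalities.

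For (1) I would appeal directly to the general identity $\cof{A\times B}=\max(\cof{A},\cof{B})$ when both factors have infinite cofinality: a Cartesian product of cofinal sets is cofinal in the product, and each projection is a Tukey quotient, so $\cof{\cdot}$ respects both bounds. That reduces (1) to computing $\cof{[\omega_\omega]^{<\omega}}=\aleph_\omega$. The upper bound is trivial, since the whole poset has size $\aleph_\omega$, and the lower bound is immediate because any cofinal set must, for each $\alpha<\aleph_\omega$, contain some $F$ with $\alpha\in F$; alternatively one can invoke Lemma~\ref{k}. Combined with $\cof{\omega^\omega}=\mathfrak{d}$, this gives (1).

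For (2) the lower bound $\cof{\Pk{\aleph_\omega}}\ge\max(\mathfrak{d},\cof{[\aleph_\omega]^{\le\omega}})$ comes from two Tukey quotients. The map $(F_n)_n\mapsto(|F_n|)_n$ witnesses $\Pk{\aleph_\omega}\tr\omega^\omega$: it is order preserving, and for each $\tau\in\omega^\omega$ the point $(\{0,\dots,\tau(n)\})_n$ is mapped above $\tau$. The map $(F_n)_n\mapsto\bigcup_n F_n$ witnesses $\Pk{\aleph_\omega}\tr[\aleph_\omega]^{\le\omega}$: each $A=\{a_n:n\in\omega\}$ is the image of $(\{a_n\})_n$. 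Monotonicity of $\cof{\cdot}$ under $\tr$ then gives the claimed lower bound.

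The substance, and the main obstacle, is the matching upper bound for (2), because a countable subset of $\aleph_\omega$ alone does not determine an element of $\Pk{\aleph_\omega}$ and the $\omega^\omega$-type growth information has to be layered on top. My plan is to fix $D\subseteq\omega^\omega$ cofinal of size $\mathfrak{d}$, $S\subseteq[\aleph_\omega]^{\le\omega}$ cofinal of size $\cof{[\aleph_\omega]^{\le\omega}}$, and for each $s\in S$ an enumeration $s=\{a^s_i:i<\omega\}$ (with repetitions when $s$ is finite). For $(s,\sigma)\in S\times D$ I set $F^{s,\sigma}_n=\{a^s_i:i\le\sigma(n)\}$. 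The family $\{(F^{s,\sigma}_n)_n:(s,\sigma)\in S\times D\}$ has size $\max(\mathfrak{d},\cof{[\aleph_\omega]^{\le\omega}})$, so it remains only to check cofinality: given $(G_n)_n\in\Pk{\aleph_\omega}$, choose $s\in S$ with $\bigcup_n G_n\subseteq s$, let $\tau(n)=\max\{i:a^s_i\in G_n\}\in\omega^\omega$, and pick $\sigma\in D$ with $\tau\le\sigma$ coordinatewise; then $G_n\subseteq F^{s,\sigma}_n$ for every $n$, closing the bound.
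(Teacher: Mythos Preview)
Your proof is correct and follows essentially the same route as the paper: part~(1) is dismissed as immediate, and for part~(2) both you and the paper obtain the lower bound from the Tukey quotients $(F_n)_n\mapsto\bigcup_n F_n$ and a projection to $\omega^\omega$, then build the cofinal family $\{(F^{s,\sigma}_n)_n\}$ exactly as you do. One small wrinkle: your definition $\tau(n)=\max\{i:a^s_i\in G_n\}$ fails when $s$ is finite and enumerated with repetitions (the set of indices is then infinite); the paper sidesteps this by assuming without loss of generality that every $s\in S$ is infinite, and you can do the same, or replace the $\max$ by the least $m$ with $G_n\subseteq\{a^s_i:i\le m\}$.
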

\begin{proof}
Part (1) is immediate. We show (2).
First note that clearly $\Pk{\aleph_\omega} \tq \omega^\omega$  and $\tq [\aleph_\omega]^{\le \omega}$ (for the latter consider $(F_n)_n \mapsto \bigcup_n F_n$). 
Now suppose $\mathcal{C}_0$ is cofinal in $[\aleph_\omega]^{\le \omega}$. We may assume that each element of $\mathcal{C}_0$ is infinite. For each $C$ in $\mathcal{C}_0$ enumerate $C$ as $\{ c_i : i\in \omega \}$. Fix a cofinal subset $\mathcal{D}$ of $\omega^\omega$ of size $\mathfrak{d}$, and for each $\sigma$ in $\mathcal{D}$ and $n$ set $F_n^{\sigma, C} = \{c_0, \ldots c_{\sigma (n)} \}$. Then $\mathcal{C}_1 = \{ (F_n^{\sigma, C})_n : \sigma \in \mathcal{D} \text{ and } C\in \mathcal{C}_0\}$ is cofinal in $\Pk{\aleph_\omega}$ and has size no more than $\max{(\mathfrak{d}, |\mathcal{C}_0|)}$.
\end{proof}

\section{Cofinality, Calibres and Spectrum for Total Imperfects}\label{Invs}

\subsection{Some Upper and Lower Bounds}





If $M$ is a non-locally compact metrizable space then the metric fan, $\mathbb{F}$, embeds in $M$ as a closed subspace. It is easy to see that $\K(\mathbb{F}) \te \omega^\omega$. So we immediately get:

\begin{lemma}\label{not_loc_cpt} Let $M$ be metrizable and non-locally compact. Then $\omega^\omega \leq \K(M)$.  
\end{lemma}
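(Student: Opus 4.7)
My plan is to follow exactly the two-step outline the authors give immediately before the statement, and then combine with Lemma~\ref{km_pres}(1). First, I would invoke (or, if needed, reprove) the classical fact that any metrizable non-locally-compact space $M$ contains the metric fan $\mathbb{F}$ as a closed subspace. Here $\mathbb{F}$ is the countable metric space $\{\infty\} \cup (\omega \times \omega)$ in which every $(n,m)$ is isolated and $(n,m) \to \infty$ as $m \to \infty$ for each fixed $n$, realized for instance as $\{(0,0)\} \cup \{(1/n, 1/(nm)) : n,m \ge 1\} \subseteq \mathbb{R}^2$.

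To get the embedding, pick $x \in M$ having no compact neighborhood and build the rays recursively. Since $\overline{B(x, 1/n)}$ fails to be compact for each $n$, there is a closed discrete infinite subset of $B(x, 1/n)$; at stage $n$, staying outside of the previously chosen (finitely many) rays' closures, choose such a sequence $(p^m_n)_m$ inside $B(x, 1/n) \setminus \{x\}$ with $p^m_n \to x$ but with the points on the ray separated from the previous rays. Mapping $\infty \mapsto x$ and $(n,m) \mapsto p^m_n$ gives the required homeomorphism onto a closed subspace, since the only possible accumulation point of $\{p^m_n\}$ in $M$ is $x$ by construction.

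Next I would verify $\K(\mathbb{F}) \te \omega^\omega$ by exhibiting mutual order-preserving cofinal maps. Define $\phi : \omega^\omega \to \K(\mathbb{F})$ by
\[
\phi(f) = \{\infty\} \cup \{(n,m) : m \le f(n)\}.
\]
Each $\phi(f)$ is compact (any sequence in it either repeats a point or has $n \to \infty$, in which case it converges to $\infty$), and $\phi$ is order preserving. Conversely, if $K \in \K(\mathbb{F})$ then for each $n$, $K_n = \{m : (n,m) \in K\}$ is finite (an infinite $K_n$ would force a subsequence with no limit in $\mathbb{F}$), so defining $\psi(K)(n) = \max K_n$ (or $0$) yields an order preserving $\psi : \K(\mathbb{F}) \to \omega^\omega$ with $K \subseteq \phi(\psi(K))$. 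Thus $\phi$ has cofinal image and $\omega^\omega \te \K(\mathbb{F})$.

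Finally, I would combine: by Lemma~\ref{km_pres}(1), since $\mathbb{F}$ is a closed subspace of $M$, we have $\K(M) \tq \K(\mathbb{F})$; and since $\K(\mathbb{F}) \te \omega^\omega$, this gives $\omega^\omega \le_T \K(M)$. The only genuinely nontrivial step is the closed embedding of $\mathbb{F}$, which is the standard characterization of metrizable local compactness via the absence of closed metric fans; the Tukey computations are routine.
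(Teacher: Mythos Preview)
Your approach is exactly the one the paper outlines in the sentence preceding the lemma: embed the metric fan $\mathbb{F}$ as a closed subspace, verify $\K(\mathbb{F})\te\omega^\omega$, and apply Lemma~\ref{km_pres}(1).

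One slip to correct: your abstract description of $\mathbb{F}$ (``$(n,m)\to\infty$ as $m\to\infty$ for each fixed $n$'') contradicts both your concrete realization in $\mathbb{R}^2$ (where $(1/n,1/(nm))\to(1/n,0)\notin\mathbb{F}$ as $m\to\infty$) and your own argument that each $K_n$ must be finite. In the metric fan each individual ray is closed discrete, and it is the \emph{rays} that accumulate at the apex as $n\to\infty$; this is precisely what makes $K\cap(\{n\}\times\omega)$ finite for every compact $K$. Your embedding sketch has the same confusion, asking for a closed discrete sequence $(p^m_n)_m$ with $p^m_n\to x$, which is impossible as stated; the $n$th ray should be an infinite closed discrete subset of $B(x,1/n)$ that does \emph{not} accumulate at $x$. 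With the convergence direction fixed, everything you wrote is correct and matches the paper.
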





Recall the definition of a \emph{scattered} space. For a space $X$, let $X'$ be the set of all isolated points of $X$. Let $X^{(0)} = X$ and define $X^{(\alpha)} = X \backslash \bigcup_{\beta < \alpha} (X^{(\beta)})'$ for each $\alpha >0$. Then a space $X$ is called scattered if $X^{(\alpha)} = \emptyset$ for some ordinal $\alpha$. This $\alpha$ is called the \emph{scattered height} of $X$ and is denoted by $\h(X)$. Every countable separable metrizable compact space is scattered (with countable scattered height). 

Note that $\Q$ contains compact subsets of arbitrarily large countable scattered height (every countable ordinal embeds in $\Q$); and every non-scattered separable metrizable space contains a copy of $\Q$.  Now we present a lower bound. 

\begin{lemma}\label{omega_1_imperfect}
Let $B$ a totally imperfect separable metrizable space that is not scattered. Then $\lambda  \times \omega^\omega \leq_T \K(B)$ for every cardinal $\lambda \le \max( \omega_1,|B|)$.
\end{lemma}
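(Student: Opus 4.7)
The plan is to exhibit a Tukey map $\psi : \lambda \times \omom \to \K(B)$ of the form $\psi(\alpha,\sigma) = D_\alpha \cup F_\sigma$, where the family $\{D_\alpha : \alpha<\lambda\}$ will witness $\lambda \le_T \K(B)$ and the family $\{F_\sigma : \sigma \in \omom\}$ will witness $\omom \le_T \K(B)$ through a closed copy of the metric fan sitting inside $B$.

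First I would show $B$ is not locally compact: otherwise each point would have a compact, and hence (by total imperfectness) countable, and therefore scattered, neighbourhood, so $B$ would be locally scattered; but a locally scattered metrizable space is scattered (any dense-in-itself non-empty subset immediately conflicts with the scatteredness of a neighbourhood of any of its points), contradicting the hypothesis. Lemma~\ref{not_loc_cpt} then gives a closed copy of $\mathbb{F}$ inside $B$, and since $\K(\mathbb{F}) \te \omom$ I fix a Tukey map $\tau : \omom \to \K(\mathbb{F})$ and set $F_\sigma = \tau(\sigma)$. Next, since $B$ is non-scattered and separable metrizable, its perfect kernel is non-empty and contains a countable dense subset, which is homeomorphic to $\Q$; this supplies compact subsets of $B$ of every countable scattered rank.

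Since $\lambda \te \cof{\lambda}$ as directed sets for any infinite cardinal $\lambda$, I may assume $\lambda$ is regular. The case $\lambda \le \omega$ collapses to $\omom \le_T \K(B)$, so assume $\lambda \ge \omega_1$. For $\lambda = \omega_1$, let $D_\alpha$ be a compact subset of $B$ of scattered rank at least $\alpha$; for $\lambda > \omega_1$ (so $\lambda \le |B|$), pick $\lambda$ distinct points $x_\alpha \in B$ and set $D_\alpha = \{x_\alpha\}$. In either case $\psi(\alpha,\sigma) = D_\alpha \cup F_\sigma$ lies in $\K(B)$.

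To verify $\psi$ is Tukey, I would suppose $\mathcal{L} \subseteq \lambda \times \omom$ is unbounded and, for contradiction, that some $K \in \K(B)$ bounds $\psi(\mathcal{L})$. Either the $\lambda$-projection $A$ of $\mathcal{L}$ is cofinal in $\lambda$ or the $\omom$-projection $\Sigma$ is unbounded in $\omom$. In the first case $K \supseteq D_\alpha$ for every $\alpha \in A$: when $\lambda = \omega_1$ this forces the scattered rank of $K$ to be cofinal in $\omega_1$, while when $\lambda > \omega_1$ it forces $K$ to contain at least $\lambda > \omega$ distinct points, each contradicting $K$ being countable (a consequence of $B$ totally imperfect). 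In the second case $K \cap \mathbb{F}$ is compact in the closed subspace $\mathbb{F}$ and contains $F_\sigma$ for every $\sigma \in \Sigma$, contradicting that $\tau$ is a Tukey map. The only substantive obstacle is the preliminary non-local-compactness argument, which is the only place where the hypotheses of total imperfectness and non-scatteredness actually combine; after that the construction is a clean case split.
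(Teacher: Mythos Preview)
Your argument is correct and follows essentially the same approach as the paper: the same scattered-height construction for $\lambda=\omega_1$, the same singleton construction for regular $\lambda>\omega_1$, and the same appeal to non-local-compactness for the $\omom$ factor. The only cosmetic difference is that you combine the two pieces by hand via $\psi(\alpha,\sigma)=D_\alpha\cup F_\sigma$, whereas the paper establishes $\lambda\le_T\K(B)$ and $\omom\le_T\K(B)$ separately and then invokes $\K(B)\times\K(B)\te\K(B)$ to conclude.
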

\begin{proof}
Since non-scattered totally imperfect separable metrizable spaces are not locally compact we have $\omega \leq_T \omega^\omega \leq_T \K(B)$. 

First we show that $\omega_1 \leq_T \K(B)$. (And note that then $\omega_1 \times \omega^\omega \leq_T \K(B) \times \K(B) \te \K(B)$.) Fix $\alpha \in \omega_1$. Since $B$ contains $\Q$, we can pick a compact subset $K_\alpha$ of $B$ such that $\h(K_\alpha)>\alpha$. Define $\psi : \omega_1 \to \K(B)$ by $\psi(\alpha) = K_\alpha$. Consider an unbounded subset $S$ of $\omega_1$. Suppose there is $K \in \K(B)$ that bounds $\psi(S)$ from above. Since $S$ is unbounded in $\omega_1$, there exists $\alpha \in S$ with $\h(K) < \alpha < \h(K_\alpha)$. Then $K_\alpha \subseteq K$ contradicts the fact that if $X\subseteq Y$ then $\h(X) \leq \h(Y)$. Therefore, the map $\psi$ is a Tukey map.

Now suppose $\lambda$ is  no more than $|B|$. Since $\cof{\lambda} \te \lambda$, we can assume $\lambda$ is regular. The case $\lambda=\omega$ has already been dealt with, so assume $\lambda$ is uncountable. Enumerate $B=\{x_\alpha : \alpha < \kappa\}$. Define $\psi : \lambda \to \K(B)$ by $\psi (\alpha) = \{x_\alpha\}$. If $S$ is unbounded in $\lambda$ it has size $\lambda$, so $\psi(S)$ has size $\lambda$. Hence its closure in $B$ can not be compact (all compact subsets of $B$ are countable), in other words $\psi(S)$ is unbounded in $\K(B)$, as required for $\psi$ to be a Tukey map.
\end{proof}

Let $B$ be any zero dimensional, totally imperfect, separable metrizable space. Then it has a countable base, $\B =\{B_n : n\in \omega \}$, consisting of sets that are closed and open, such that $\B$ is closed under complements, finite intersections and finite unions. Note that for a compact scattered space $K$ there is $\alpha$ such that $K^{(\alpha)}$ is finite. So, if $K \in \K(B)$, then there is $\alpha \in \omega_1$ such that $K^{(\alpha)}$ is finite. For a fixed $\alpha$ and (finite) subset $F$ of $B$, let  $\K_\alpha^F (B) = \{ K\in \K(B) : K^{(\alpha)} \subseteq F, \ F\subseteq K \}$, and $\K_\alpha (B) = \bigcup \{K_\alpha^F (B) : F \subseteq B\}$. Suppose we have described elements of $\K_\beta (B)$ for each $\beta <\alpha$. Suppose $K\in \K^{\{x\}}_\alpha (B)$ for some $x\in B$. Pick a decreasing local base at $x$, $\{B'_{x,n}\}_{n\in \omega}$. Let $B_{x,0} = B \backslash B'_{x,0}$ and $B_{x,n} = B'_{x,n} \backslash B'_{x,n-1}$ for each $n\in \omega$. Then each $B_{x,n}$ is in $\B$. If we let $K_n = K\cap B_{x,n}$, we get $K=\{x\} \cup \bigcup_{n\in \omega} K_n$. Note that each $K_n$ is compact (since elements of $\B$ are closed) and is an element of  $\K_{\beta_n} (B)$ for some $\beta_n <\alpha$.

\begin{lemma}\label{ti} Let $B$ be totally imperfect separable metrizable space. Then (1) for each $\alpha$ in $\omega_1$,  $( [|B|]^{<\omega})^{\omega} \tr \K_\alpha (B)$; 
and  hence (2) $( [\max (\omega_1, |B|)^{<\omega})^{\omega} \tr \K(B)$. 
\end{lemma}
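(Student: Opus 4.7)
The plan is to prove (1) by transfinite induction on $\alpha < \omega_1$, strengthened to track each finite ``top'' $F$ separately, and then to deduce (2) by assembling the $\K_\alpha(B)$'s using Lemma~\ref{combine}. Write $\kappa = |B|$ and adopt the zero-dimensional setup from the paragraph preceding the lemma, so the annular clopen partitions $\{B_{x,n} : n \in \omega\}$ of $B \setminus \{x\}$ are at hand. Both $\K_\alpha(B)$ and $\K_{<\alpha}(B) := \bigcup_{\beta<\alpha}\K_\beta(B)$ are Dedekind complete, and a standard compactness argument shows that the scattered rank of any nonempty compact scattered space is a successor ordinal.

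The strengthened inductive claim is: \emph{for each $\alpha < \omega_1$ and each finite $F \subseteq B$, $([\kappa]^{<\omega})^\omega \tr (\K_\alpha^F(B), \K_\alpha(B))$.} Assuming it for all $\beta < \alpha$, Lemma~\ref{combine} applied to $\K_{<\alpha}(B) = \bigcup_{\beta<\alpha}\K_\beta(B)$ (countable index), together with $([\kappa]^{<\omega})^\omega \times [\omega]^{<\omega} \te ([\kappa]^{<\omega})^\omega$, yields $([\kappa]^{<\omega})^\omega \tr \K_{<\alpha}(B)$, and then an $\omega$-th power gives $([\kappa]^{<\omega})^\omega \tr (\K_{<\alpha}(B))^\omega$. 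Now sub-induct on $m = |F|$. For $m = 0$ the successor-rank fact yields $\K_\alpha^\emptyset(B) = \K_{<\alpha}(B)$, done. For $m = 1$, $F = \{x\}$, the key map is $\phi : (\K_{<\alpha}(B))^\omega \to \K_\alpha^{\{x\}}(B)$ with $\phi((L_n)_n) = \{x\} \cup \bigcup_n (L_n \cap B_{x,n})$: it is order-preserving, takes values in $\K_\alpha^{\{x\}}(B)$ (see the obstacle below), and $L_n := K \cap B_{x,n}$ (which lies in $\K_{<\alpha}(B)$ because $K^{(\alpha)} \cap B_{x,n} \subseteq \{x\} \cap B_{x,n} = \emptyset$) returns $K$, giving cofinality for $\K_\alpha^{\{x\}}$. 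For $m > 1$, use zero-dimensionality to pick a clopen partition $B = W_0 \sqcup W_1 \sqcup \dots \sqcup W_m$ with $x_i \in W_i$ for $i \ge 1$ and $F \cap W_0 = \emptyset$; any $K \in \K_\alpha^F(B)$ splits as $K \cap W_0 \in \K_{<\alpha}(B)$ together with $K \cap W_i \in \K_\alpha^{\{x_i\}}(B)$ for $i \ge 1$. Partition $\omega = S_0 \sqcup \dots \sqcup S_m$ into infinite pieces and combine via $\psi((F_n)_n) = (\psi_0((F_n)_{S_0}) \cap W_0) \cup \bigcup_{i \ge 1}(\psi_i((F_n)_{S_i}) \cap W_i)$, where $\psi_0$ and the $\psi_i$ are the relative Tukey quotients already in hand; intersecting with the disjoint clopen $W_i$'s puts the pieces in disjoint clopen regions so the union's scattered rank stays $\le \alpha + 1$, and both cofinality for $\K_\alpha^F$ and order-preservation are inherited.

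Lemma~\ref{combine} applied to $\K_\alpha(B) = \bigcup_{F \in [B]^{<\omega}} \K_\alpha^F(B)$ (index set of size $\kappa$), together with the strengthened claim, then gives $([\kappa]^{<\omega})^\omega \times [\kappa]^{<\omega} \tr \K_\alpha(B)$, which is Tukey equivalent to $([\kappa]^{<\omega})^\omega$, completing (1). For (2), every compact subset of the totally imperfect $B$ has countable scattered rank, so $\K(B) = \bigcup_{\alpha<\omega_1}\K_\alpha(B)$; Lemma~\ref{combine} (index $\omega_1$) together with (1) yields $([\kappa]^{<\omega})^\omega \times [\omega_1]^{<\omega} \tr \K(B)$, and the left side is dominated by $([\max(\omega_1,\kappa)]^{<\omega})^\omega$.

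The main obstacle is the $m = 1$ verification that $\phi((L_n)_n) = \{x\} \cup \bigcup_n (L_n \cap B_{x,n})$ really lands in $\K_\alpha^{\{x\}}(B)$. Compactness uses that every open neighbourhood of $x$ eventually swallows all the $B_{x,n}$, so $\phi((L_n)_n)$ is the one-point accumulation at $x$ of the disjoint compact clopen pieces $L_n \cap B_{x,n}$. For the rank bound, each piece is clopen in $\phi((L_n)_n)$ with $(L_n \cap B_{x,n})^{(\alpha)} \subseteq L_n^{(\alpha)} = \emptyset$ (since $L_n \in \K_{<\alpha}(B)$); by locality of the scattered derivative on clopen pieces, $(\phi((L_n)_n))^{(\alpha)} \subseteq \{x\}$, placing $\phi((L_n)_n)$ in $\K_\alpha^{\{x\}}(B)$.
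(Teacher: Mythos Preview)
Your argument is correct and follows essentially the paper's approach: induct on $\alpha$, handle singleton $F=\{x\}$ via the clopen annuli $B_{x,n}$, pass to general finite $F$, and assemble everything with Lemma~\ref{combine}. Two small items you elide that the paper makes explicit: the case $|B|=\mathfrak{c}$ (where $B$ need not be zero-dimensional, so the annular setup is unavailable, but the claim is trivial since then $|\K(B)|\le\mathfrak{c}$ and Lemma~\ref{k} applies), and the base case $\alpha=0$ (where $\K_0(B)=[B]^{<\omega}$ and your identity $\K_\alpha^\emptyset(B)=\K_{<\alpha}(B)$ degenerates). The only genuine methodological difference is that the paper passes from $|F|=1$ to arbitrary $F$ by taking the product $\prod_{x\in F}(\K_\alpha^{\{x\}}(B),\K_\alpha(B))$ and mapping to the union (using that Cantor--Bendixson derivatives distribute over finite unions of closed sets), which is somewhat cleaner than your clopen-partition sub-induction on $|F|$, though both work.
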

\begin{proof} Let $\kappa=\max (\omega_1, |B|)$ and $\lambda=|B|$. If $\lambda=\ctm$ then, as $|
K(B)|=|\K_\alpha (B)|=\ctm$, claims (1) and (2) are immediate from Lemma~\ref{k}. So assume $\lambda < \ctm$, and hence $B$ is zero dimensional. 
Since $\K(B)=\bigcup_{\alpha \in \omega_1} \K_\alpha (B)$,  from the first part and Lemma~\ref{combine} we get $\K(B) \leq_T ( [\lambda]^{<\omega})^{\omega} \times [\omega_1]^{<\omega} \leq_T ( [\kappa]^{<\omega})^{\omega} \times [\kappa]^{<\omega} \te ( [\kappa]^{<\omega})^{\omega}$.

We prove that $( [\lambda]^{<\omega})^{\omega} \tr \K_\alpha (B)$ by induction on $\alpha$. For the base case note that the set $\K_0 (B)$ is $[B]^{<\omega}$, so $( [\lambda]^{<\omega})^{\omega} \tr \K_0 (B)$. Now suppose $\alpha>0$. Define $\beta_n$'s for $n \in \omega$, as follows: if $\alpha$ is a limit then pick an increasing sequence, $\{\beta_n\}$, converging to $  \alpha$, otherwise let $\beta_n=\alpha-1$ for each $n$. Let $\K_{<\alpha} (B)= \bigcup_{\beta < \alpha} \K_\beta (B)= \bigcup_{n \in \omega} \K_{\beta_n} (B)$. By inductive hypothesis, for each $n$, $( [\lambda]^{<\omega})^{\omega} \tr \K_{\beta_n} (B)$. Hence by Lemma~\ref{combine}, $( [\lambda]^{<\omega})^{\omega} \tr ( [\lambda]^{<\omega})^{\omega} \times [\omega]^{<\omega} \tr \K_{<\alpha} (B)$.

Suppose that, for a fixed $x$ in $B$, we have  $( [\lambda]^{<\omega})^{\omega} \tr (\K_\alpha^{\{x\}} (B), \K_\alpha (B))$. Then for any $F \subseteq B$, we see that $( [\lambda]^{<\omega})^{\omega} \te (( [\lambda]^{<\omega})^{\omega})^{|F|} \tr \prod_{x \in F} (\K_\alpha^{\{x\}} (B),\K_\alpha (B)) \tr (K_\alpha^F (B),\K_\alpha (B))$ (for the last relation take the union). Since $\K_\alpha (B) =   \bigcup \{K_\alpha^F (B) : F \subseteq B\}$, and $( [\lambda]^{<\omega})^{\omega} \tr \K_\alpha^F (B)$,  by Lemma~\ref{combine}, we have $( [\lambda]^{<\omega})^{\omega} \te ( [\lambda]^{<\omega})^{\omega} \times [\lambda]^{<\omega} \tr ( [\lambda]^{<\omega})^{\omega} \times \left[ [B]^{<\omega} \right]^{<\omega} \tr \K_\alpha (B)$.

Fix, then, $x$ in $B$. 
Recall that associated with $x$ we have a sequence $\{ B_{x,n} \}_n$ of basic clopen sets. For each $n$, fix $\phi'_n : ( [\lambda]^{<\omega})^{\omega} \to \K_{<\alpha}(B)$ and define $\phi_n : ( [\lambda]^{<\omega})^{\omega} \to \K_{<\alpha}(B_{x,n})$ by $\phi_n(\tau) = \phi'_n(\tau) \cap B_{x,n}$. Since each $B_{x,n}$ is closed, each $\phi_n$ is a Tukey quotient. For $\sigma \in ( [\lambda]^{<\omega})^{\omega \times \omega}$ and $n\in \omega$, define $\sigma_n \in ( [\lambda]^{<\omega})^{\omega}$ by $\sigma_n(m) = \sigma(m,n)$. Now define $\phi : ( [\lambda]^{<\omega})^{\omega \times \omega} \to \K_{\alpha} (B)$ by $\phi (\sigma) = \{x\} \cup \bigcup_{n \in \omega} \phi_{n} (\sigma_n)$. Then $\phi$ is order preserving, and from our description of elements of $\K_\alpha (B)$, we see that its image is cofinal for $\K_{\alpha}^{\{x\}} (B)$ in $\K_\alpha (B)$. 
\end{proof}

With the assistance of  Proposition~\ref{aleph_n} we summarize the bounds for totally imperfect spaces as follows.

\begin{theorem}\label{TI_bounds}
Suppose $B$ is a non-scattered totally imperfect separable metrizable space and $\max (\omega_1,|B|) = \kappa$. Then

(1) $\lambda \times \omega^\omega \leq_T \K(B) \leq_T ( [\kappa]^{<\omega})^{\omega} $, for all $\lambda \le \kappa$; and

(2) if $\kappa = \aleph_n$ for some  $n\in \omega$, then $\omega_m\times \omega^\omega \leq_T \K(B) \leq_T  \omega^\omega \times [\kappa]^{<\omega}$, for all $m \le n$; 
in particular, $\omega_1 \times \omega^\omega \leq_T \K(\Q) \leq_T \omega^\omega \times [\omega_1]^{<\omega}$.
\end{theorem}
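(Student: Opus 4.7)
The plan is to assemble this theorem directly from the three preceding results: Lemma~\ref{omega_1_imperfect} gives the lower bounds, Lemma~\ref{ti} gives the universal upper bound in terms of $([\kappa]^{<\omega})^\omega$, and Proposition~\ref{aleph_n}(1) lets us rewrite that upper bound more concretely when $\kappa$ is below $\aleph_\omega$. There is essentially no new Tukey construction to do; the work is in checking that the hypotheses of each lemma are met and that the rewrites are legitimate.

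For part (1), I would start by noting that $B$ is non-scattered and totally imperfect, so Lemma~\ref{omega_1_imperfect} applies and gives $\lambda \times \omega^\omega \le_T \K(B)$ for every $\lambda \le \max(\omega_1,|B|) = \kappa$. For the upper bound, Lemma~\ref{ti}(2) yields $\K(B) \le_T \left([\kappa]^{<\omega}\right)^\omega$ immediately, since by definition $\kappa = \max(\omega_1,|B|)$. That disposes of (1).

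For part (2), assume $\kappa = \aleph_n$ with $n \in \omega$. The lower bound $\omega_m \times \omega^\omega \le_T \K(B)$ for $m \le n$ is just part (1) specialized to $\lambda = \omega_m \le \aleph_n = \kappa$. For the upper bound, part (1) gives $\K(B) \le_T \left([\aleph_n]^{<\omega}\right)^\omega$, and Proposition~\ref{aleph_n}(1) supplies $\left([\aleph_n]^{<\omega}\right)^\omega = \Pk{\aleph_n} \te \omega^\omega \times [\omega_n]^{<\omega}$. Chaining these gives the desired $\K(B) \le_T \omega^\omega \times [\kappa]^{<\omega}$. The statement about $\Q$ is then the special case: $\Q$ is non-scattered and totally imperfect with $|\Q|=\aleph_0$, so $\kappa = \max(\omega_1,\aleph_0) = \aleph_1$, placing us in the $n=1$ case of (2); taking $m=1$ yields $\omega_1 \times \omega^\omega \le_T \K(\Q) \le_T \omega^\omega \times [\omega_1]^{<\omega}$.

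There is no real obstacle here, since all the Tukey work has been done in the earlier lemmas. The only minor point to double-check is the bookkeeping for the $\Q$ case, namely that Lemma~\ref{omega_1_imperfect} really does allow $\lambda = \omega_1$ even though $|\Q| = \aleph_0 < \omega_1$: this is fine because Lemma~\ref{omega_1_imperfect} permits $\lambda \le \max(\omega_1,|B|)$, not merely $\lambda \le |B|$. With that observation recorded, the theorem is just a restatement of what has already been proved.
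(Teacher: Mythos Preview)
Your proposal is correct and matches the paper's approach exactly: the paper presents this theorem as a summary (``With the assistance of Proposition~\ref{aleph_n} we summarize the bounds\ldots'') with no separate proof, and you have correctly identified that the lower bounds come from Lemma~\ref{omega_1_imperfect}, the upper bound from Lemma~\ref{ti}(2), and the simplification in part~(2) from Proposition~\ref{aleph_n}(1). Your bookkeeping check on the $\Q$ case is accurate and there is nothing to add.
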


\subsection{Cofinality}

Our upper and lower Tukey bounds (Theorem~\ref{TI_bounds}, and apply Lemma~\ref{cof})  immediately  give upper and lower bounds on the cofinality of $\K(B)$ for $B$ a totally imperfect separable metrizable space. When $B$ is `small' -- has cardinality $< \aleph_\omega$ -- we can compute the cofinality of $\K(B)$ exactly in terms of the size of $B$.

\begin{cor}
Suppose $B$ is a non-scattered totally imperfect separable metrizable space and $\max (\omega_1,|B|) = \kappa$. Then

(1) $\max (\mathfrak{d}, |B|) \leq \cof{\K(B)} \leq \cof{ [\kappa]^{<\omega})^{\omega}}$,  and

(2) if $\kappa = \aleph_n$ for some  $n\in \omega$, then $\cof{\K(B)} = \max (\mathfrak{d}, |B|)$.
\end{cor}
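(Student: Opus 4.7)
The plan is to derive both inequalities directly from Theorem~\ref{TI_bounds} combined with Lemma~\ref{cof} and the order-invariance of cofinality (if $P \tq Q$ then $\cof{P} \ge \cof{Q}$). For (1), the upper bound is the cleanest part: since $\K(B) \le_T ([\kappa]^{<\omega})^\omega$ by Theorem~\ref{TI_bounds}(1), we immediately get $\cof{\K(B)} \le \cof{([\kappa]^{<\omega})^\omega}$.

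For the lower bound in (1), I would split the contribution of $\mathfrak{d}$ from that of $|B|$. Since $B$ is non-scattered it is in particular not locally compact, so Lemma~\ref{not_loc_cpt} gives $\omega^\omega \le_T \K(B)$ and hence $\mathfrak{d} = \cof{\omega^\omega} \le \cof{\K(B)}$. For the $|B|$-contribution (only meaningful when $|B|$ is uncountable), I would bypass the Tukey machinery and use the simple covering observation: any cofinal family $\C$ in $\K(B)$ must satisfy $B = \bigcup \C$, since each singleton is bounded above by some member of $\C$, and total imperfectness forces every $K \in \C$ to be countable. Therefore $|B| \le \aleph_0 \cdot |\C| = |\C|$, so $|B| \le \cof{\K(B)}$. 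Combining both gives $\max(\mathfrak{d}, |B|) \le \cof{\K(B)}$.

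For (2), the hypothesis $\kappa = \aleph_n$ upgrades Theorem~\ref{TI_bounds} to $\K(B) \le_T \omega^\omega \times [\kappa]^{<\omega}$. The same immediate calculation as in Lemma~\ref{cof}(1) shows $\cof{\omega^\omega \times [\aleph_n]^{<\omega}} = \max(\mathfrak{d}, \aleph_n)$, and hence $\cof{\K(B)} \le \max(\mathfrak{d}, \kappa)$. It remains to match $\max(\mathfrak{d}, \kappa)$ with $\max(\mathfrak{d}, |B|)$. This is a brief case split: if $|B|$ is uncountable then $\kappa = |B|$ by definition, while if $|B| = \aleph_0$ then $\kappa = \omega_1 \le \mathfrak{d}$, so $\max(\mathfrak{d}, \kappa) = \mathfrak{d} = \max(\mathfrak{d}, |B|)$ anyway. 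The resulting upper bound together with the lower bound from (1) gives equality.

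Because all the structural work has been carried out in Theorem~\ref{TI_bounds} and Lemma~\ref{cof}, the proof is largely bookkeeping; the only mild obstacle is the countable-versus-uncountable $|B|$ case split in the lower bound, which the covering-by-countable-compacta argument resolves without invoking any additional Tukey result.
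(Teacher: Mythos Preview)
Your argument is correct and tracks the paper's own reasoning, which simply cites Theorem~\ref{TI_bounds} and Lemma~\ref{cof} as giving the claim immediately. The one place you deviate is in the lower bound $|B|\le\cof{\K(B)}$: the paper's route would be to read this off from the Tukey relation $\lambda\times\omega^\omega\le_T\K(B)$ of Theorem~\ref{TI_bounds}(1) (ranging over regular $\lambda\le|B|$), whereas you instead use the direct covering observation that a cofinal family in $\K(B)$ must cover $B$ by countable sets. Your version is a perfectly good, slightly more elementary substitute, and it sidesteps the small extra step of passing from $\cof{\lambda}\le\cof{\K(B)}$ for regular $\lambda$ to the bound for arbitrary $|B|$.
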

As soon as $B$ has cardinality $\aleph_\omega$, however, the cofinality of $\K(B)$ is not determined by the size of $B$, nor determined in ZFC. 

\begin{exam} Suppose $\aleph_\omega < \ctm$. 
There are totally imperfect separable metrizable spaces $B'$ and $B''$ both of size $\aleph_\omega$ such that (1) $\cof{\K(B')} = \max (\mathfrak{d}, \aleph_\omega)$, but (2)  $\cof{\K(B'')} = \max (\mathfrak{d}, \cof{ [\aleph_\omega]^{\le \omega}})$.
Further, $\K(B'') >_T \K(B')$.
\end{exam}
\begin{proof}
By hypothesis there are, for each $n \ge 1$, subsets $B_n$ of $\R$ of cardinality $\aleph_n$. They are (necessarily, as they are uncountable but have size strictly less than $|\R|$) totally imperfect, and not scattered. We know from Theorem~\ref{TI_bounds}~(2) that  $\omega^\omega \times \aleph_n \le_T \K(B_n) \le_T   \omega^\omega \times [\aleph_n]^{<\omega}$.

Let $B' = \bigoplus_n B_n$. It has the stated properties, and since compact subsets of $B'$ are finite unions of compact subsets of finitely many $B_n$, it is straightforward to check that $\K(B') \le_T \omega^\omega \times [\aleph_\omega]^{<\omega}$. It quickly follows that $\cof{\K(B')} = \max (\mathfrak{d}, \aleph_\omega)$.

Let $B'' = B' \cup \{\ast\}$ where a basic open set around $\ast$ has the form $\{\ast\} \cup \bigoplus_{n \ge N} B_n$. Again $B''$ has the stated properties. 
Every compact subset of $B''$ is contained in a set (which is compact) of the form $\{\ast\} \cup \bigoplus_n K_n$ where each $K_n$ is compact in $B_n$. It follows that $\K(B'') \te \prod_n \K(B_n)$. Hence $\omega^\omega \times \prod_n \aleph_n \le_T \K(B'') \le_T \omega^\omega \times \prod_n [\aleph_n]^{<\omega}$. And so by Lemma~\ref{cof}, $\cof{\K(B'')} = \max (\mathfrak{d}, \cof{ [\aleph_\omega]^{\le \omega}})$.

To show that $\K(B'') \geq_T \K(B')$, pick a sequence $\{ x_n \}_n$ in $B''$ that does not converge in $B''$. For each $K \in \K(B')$ let $n_K$ be the smallest such that $K \subseteq B_{\leq n_K}$. Now define a map $\psi : \K(B') \to \K(B'')$ by $\psi(K)=K\cup \{ x_0, x_1, \ldots, x_{n_K} \}$. It can easily be verified that $\psi$ is a Tukey map.

It remains to show $\K(B') \not\tq \K(B'')$. 
For each $n \ge 1$ let $B_{\le n} = \bigoplus \{B_i : 1 \le i \le n\}$. We first show that if $\K(B_{\le n}) \tq \K(B_m)$ then $m \le n$. 
To see this note that $\omega^\omega \times [\aleph_n]^{<\omega} \tq \K(B_{\le n})$, so $\omega^\omega \times [\aleph_n]^{<\omega} \tq \K(B_m)$, and fix $\phi$ a witnessing order-preserving Tukey quotient.
For each $F$ in $[\aleph_n]^{<\omega}$ let $\K_F = \phi(\omega^\omega \times \{F\})$ and $S_F=\bigcup \K_F$. 
Then by a theorem of Christensen  \cite{Chris}, as $S_F$ has an $\omega^\omega$-ordered compact cover it is analytic. As $|B_m| <\ctm$, the analytic set $S_F$ must be countable. And so clearly $B_m = \bigcup \{ S_F : F \in [\aleph_n]^{<\omega}\}$ has size $\le \aleph_n$, and $m \le n$.

Now suppose for a contradiction that there is an order preserving Tukey quotient $\phi : \K(B') \to \K(B'')$. For each $n \ge 1$ define $\phi_n : \K(B_{\le n}) \to \K(B_{n+1})$ by $\phi_n(K) = \phi(K)  \cap B_{n+1}$ (where $B_{n+1}$ is considered as a subspace of $B''$).
Then $\phi_n$ is order preserving, but can not be a a Tukey quotient. So there is a $K_n$ in $\K(B_{n+1})$ which is not contained in any member of $\phi_n ( \K(B_{\le n}))$.

Set $K_\infty = \bigoplus \{K_n : n \ge 1\} \cup \{\ast\}$, and note this is in $\K(B'')$. Since $\K(B') =\bigcup_n \K(B_{\le n})$, by construction no element of $\phi(\K(B'))$ contains $K_\infty$. Contradicting $\phi$ a Tukey quotient.

To show that $\K(B'') \geq_T \K(B')$, pick a sequence $\{ x_n \}_n$ in $B''$ that does not converge in $B''$. For each $K \in \K(B')$ let $n_K$ be the smallest such that $K \subseteq B_{\leq n_K}$. Now define a map $\psi: \K(B') \to \K(B'')$ by $K \mapsto K\cup \{ x_0, x_1, \ldots, x_{n_K} \}$. The map $\psi$ is a Tukey map. Indeed, suppose $\mathcal{U} \subseteq \K(B')$ is unbounded. If $\bigcup \psi (\mathcal{U})$ contains infinitely many elements of $\{ x_n \}_n$, then $\psi (\mathcal{U})$ is also unbounded. On the other hand, if $\bigcup \psi (\mathcal{U})$ contains at most finitely many elements of $\{ x_n \}_n$, then, by definition of $\psi$, $\bigcup \mathcal{U}$ is contained in $B_{\leq n}$ for some $n$. Therefore, there is $m\leq n$ such that $(\bigcup \mathcal{U}) \cap B_m$ is not contained in a compact subset of $B_m$. Since $\bigcup \psi(\mathcal{U}) \subseteq \bigcup \mathcal{U}$, $\bigcup \psi(\mathcal{U})$ is unbounded. 
\end{proof}

Of course $\cof{ [\aleph_\omega]^{\le \omega}}$ is a central object of study in Shelah's {\small PCF} theory. It is known (see \cite{BM}, for example) that $\aleph_\omega < \cof{ [\aleph_\omega]^{\le \omega}} < \aleph_{\omega_4}$, and to get $\cof{ [\aleph_\omega]^{\le \omega}}$ strictly larger than $\aleph_{\omega+1}$ requires the consistency of large cardinals.
Note also that the Tukey type and cofinality of a $\K(B)$ where $B$ is totally imperfect and has size $\ge \aleph_\omega$ is dependent on what happens at $\aleph_\omega$.

\subsection{General Calibres}

A partially order set $P$ has calibre $(\kappa, \lambda, \mu)$ if for every $\kappa$-sized $P'\subseteq P$ there is a $\lambda$-sized $P''\subseteq P'$ such that every $\mu$-sized subset of $P''$ is bounded. `Calibre $(\kappa, \lambda, \lambda)$' is abbreviated to `calibre $(\kappa, \lambda)$' and `calibre $(\kappa, \kappa)$' is abbreviated to `calibre $\kappa$'. The following two straightforward facts were proven in \cite{KM} and \cite{KS}, respectively. 

\begin{lemma}\label{calibregoesdown}
If $P \tr Q$, $P$ has calibre $(\kappa,\lambda,\mu)$ and $\kappa$ is regular, then $Q$ has calibre $(\kappa,\lambda,\mu)$.
\end{lemma}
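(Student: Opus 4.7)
The plan is to use the dual formulation of Tukey reducibility: since $P \tr Q$, there is a Tukey map $\psi \colon Q \to P$, i.e.\ a map sending unbounded subsets of $Q$ to unbounded subsets of $P$. Equivalently, by the contrapositive, if $\psi(A)$ is bounded in $P$ then $A$ is bounded in $Q$. This is the workhorse of the argument.

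Starting with an arbitrary $\kappa$-sized $Q' \subseteq Q$, I would split into two cases based on the size of the image $\psi(Q')$. In the first case, assume $|\psi(Q')| = \kappa$. Choose, for each $p \in \psi(Q')$, a preimage $q_p \in Q'$, and let $\tilde{Q} = \{q_p : p \in \psi(Q')\}$; then $\psi$ is injective on $\tilde{Q}$ and $|\tilde{Q}| = \kappa$. Apply the calibre $(\kappa,\lambda,\mu)$ property of $P$ to $\psi(\tilde{Q})$ to obtain a $\lambda$-sized $P'' \subseteq \psi(\tilde{Q})$ every $\mu$-sized subset of which is bounded in $P$. Pull back via the injection to $Q'' = \{q_p : p \in P''\}$, a $\lambda$-sized subset of $Q'$. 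For any $\mu$-sized $Q''' \subseteq Q''$, injectivity gives $|\psi(Q''')| = \mu$, and $\psi(Q''') \subseteq P''$ is bounded in $P$, so by the Tukey map property $Q'''$ is bounded in $Q$.

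In the second case, $|\psi(Q')| < \kappa$. Here regularity of $\kappa$ enters: since $Q' = \bigcup_{p \in \psi(Q')} \psi^{-1}(p) \cap Q'$ is a union of fewer than $\kappa$ sets whose total size is $\kappa$, some fibre $\psi^{-1}(p_0) \cap Q'$ has size $\kappa$. Choose any $\lambda$-sized subset $Q''$ of this fibre. Then every subset of $Q''$ maps to the singleton $\{p_0\}$, which is trivially bounded in $P$, so by the Tukey map property every subset of $Q''$ (and in particular every $\mu$-sized one) is bounded in $Q$.

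I do not expect a serious obstacle; the only subtlety is that one must not assume $\psi$ is injective on $Q'$, which is handled by the case split and the use of regularity of $\kappa$ in Case~2. The argument uses neither Dedekind completeness nor any hypothesis on $Q$ beyond what is given.
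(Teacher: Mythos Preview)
The paper does not actually give a proof of this lemma; it cites \cite{KM} and calls the fact ``straightforward.'' Your argument via the dual Tukey map $\psi\colon Q\to P$ is correct and is the standard proof: the key point, which you handle cleanly, is that $\psi$ need not be injective, and the regularity of $\kappa$ is exactly what rescues the small-image case by forcing a $\kappa$-sized fibre. One minor remark: in Case~2 you implicitly use $\lambda\le\kappa$ when selecting a $\lambda$-sized subset of the fibre; this is harmless since if $P$ has any $\kappa$-sized subset (as it does in Case~1, and as is the only interesting situation) then calibre $(\kappa,\lambda,\mu)$ already forces $\lambda\le\kappa$, and the convention throughout is $\kappa\ge\lambda\ge\mu$.
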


\begin{lemma}\label{caltuk}
Suppose $\kappa$ is regular. Then (1) $P$ fails to have calibre $\kappa$ if and only if $P\tr \kappa$, (2) If $P\tr [\kappa]^{< \lambda}$ then $P$ fails to have calibre $(\kappa, \lambda)$ and the converse is true if $\add{P} \geq \lambda$. 
\end{lemma}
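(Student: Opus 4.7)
The plan is to split each biconditional into an easy direction, handled by a single appeal to Lemma~\ref{calibregoesdown}, and a hard direction, where an explicit Tukey map $\kappa \to P$ or $[\kappa]^{<\lambda} \to P$ is produced. For the easy directions, note that $\kappa$ itself fails calibre $\kappa$ (for regular $\kappa$, every $\kappa$-sized subset of $\kappa$ is cofinal, hence unbounded), and $[\kappa]^{<\lambda}$ fails calibre $(\kappa,\lambda)$ (the $\kappa$-sized family $\{\{\alpha\} : \alpha < \kappa\}$ has the property that any $\lambda$-sized subfamily has union of size $\lambda$, so is unbounded in $[\kappa]^{<\lambda}$). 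Contraposing Lemma~\ref{calibregoesdown} then gives $P \tr \kappa \Rightarrow P$ fails calibre $\kappa$ in (1), and $P \tr [\kappa]^{<\lambda} \Rightarrow P$ fails calibre $(\kappa,\lambda)$ in (2).

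For the hard direction of (1), I would take a $\kappa$-sized witness $P' \subseteq P$ to failure of calibre $\kappa$ and first make the simple but crucial observation that, since any superset of an unbounded set is unbounded, the defining property upgrades to the stronger statement that \emph{every} $\kappa$-sized subset of $P'$ is itself unbounded in $P$. Enumerating $P'$ bijectively as $\{p_\alpha : \alpha < \kappa\}$ and setting $\psi(\alpha) = p_\alpha$, any unbounded $A \subseteq \kappa$ has size $\kappa$ by regularity, so $\psi(A)$ is a $\kappa$-sized subset of $P'$ and hence unbounded. Thus $\psi$ is a Tukey map and $P \tr \kappa$.

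For the hard direction of (2), suppose $P$ fails calibre $(\kappa,\lambda)$ with $\add{P} \ge \lambda$. The same upgrade provides $P' = \{p_\alpha : \alpha < \kappa\} \subseteq P$ in which every $\lambda$-sized subset is unbounded. The additivity hypothesis is used precisely here: for each $F \in [\kappa]^{<\lambda}$, the set $\{p_\alpha : \alpha \in F\}$ has size $< \lambda \le \add{P}$, is therefore bounded in $P$, and so we may pick $\psi(F)$ to be an upper bound. To verify $\psi$ is Tukey, let $\F \subseteq [\kappa]^{<\lambda}$ be unbounded, so $|\bigcup \F| \ge \lambda$; any upper bound of $\psi(\F)$ would dominate $\{p_\alpha : \alpha \in \bigcup \F\}$, which contains an unbounded $\lambda$-sized subset --- contradiction. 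The main obstacle is conceptual rather than technical: spotting the ``every $\lambda$-sized (resp. $\kappa$-sized) subset is unbounded'' upgrade of the witness, and recognising that $\add{P} \ge \lambda$ is exactly what is needed to promote the map from singletons to general $F \in [\kappa]^{<\lambda}$. After those two observations, the Tukey maps essentially design themselves.
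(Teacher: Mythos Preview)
The paper does not actually prove this lemma; it merely cites \cite{KM} and \cite{KS} for the two parts. Your argument is correct and is the standard one. The key observation you make explicit --- that a witness $P'$ to failure of calibre $(\kappa,\lambda,\lambda)$ automatically has \emph{every} $\lambda$-sized subset unbounded, since any bounded $\lambda$-sized subset would itself serve as the required $P''$ --- is exactly what drives both hard directions, and your use of $\add{P}\ge\lambda$ to make $\psi$ well-defined on $[\kappa]^{<\lambda}$ is the intended role of that hypothesis. One very minor point: in the hard direction of (2) you should, as in (1), take the enumeration $\{p_\alpha:\alpha<\kappa\}$ to be bijective, so that $|\bigcup\F|\ge\lambda$ genuinely yields a $\lambda$-sized subset of $P'$; this is implicit but worth stating.
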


\begin{lemma}
If $P$ is countably determined and has calibre $(\kappa, \lambda, \omega)$ then it has calibre $(\kappa,\lambda)$.
\end{lemma}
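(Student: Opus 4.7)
The proof will be a direct unpacking of the definitions; the countable determinacy hypothesis bridges exactly the gap between the two calibre conditions.

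The plan is to start with an arbitrary $P' \subseteq P$ of size $\kappa$ and produce the required $\lambda$-sized subset with the strong boundedness property. First, I would apply the hypothesis that $P$ has calibre $(\kappa,\lambda,\omega)$ to $P'$: this yields a subset $P'' \subseteq P'$ of size $\lambda$ such that every countable subset of $P''$ is bounded in $P$.

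Next, I would invoke countable determinacy of $P$. Recall the useful equivalent formulation given in the paper: $P$ is countably determined if whenever $S \subseteq P$ has all its countable subsets bounded, then $S$ itself is bounded. Applying this to $S = P''$ shows that $P''$ is bounded, say by some $p_\infty \in P$. But then every subset of $P''$ is bounded by $p_\infty$, and in particular every $\lambda$-sized subset of $P''$ is bounded. This is exactly the statement that $P$ has calibre $(\kappa,\lambda) = (\kappa,\lambda,\lambda)$.

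There is no real obstacle here: the argument is essentially tautological once the two definitions are placed side by side, and the entire content of the lemma is the observation that countable determinacy promotes "all countable subsets bounded" to "the whole set bounded," which is precisely the upgrade needed to pass from the third parameter $\omega$ to the third parameter $\lambda$.
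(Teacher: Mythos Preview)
Your proof is correct and essentially identical to the paper's own proof: take a $\kappa$-sized subset, extract a $\lambda$-sized subset with all countable subsets bounded, then use countable determinacy to conclude that this subset is itself bounded. The only difference is notational (the paper writes $S$ and $S_0$ where you write $P'$ and $P''$).
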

\begin{proof}
Take any $\kappa$-sized subset $S$ of $P$. By the calibre hypothesis there is a $\lambda$-sized subset $S_0$ of $S$ whose every countable subset has an upper bound. Since $P$ is countably determined we see that $S_0$ is bounded. Hence $P$ has calibre $(\kappa,\lambda,\lambda)$.
\end{proof}

\begin{cor}
If $\K(M)$, where $M$ is separable and metrizable, has calibre $(\kappa,\lambda,\omega)$ then it has caliber $(\kappa,\lambda)$.
\end{cor}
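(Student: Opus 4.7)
The plan is to observe that this corollary is immediate once one combines two facts already established in the paper. The preceding lemma states that any countably determined directed set with calibre $(\kappa,\lambda,\omega)$ has calibre $(\kappa,\lambda)$. So the only thing left to verify is that $\K(M)$ is countably determined whenever $M$ is separable metrizable.

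For this, I would invoke the earlier lemma that $\K(X)$ is strongly countably determined whenever $X$ is hereditarily separable. Since any separable metrizable space is hereditarily separable (separability is hereditary in the class of metrizable spaces), $\K(M)$ is strongly countably determined, hence in particular countably determined.

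The proof is therefore essentially a one-line appeal to the previous lemma applied to $P = \K(M)$. There is no real obstacle here: the content of the corollary is entirely carried by the two prior lemmas, and the role of the separable metrizable hypothesis on $M$ is solely to guarantee hereditary separability of $M$ so that the countable-determination lemma applies to $\K(M)$.
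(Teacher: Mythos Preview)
Your proposal is correct and matches the paper's intended argument exactly: the corollary is stated without proof in the paper precisely because it follows immediately by combining the preceding lemma (countably determined plus calibre $(\kappa,\lambda,\omega)$ implies calibre $(\kappa,\lambda)$) with the earlier lemma that $\K(X)$ is strongly countably determined for hereditarily separable $X$, applied to the separable metrizable (hence hereditarily separable) space $M$.
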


It is now well known (see \cite{KM}, for example) that $\K(M)$'s always have calibre $(\omega_1,\omega)$, and hence calibre $(\kappa, \omega)$ for all $\kappa > \omega$.
\begin{lemma}
If $M$ is separable metrizable then $\K(M)$ has calibre $(\omega_1, \omega)$
\end{lemma}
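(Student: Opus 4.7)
The plan is to use the fact that $\K(M)$, equipped with the Hausdorff metric induced by some metric on $M$, is itself a separable metric space, hence second countable. In any second countable space every uncountable subset contains uncountably many of its own condensation points, so from the given $\omega_1$-family one can extract a sequence that converges in Hausdorff distance to a member of the family. The common upper bound of that sequence will be the union of its terms together with the limit.

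First, fix a totally bounded metric $d$ on $M$ and let $\widetilde{M}$ be its completion, so that $\widetilde{M}$ is a compact metric space containing $M$; the induced Hausdorff metric $d_H$ makes $\K(\widetilde{M})$ compact and $\K(M)$ separable metric as a subspace. Given $\{K_\alpha : \alpha < \omega_1\} \subseteq \K(M)$, pick $K^*$ among the $K_\alpha$ that is a condensation point of the family. For each $n$, uncountably many $K_\alpha$ lie in the $d_H$-ball of radius $1/n$ around $K^*$, so one can extract an infinite sequence $(K_{\alpha_n})_n$ with $d_H(K_{\alpha_n}, K^*) \to 0$.

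The technical step is to verify that $L := K^* \cup \bigcup_n K_{\alpha_n}$ is compact; then $L \in \K(M)$ bounds every subfamily of $\{K_{\alpha_n}\}$, and calibre $(\omega_1,\omega)$ follows. Total boundedness of $L$ in $\widetilde{M}$ is easy: for each $\varepsilon > 0$, only finitely many $K_{\alpha_n}$ lie outside the $\varepsilon$-neighbourhood of $K^*$, and a finite $\varepsilon$-net for $K^*$ together with one for the remaining finitely many (compact) exceptions gives a $2\varepsilon$-net for $L$. Closedness of $L$ in $\widetilde{M}$ follows from a case analysis on a limit point $x$ approached by $x_k \in L$: either a tail of the $x_k$ lies in a single closed set $K^*$ or $K_{\alpha_n}$, in which case $x$ lies there; or the indices $n_k$ tend to infinity, in which case $d(x_k, K^*) \le d_H(K_{\alpha_{n_k}}, K^*) \to 0$ and $x \in K^*$. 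Completeness of $\widetilde{M}$ then yields $L$ compact in $\widetilde{M}$, and since $L \subseteq M$, also compact in $M$.

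The main obstacle is that $\K(M)$ need not be closed in $\K(\widetilde{M})$: Hausdorff limits of compacta in $M$ can escape $M$ (as with singletons $\{1/n\}$ approaching $\{0\}$ inside the compactification of $(0,1)$), so one cannot simply invoke compactness of $\K(\widetilde{M})$ and pick any convergent subsequence. Choosing the limit $K^*$ to be a condensation point already in the family is what pins the limit inside $M$ and, combined with completeness of $\widetilde{M}$, upgrades total boundedness of $L$ to genuine compactness within $\K(M)$.
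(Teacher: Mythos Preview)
Your proof is correct. The paper does not actually prove this lemma; it merely states it as ``well known'' and refers to \cite{KM}. Your argument via condensation points in the second-countable space $\K(M)$ is a clean and standard route: the key observation---that all but countably many members of an uncountable family in a second-countable space are condensation points \emph{of the family itself}---lets you choose the Hausdorff limit $K^*$ inside $\K(M)$, sidestepping the obstacle you correctly identify (that $\K(M)$ need not be closed in $\K(\widetilde M)$). The verification that $L=K^*\cup\bigcup_n K_{\alpha_n}$ is closed and totally bounded in the complete space $\widetilde M$, hence compact, and lies in $M$, is carried out carefully. One small point worth making explicit: when selecting the $K_{\alpha_n}$ you should choose them pairwise distinct (possible since each ball about $K^*$ contains uncountably many $K_\alpha$), so that the resulting bounded subfamily genuinely has size $\omega$.
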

So we are left with two questions. First when does $\K(M)$ have calibre $(\kappa, \lambda)$ where $\kappa > \lambda > \omega$? This is discussed in \cite{DoMa}. The second is when does $\K(M)$ have calibre $\kappa$? Or more broadly for which $\kappa$ does $\K(M)$ have calibre $\kappa$? Since for many $\kappa$ -- for example all $\kappa > |\K(M)|$ -- $\kappa$ is vacuously a calibre, it turns out to be more convenient to investigate which $\kappa$ are \emph{not} calibres.

\subsection{Spectra}
The \emph{spectrum}, $\spec{P}$, of a directed set $P$, is the set of all infinite regular $\kappa$ which are not calibres of $\kappa$. Equivalently, $\spec{P}=\{ \kappa \ge \omega: \kappa \text{ is a regular cardinal and }  P\tr \kappa  \}$. The productivity of calibre $\kappa$  and transitivity of $\leq_T$ give the following facts about the spectrum. 
\begin{lemma}\label{prod_spec}
(1) $\spec{P_1 \times P_2} = \spec{P_1} \cup \spec{P_2}$. 
(2) If $Q\leq_T P$ then $\spec{Q} \subseteq \spec{P}$.
\end{lemma}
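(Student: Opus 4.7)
For part~(2), the statement is essentially the contrapositive of Lemma~\ref{calibregoesdown} combined with the characterization in Lemma~\ref{caltuk}(1). Concretely, if $\kappa \in \spec{Q}$ then $Q \tq \kappa$; since $P \tq Q$, transitivity of the Tukey order gives $P \tq \kappa$, so $\kappa \in \spec{P}$.

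For part~(1), the inclusion $\spec{P_1} \cup \spec{P_2} \subseteq \spec{P_1 \times P_2}$ follows from part~(2) once we observe that the coordinate projections $\pi_i : P_1 \times P_2 \to P_i$ are order-preserving and surjective (hence cofinal), so $P_1 \times P_2 \tq P_i$ for $i=1,2$.

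For the reverse inclusion, I will prove the contrapositive: if $\kappa$ is a regular cardinal and is a calibre of each of $P_1$ and $P_2$, then $\kappa$ is a calibre of $P_1 \times P_2$. Fix a $\kappa$-sized subset $S$ of $P_1 \times P_2$, which we may take to consist of distinct pairs. Examine $\pi_1(S)\subseteq P_1$ and split into two cases. If $|\pi_1(S)| < \kappa$, then by regularity of $\kappa$ some fibre $\pi_1^{-1}(p_1) \cap S$ has size $\kappa$; its image under $\pi_2$ has size $\kappa$ (distinct pairs with the same first coordinate have distinct second coordinates), so calibre $\kappa$ of $P_2$ provides a $\kappa$-sized subset bounded by some $q_2 \in P_2$, and the corresponding elements of $S$ are bounded above by $(p_1,q_2)$. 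If instead $|\pi_1(S)| = \kappa$, calibre $\kappa$ of $P_1$ gives a $\kappa$-sized bounded subset $T_1 \subseteq \pi_1(S)$; lift it to $S_1 \subseteq S$ by choosing a preimage for each element of $T_1$. Then $|S_1|=\kappa$, and one applies the same dichotomy to $\pi_2(S_1)$: either a further fibre of size $\kappa$ is bounded in the first coordinate by an upper bound of $T_1$, or calibre $\kappa$ of $P_2$ yields a $\kappa$-sized bounded set in the second coordinate; in both sub-cases we extract a $\kappa$-sized subset of $S_1$ that is bounded in both coordinates, hence bounded in $P_1 \times P_2$.

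The whole argument is routine; the only point requiring care is the bookkeeping in the case analysis, and the essential ingredient is the regularity of $\kappa$, which lets us upgrade a ``small image'' to a ``large fibre'' whenever a projection fails to preserve size.
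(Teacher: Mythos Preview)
Your proof is correct and follows exactly the route the paper indicates: the paper does not give a detailed argument but simply states that the lemma follows from ``the productivity of calibre $\kappa$ and transitivity of $\le_T$,'' and your proposal spells out precisely these two ingredients. The case analysis you give for productivity of calibre $\kappa$ under finite products is the standard one and goes through without issue.
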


We write $[\lambda,\mu]^r$ for $\{\kappa : \kappa \text{ is regular and } \lambda \le \kappa \le \mu\}$. 

\begin{lemma}\label{add_cof_spec}
Let $P$ be a directed poset without the largest element. Then \\
$\add{P}$ and $\cof{\cof{P}}$ are elements of $\spec{P}$ and $\spec{P} \subseteq [\add{P}, \cof{P}]^r$.
\end{lemma}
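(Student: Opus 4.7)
The plan is to split the statement into three independent claims and handle each. The containment $\spec{P} \subseteq [\add{P}, \cof{P}]^r$ is immediate from the preservation laws recalled in the introduction of Section~2: a Tukey quotient $P \ge_T Q$ forces $\add{P} \le \add{Q}$ and $\cof{Q} \le \cof{P}$. Applied to any $\kappa \in \spec{P}$ (so $\kappa$ regular and $P \ge_T \kappa$), this yields $\add{P} \le \add{\kappa} = \kappa = \cof{\kappa} \le \cof{P}$, which is exactly the required inclusion.

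The two remaining claims both amount to exhibiting Tukey maps into $P$, and will follow the same template: index a family of elements of $P$ by the target cardinal, then use the minimality baked into $\add{P}$ or $\cof{P}$ to rule out bounded images of cofinal sets of indices. For $\add{P} \in \spec{P}$, let $\kappa = \add{P}$ and fix an unbounded set $\{s_\alpha : \alpha < \kappa\}$. For each $\alpha < \kappa$ the initial segment $\{s_\beta : \beta < \alpha\}$ has size strictly less than $\add{P}$ and hence has an upper bound $t_\alpha$; the map $\psi : \kappa \to P$, $\alpha \mapsto t_\alpha$, is Tukey, because a bound $p$ for $\psi(A)$ with $A \subseteq \kappa$ cofinal would satisfy $s_\beta \le t_\alpha \le p$ whenever $\beta < \alpha \in A$, so by cofinality of $A$ we get $s_\beta \le p$ for every $\beta < \kappa$, contradicting unboundedness of $\{s_\alpha\}$.

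For $\cof{\cof{P}} \in \spec{P}$, set $\lambda = \cof{P}$ and $\mu = \cof{\lambda}$, enumerate a cofinal set $C = \{c_\beta : \beta < \lambda\}$ in $P$, and fix a cofinal sequence $(\lambda_\alpha)_{\alpha < \mu}$ in $\lambda$. Each initial segment $C_\alpha = \{c_\beta : \beta < \lambda_\alpha\}$ has cardinality strictly less than $\cof{P}$, so $C_\alpha$ is not cofinal in $P$; choose a witness $p_\alpha \in P$ with $p_\alpha \not\le c$ for every $c \in C_\alpha$. The map $\psi : \mu \to P$, $\alpha \mapsto p_\alpha$, is Tukey: a bound $q$ for $\psi(A)$, $A \subseteq \mu$ cofinal, would be dominated by some $c_\gamma \in C$ (since $C$ is cofinal in $P$); choosing $\alpha \in A$ with $\gamma < \lambda_\alpha$ (possible by cofinality of the $\lambda_\alpha$'s in $\lambda$ combined with cofinality of $A$ in $\mu$) puts $c_\gamma$ inside $C_\alpha$, so $p_\alpha \le q \le c_\gamma$ contradicts the choice of $p_\alpha$.

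No step is genuinely hard; the only delicate point is bookkeeping. In the second Tukey map, one must be careful that ``not cofinal'' is meant in $P$ rather than merely in $C$, and this is guaranteed precisely because $\cof{P}$ is the minimum size of \emph{any} cofinal subset of $P$, not only of subsets of $C$. A parallel remark applies to the additivity case. Both arguments implicitly require $P$ to have no largest element (to ensure cofinal subsets of $P$ are unbounded and that $\add{P}$, $\cof{P}$ are well-defined infinite cardinals), which is precisely the hypothesis of the lemma.
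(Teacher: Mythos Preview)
Your proof is correct. The arguments for the containment $\spec{P} \subseteq [\add{P},\cof{P}]^r$ and for $\add{P} \in \spec{P}$ are essentially the paper's (the paper rearranges the unbounded set into a strictly increasing sequence rather than passing to a new sequence of upper bounds, but the idea is identical).

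For $\cof{\cof{P}} \in \spec{P}$ you take a genuinely different route. The paper does not build a Tukey map into $P$ at all: it instead exhibits a Tukey \emph{quotient} $\phi : P \to \cof{P}$ by fixing a cofinal set $\{p_\alpha : \alpha < \cof{P}\}$ and sending each $p$ to some $\alpha$ with $p_\alpha \ge p$; cofinality of the image follows from minimality of $\cof{P}$. Then it invokes $\cof{P} =_T \cof{\cof{P}}$. Your approach reaches the regular cardinal $\mu = \cof{\cof{P}}$ directly via a Tukey map, exploiting that proper initial segments of a minimal cofinal set cannot themselves be cofinal. Both are short; the paper's version has the small bonus of giving the intermediate (and slightly stronger) fact $P \ge_T \cof{P}$ even when $\cof{P}$ is singular, while yours is more self-contained and avoids appealing to the Tukey equivalence $\lambda =_T \cof{\lambda}$.
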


\begin{proof}
To show that $\add{P} \leq_T P$, pick an unbounded subset of $P$ of size $\add{P}$, say $U = \{ u_\alpha : \alpha < \add{P}  \}$. Since each subsets of $P$ of size $<\add{P}$ is bounded, we can arrange for $u_\beta$ to be an upper bound  for $\{ u_\alpha : \alpha < \beta \}$ for each $\beta < \add{P}$. Moreover, since $P$ has no largest element we can ensure that $u_\beta$ is \emph{strictly} larger than each element of $\{ u_\alpha : \alpha < \beta \}$. Now $U$ is a strictly increasing unbounded set and therefore every subset of $U$ of size $\add{P}$ is also unbounded. Then the map $\psi : \add{P} \to P$ defined by $\psi (\beta) = u_\beta$ is a Tukey map.

To show that $\cof{\cof{P}} \leq_T P$ it suffices to show that $\cof{P} \leq_T P$ since $\cof{P} =_T \cof{\cof{P}}$. Let $\{p_\alpha: \alpha < \cof{P}\}$ be a cofinal subset of $P$ and define $\phi : P \to \cof{P}$ by setting $\phi(p)$ to equal some $\alpha$ such that $p_\alpha \geq p$. Then whenever $C$ is a cofinal subset of $P$, the set $\{ p_\alpha : \alpha \in \phi(C)  \}$ is also cofinal in $P$ and therefore has size at least $\cof{P}$. This implies that $\phi(C)$ has size $\cof{P} $ and therefore must be cofinal in $\cof{P}$.

Lastly, to show that $\spec{P} \subseteq [\add{P}, \cof{P}]^r$, suppose $P\tr \kappa$ and $\kappa$ is regular. Then we know that $\add{P} \leq \add{\kappa}$ and $\cof{\kappa} \leq \cof{P}$. Then the fact that $\add{\kappa} = \cof{\kappa}$ finishes the proof. 
\end{proof}

Since $\K(\omega^\omega) \te \omega^\omega$ we see $\spec{\omega^\omega} = \spec{\K(\omega^\omega)}$. So by Lemma~\ref{not_loc_cpt} the spectrum of most $\K(M)$'s contains the spectrum of $\omega^\omega$.

\begin{lemma}\label{NLC_spec} Let $M$ be separable metrizable.
If $M$ is compact then $\spec{\K(M)}=\emptyset$. If $M$ is locally compact, but not compact then $\spec{\K(M)}=\{\omega\}$.
While if $M$ is not locally compact, then $\spec{\omega^\omega} \subseteq \spec{\K(M)}$. 
\end{lemma}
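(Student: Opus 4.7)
My plan is to handle the three cases of the dichotomy in turn, each as a quick consequence of facts already in the paper. The unifying tool is monotonicity of the spectrum under $\leq_T$ (Lemma~\ref{prod_spec}(2)), which lets me replace $\K(M)$ by any Tukey-equivalent or Tukey-dominated directed set.

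First I would dispose of the compact case: if $M$ is compact then $M$ itself is the maximum of $\K(M)$, so every subset of $\K(M)$ is bounded. Consequently, for any regular $\kappa$ and any $\kappa$-sized $P'\subseteq\K(M)$, the choice $P''=P'$ witnesses calibre $\kappa$ (a bounded set has all its subsets bounded). Hence no regular cardinal lies in $\spec{\K(M)}$.

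Next I would treat the locally compact, non-compact case. A standard exhausting sequence $K_0\subseteq K_1\subseteq\dots$ of compact sets with $K_n\subseteq\mathrm{int}(K_{n+1})$ and $\bigcup_nK_n=M$ establishes $\K(M)\te\omega$, as already noted in the introduction. By Lemma~\ref{prod_spec}(2) it suffices to compute $\spec{\omega}$. The identity shows $\omega\in\spec{\omega}$; while for any regular $\kappa>\omega$, a Tukey quotient $\omega\to\kappa$ would have countable cofinal image in $\kappa$, contradicting $\cof{\kappa}=\kappa$. Hence $\spec{\omega}=\{\omega\}$, and $\spec{\K(M)}=\{\omega\}$.

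The non-locally-compact case is immediate: Lemma~\ref{not_loc_cpt} gives $\omega^\omega\leq_T\K(M)$, and a further application of Lemma~\ref{prod_spec}(2) yields $\spec{\omega^\omega}\subseteq\spec{\K(M)}$. I do not expect a genuine obstacle at any point; the whole statement is a bookkeeping consequence of monotonicity of the spectrum under $\leq_T$ together with the well-known Tukey classification of $\K(M)$ for compact and locally compact $M$ and the elementary calculation $\spec{\omega}=\{\omega\}$.
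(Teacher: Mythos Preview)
Your proposal is correct and matches the paper's approach: the paper does not give an explicit proof of this lemma, treating it as immediate from the sentence just before (``So by Lemma~\ref{not_loc_cpt} the spectrum of most $\K(M)$'s contains the spectrum of $\omega^\omega$'') together with the initial-structure facts $\K(M)\te 1$, $\te\omega$, $\te\omega^\omega$ recalled in the introduction. Your write-up simply makes these implicit steps explicit via Lemma~\ref{prod_spec}(2).
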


Combining the upper and lower bounds given by Theorem~\ref{TI_bounds}, and the preceding lemmas  we deduce the spectrum  for some totally imperfect separable metrizable spaces. 

\begin{prop}\label{omega1inQ} Suppose $B_\kappa$ is $\kappa$-sized totally imperfect separable metrizable. Then $[\omega_1, \kappa]^r\cup \spec{\omega^\omega} \subseteq \spec{\K(B_\kappa)}$. If $\kappa = \aleph_n$ for some $n\in \omega$, then  $[\omega_1, \kappa]^r\cup \spec{\omega^\omega} = \spec{\K(B_\kappa)}$.
In particular, $\spec{\K(\Q)} = \{\omega_1\} \cup \spec{\omega^\omega}$.
\end{prop}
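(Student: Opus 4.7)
The plan is to read the spectrum directly from the Tukey bounds in Theorem~\ref{TI_bounds}, using Lemmas~\ref{prod_spec}, \ref{caltuk}, and \ref{add_cof_spec}. Following the convention of Theorem~\ref{TI_bounds}, I will treat the cardinal $\kappa$ as $\max(\omega_1,|B_\kappa|)$, so that the $\Q$-case sits inside the $\kappa=\aleph_1$ instance.

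For the lower containment $[\omega_1,\kappa]^r\cup\spec{\omega^\omega}\subseteq\spec{\K(B_\kappa)}$, I would first observe that a non-scattered totally imperfect separable metrizable space contains a copy of $\Q$ and is therefore not locally compact; Lemma~\ref{not_loc_cpt} then gives $\omega^\omega\le_T\K(B_\kappa)$, and Lemma~\ref{prod_spec}(2) delivers $\spec{\omega^\omega}\subseteq\spec{\K(B_\kappa)}$. For a regular $\lambda$ with $\omega_1\le\lambda\le\kappa$, Theorem~\ref{TI_bounds}(1) supplies $\lambda\times\omega^\omega\le_T\K(B_\kappa)$, in particular $\lambda\le_T\K(B_\kappa)$, and Lemma~\ref{caltuk}(1) places $\lambda$ into the spectrum.

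For the reverse containment in the $\kappa=\aleph_n$ case, the strategy is to use the upper bound $\K(B_\kappa)\le_T\omega^\omega\times[\kappa]^{<\omega}$ from Theorem~\ref{TI_bounds}(2) together with Lemma~\ref{prod_spec}(1), which reduces the problem to computing $\spec{[\kappa]^{<\omega}}$. Here Lemma~\ref{add_cof_spec} confines the spectrum to $[\omega,\kappa]^r$, since $\add{[\kappa]^{<\omega}}=\omega$ and $\cof{[\kappa]^{<\omega}}=\kappa$. Conversely, for each regular $\lambda$ in $[\omega,\kappa]$, the $\lambda$-sized family of singletons $\{\{\alpha\}:\alpha<\lambda\}$ witnesses the failure of calibre $\lambda$, since every bounded subset of $[\kappa]^{<\omega}$ is contained in a finite set and so is itself finite. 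Hence $\spec{[\kappa]^{<\omega}}=[\omega,\kappa]^r$, and because $\omega\in\spec{\omega^\omega}$ (we have $\add{\omega^\omega}=\omega$), the desired inclusion $\spec{\K(B_\kappa)}\subseteq[\omega_1,\kappa]^r\cup\spec{\omega^\omega}$ follows. The $\Q$ statement is then just the $n=1$ instance, since $\max(\omega_1,|\Q|)=\omega_1=\aleph_1$.

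There is no real obstacle here; the whole argument is a short synthesis of previously established results. The only step requiring any thought is the spectrum computation for $[\kappa]^{<\omega}$, which reduces immediately to the fact that bounded subsets of $[\kappa]^{<\omega}$ are finite.
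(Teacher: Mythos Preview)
Your proposal is correct and follows exactly the route the paper indicates: the proposition is stated there without proof, as an immediate consequence of Theorem~\ref{TI_bounds} together with the spectrum lemmas (Lemmas~\ref{prod_spec}, \ref{caltuk}, \ref{add_cof_spec}, and Lemma~\ref{NLC_spec}). Your explicit computation of $\spec{[\kappa]^{<\omega}}=[\omega,\kappa]^r$ and your convention $\kappa=\max(\omega_1,|B_\kappa|)$ (needed to fit $\Q$ into the $\aleph_1$ case) are precisely the details the paper leaves implicit.
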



 In light of Lemma~\ref{NLC_spec} and Proposition~\ref{omega1inQ} we now turn to computing $\spec{\omega^\omega}$. The next lemma reduces the problem to calculating the spectrum of $(\omega^\omega,\le^*)$.
 
\begin{lemma}
(1) $\omega^\omega \tr (\omega^\omega, \leq^*)$. 
(2) $\spec{\omega^\omega} = \spec{\omega^\omega,\leq^*} \cup \{\omega\}$. 
\end{lemma}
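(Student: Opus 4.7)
\medskip

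\noindent\emph{Proof plan.} For part~(1) the proposal is to use the identity map $\phi:(\omega^\omega,\le)\to(\omega^\omega,\le^*)$. Since $\sigma\le\tau$ pointwise implies $\sigma\le^*\tau$, this $\phi$ is order preserving, and its image is all of $\omega^\omega$, which is trivially cofinal in $(\omega^\omega,\le^*)$. That suffices.

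For part~(2), one inclusion follows immediately from part~(1) and Lemma~\ref{prod_spec}(2), giving $\spec{\omega^\omega,\le^*}\subseteq \spec{\omega^\omega}$. Also $\omega\in \spec{\omega^\omega}$: the sequence of constant functions $(n\cdot\mathbf 1)_{n\in\omega}$ is a countable $\le$-unbounded subset of $\omega^\omega$, so $\add{\omega^\omega}=\omega$, and Lemma~\ref{add_cof_spec} then puts $\omega$ in the spectrum.

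The substantive direction is the reverse: I would show that any uncountable regular $\kappa\in\spec{\omega^\omega}$ lies in $\spec{\omega^\omega,\le^*}$. By Lemma~\ref{caltuk}(1) applied to $(\omega^\omega,\le)$, fix a $\kappa$-sized $S\subseteq\omega^\omega$ no $\kappa$-sized subset of which is pointwise bounded. I will argue that the same $S$ witnesses the failure of calibre $\kappa$ in $(\omega^\omega,\le^*)$. Suppose for contradiction that some $\kappa$-sized $T\subseteq S$ is $\le^*$-bounded by some $\tau\in\omega^\omega$; for each $\sigma\in T$ pick $N_\sigma\in\omega$ with $\sigma(n)\le\tau(n)$ for $n\ge N_\sigma$. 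Because $\kappa$ is regular and uncountable, a first pigeonhole step on the countably many possible values of $N_\sigma$ yields a $\kappa$-sized $T_N\subseteq T$ on which $N_\sigma$ takes a single value $N$. A second pigeonhole step on the countable set $\omega^N$ of possible initial segments $(\sigma(0),\ldots,\sigma(N-1))$ extracts a $\kappa$-sized $T'\subseteq T_N$ on which these initial segments are all equal to some fixed $(a_0,\ldots,a_{N-1})$. Then the function $\tau'$ defined by $\tau'(n)=a_n$ for $n<N$ and $\tau'(n)=\tau(n)$ for $n\ge N$ pointwise bounds $T'$, contradicting the choice of $S$.

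The main (and really the only) obstacle is the pigeonhole argument above, which uses regularity of $\kappa$ twice to absorb the two countable parameters that distinguish $\le^*$ from $\le$; once that is in place, the equality $\spec{\omega^\omega}=\spec{\omega^\omega,\le^*}\cup\{\omega\}$ drops out.
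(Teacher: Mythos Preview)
Your proof is correct and follows essentially the same route as the paper. For (1) both use the identity/quotient map; for (2) the paper composes a Tukey map $\psi:\kappa\to(\omega^\omega,\le)$ with $f\mapsto[f]$ and appeals to the countability of $=^*$-equivalence classes (equivalently, that $\{f:f\le^*\tau\}=\bigcup_{g=^*\tau}\{f:f\le g\}$ is a countable union of $\le$-bounded sets), which is exactly what your explicit double pigeonhole unpacks.
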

\begin{proof}
Notice that the function defined by $f\mapsto [f]$ (the equivalence class of $f$ in $(\omega^\omega, \leq^*)$) is order preserving and cofinal. So $\omega^\omega \tr (\omega^\omega, \leq^*)$ and therefore $\spec{\omega^\omega,\leq^*} \subseteq \spec{\omega^\omega}$. 
On the other hand, suppose $\kappa \leq_T \omega^\omega$ and $\kappa$ is regular and uncountable. Let $\psi : \kappa \to \omega^\omega$ be a Tukey map. Define $\psi^* : \kappa \to (\omega^\omega,\le^*)$ by $\psi^*(\alpha)=[\psi (f)]$. Then because each equivalence class is countable while $\kappa$ is uncountable and regular, it is is easy to verify $\psi^*$ is a Tukey map.
\end{proof}

By Lemma~\ref{add_cof_spec}, we have the following corollary. 

\begin{cor} The cardinals 
$\omega, \mathfrak{b}$ and $\cof{\mathfrak{d}}$ are elements of $\spec{\omega^\omega}$, and $\spec{\omega^\omega} \subseteq \{\omega \} \cup [\mathfrak{b}, \mathfrak{d}]^r$.
\end{cor}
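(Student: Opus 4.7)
The plan is to reduce the corollary to Lemma~\ref{add_cof_spec} applied to the directed set $(\omega^\omega, \le^*)$, and then lift the conclusion to $\omega^\omega$ via the immediately preceding lemma.

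First, I would observe that by definition $\mathfrak{b} = \add{\omega^\omega, \le^*}$ and $\mathfrak{d} = \cof{\omega^\omega, \le^*}$, and that $(\omega^\omega, \le^*)$ has no maximum element (for any $[f]$, the class $[f+1]$ is strictly larger in $\le^*$). Hence Lemma~\ref{add_cof_spec}, applied to $P = (\omega^\omega, \le^*)$, yields at once that $\mathfrak{b}, \cof{\mathfrak{d}} \in \spec{\omega^\omega, \le^*}$ and $\spec{\omega^\omega, \le^*} \subseteq [\mathfrak{b}, \mathfrak{d}]^r$.

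Next I would invoke the preceding lemma, which gives $\spec{\omega^\omega} = \spec{\omega^\omega, \le^*} \cup \{\omega\}$. Combined with the previous paragraph, this yields both $\mathfrak{b}, \cof{\mathfrak{d}} \in \spec{\omega^\omega}$ and the containment $\spec{\omega^\omega} \subseteq \{\omega\} \cup [\mathfrak{b}, \mathfrak{d}]^r$. Finally, $\omega \in \spec{\omega^\omega}$ follows either from the fact noted in the preliminaries that $\add{\omega^\omega} = \omega$ (and again Lemma~\ref{add_cof_spec}, since $\omega^\omega$ has no maximum) or simply from the trivial observation that the identity-like map $\omega \to \omega^\omega$ sending $n \mapsto n \cdot \mathbf{1}$ (the constant sequence) is a Tukey map, so $\omega \tr \omega^\omega$.

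There is no real obstacle: the work is just the bookkeeping of matching invariants, and the only point worth double-checking is that $(\omega^\omega, \le^*)$ has no top element, which is needed to apply Lemma~\ref{add_cof_spec} to it.
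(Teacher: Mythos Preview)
Your argument is correct and is exactly the route the paper intends: apply Lemma~\ref{add_cof_spec} to $(\omega^\omega,\le^*)$ and then use the immediately preceding lemma $\spec{\omega^\omega}=\spec{\omega^\omega,\le^*}\cup\{\omega\}$. One small slip: in your final sentence the Tukey map $n\mapsto n\cdot\mathbf{1}$ from $\omega$ into $\omega^\omega$ witnesses $\omega^\omega \ge_T \omega$, not ``$\omega \tr \omega^\omega$'' (which in the paper's notation would read $\omega \ge_T \omega^\omega$); this is harmless since that clause is redundant anyway.
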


As $\mathfrak{b}$ is the least uncountable element of $\spec{\omega^\omega}$ we immediately have the following: 
\begin{cor}
The cardinal $\omega_1$ is in the spectrum of $\omega^\omega$ if and only if  $\omega_1 = \mathfrak{b}$.
\end{cor}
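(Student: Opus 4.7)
The plan is to deduce this corollary directly from the preceding corollary, which asserts $\mathfrak{b} \in \spec{\omega^\omega}$ and $\spec{\omega^\omega} \subseteq \{\omega\} \cup [\mathfrak{b},\mathfrak{d}]^r$. Both directions are essentially one-line consequences, so the proof will be very short.

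For the forward direction, suppose $\omega_1 = \mathfrak{b}$. Then by the previous corollary $\omega_1 = \mathfrak{b} \in \spec{\omega^\omega}$, and we are done.

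For the reverse direction, suppose $\omega_1 \in \spec{\omega^\omega}$. Since $\omega_1 \ne \omega$, the inclusion $\spec{\omega^\omega} \subseteq \{\omega\} \cup [\mathfrak{b},\mathfrak{d}]^r$ forces $\mathfrak{b} \le \omega_1$. Combined with the standard ZFC fact that $\mathfrak{b}$ is uncountable (which follows, for instance, since the diagonalization argument shows any countable subset of $\omega^\omega$ is bounded in $\le^*$, so $\add{\omega^\omega,\le^*} = \mathfrak{b} > \omega$), we conclude $\mathfrak{b} = \omega_1$.

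There is no real obstacle: everything needed is packaged in the earlier corollary together with the well-known lower bound $\mathfrak{b} > \omega$. The only thing worth flagging in the write-up is explicitly invoking (or reminding the reader of) $\omega_1 \le \mathfrak{b}$, so that the equality $\mathfrak{b} = \omega_1$ follows by antisymmetry from $\mathfrak{b} \le \omega_1$.
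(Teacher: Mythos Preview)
Your proof is correct and follows the same route as the paper, which simply remarks that $\mathfrak{b}$ is the least uncountable element of $\spec{\omega^\omega}$ (a restatement of the previous corollary) and leaves the rest implicit. One cosmetic point: your labeling of ``forward'' and ``reverse'' is swapped relative to the usual reading of an ``if and only if,'' but both implications are established correctly.
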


Taking $P=(\omega^\omega,\le^*)$ and $R=\omega^\omega = (\omega^\omega,\le)$ in Proposition~\ref{PQR_to_QR}:
\begin{lemma}
If $(\omega^\omega,\le^*) \times Q \tr \omega^\omega$, then $Q \tr \omega^\omega$. 
In particular, $\cof{Q} \ge \mathfrak{d}$, and $(\omega^\omega,\le^*) \times \omega \not\tr \omega^\omega$. 
\end{lemma}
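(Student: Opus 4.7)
The plan is to apply Proposition~\ref{PQR_to_QR} directly with $P = (\omega^\omega, \le^*)$ and $R = \omega^\omega = (\omega^\omega, \le)$, so the work reduces to verifying that the hypotheses on $P$ and $R$ hold. The main claim is then immediate, and the ``in particular'' consequences follow by a cofinality argument.

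First I would check that $\add{(\omega^\omega,\le^*)} = \mathfrak{b} > \omega$, which is standard. Next I would verify that $R = \omega^\omega$ (under the coordinatewise order) is Dedekind complete: any bounded $S \subseteq \omega^\omega$ has $\bigvee S = (n \mapsto \sup\{f(n) : f \in S\})$, which is well defined coordinatewise since boundedness in $\omega^\omega$ is equivalent to each coordinate projection being finite.

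The only step requiring real verification is strong countable determinedness of $\omega^\omega$. I would argue as follows: given $S \subseteq \omega^\omega$ with every countable subset bounded, observe that if $S$ were unbounded, some coordinate projection $\{f(n) : f \in S\}$ would be infinite, and then a countable subset of $S$ witnessing that infinitude would be unbounded -- contradiction. So $S$ is bounded, establishing countable determinedness. For the ``strong'' part, if $S$ is bounded then for each $n$ pick $f_n \in S$ maximizing the $n$th coordinate (possible since $\{f(n) : f \in S\}$ is finite); set $S_0 = \{f_n : n \in \omega\}$. Any upper bound $g$ of $S_0$ satisfies $g(m) \ge f_m(m) = \max\{f(m) : f \in S\}$ for every $m$, so $g$ bounds $S$. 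Equivalently, $\bigvee S_0 = \bigvee S$.

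With the hypotheses verified, Proposition~\ref{PQR_to_QR} gives that $(\omega^\omega,\le^*) \times Q \tr \omega^\omega$ implies $Q \tr \omega^\omega$. For the corollaries: $Q \tr \omega^\omega$ forces $\cof{Q} \ge \cof{\omega^\omega} = \mathfrak{d}$, since Tukey quotients do not increase cofinality. Finally, if we had $(\omega^\omega,\le^*) \times \omega \tr \omega^\omega$, the main claim would give $\omega \tr \omega^\omega$, hence $\mathfrak{d} \le \cof{\omega} = \omega$, contradicting $\mathfrak{d} \ge \aleph_1$. No step here looks like a genuine obstacle; the only mild subtlety is getting the strong countable determinedness of $\omega^\omega$ right, and that is essentially the coordinate-by-coordinate argument above.
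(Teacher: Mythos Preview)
Your proposal is correct and follows exactly the paper's approach: apply Proposition~\ref{PQR_to_QR} with $P=(\omega^\omega,\le^*)$ and $R=(\omega^\omega,\le)$. The paper simply asserts the hypotheses hold, whereas you have supplied the (correct) verifications that $\add{P}=\mathfrak{b}>\omega$ and that $\omega^\omega$ is Dedekind complete and strongly countably determined, together with the routine corollaries.
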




Now we consider the possibilities for the spectrum of $(\omega^\omega, \leq^*)$. Recall $\mathfrak{b}$ is regular, $\mathfrak{b} \le \cof{\mathfrak{d}} \le \mathfrak{d}$, and $\mathfrak{d}$ need not be regular. We look at what can happen in the interval $[\mathfrak{b},\mathfrak{d}]^r$. As with our discussion of cofinality above, the answer is closely tied to Shelah's {\small PCF} theory. The basic definitions are as follows. As a reference see \cite{BM}, for example. 
Let $P$ be any directed set. Then $P$ has \emph{true cofinality} $\kappa$ if $P$ has a cofinal subset order isomorphic to $\kappa$; hence if  $P$ has true cofinality $\kappa$ then $P \te \kappa$. Let $S$ be a set of regular cardinals. Let $I$ be an ideal on $S$.  Recall $\pcf{S}=\{ \kappa : \exists \text{ ideal } I \text{ such that }  \kappa$ is the true cofinality of $\prod S/I\}$. Since for any ideal $I$ we evidently have $\prod S \tq \prod S / I$, we see by the definition of spectrum:

\begin{lemma}\label{pcf} For any set $S$ of regular cardinals, 
 $\pcf{S} \subseteq \spec{\prod S}$.
\end{lemma}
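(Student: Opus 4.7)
The plan is to unwind the definitions and compose two Tukey quotients. Suppose $\kappa \in \pcf{S}$, so by definition there is an ideal $I$ on $S$ such that $\kappa$ is the true cofinality of $\prod S / I$. Since $\kappa$ is the true cofinality of a directed set it is automatically a regular cardinal, and moreover $\prod S / I \te \kappa$ (by definition of true cofinality, $\prod S / I$ has a cofinal subset order-isomorphic to $\kappa$, which gives mutual Tukey quotients between $\prod S / I$ and $\kappa$).

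Next I would justify the parenthetical remark in the text, namely that $\prod S \tq \prod S / I$. This is witnessed by the quotient map $\pi : \prod S \to \prod S / I$ sending $f$ to its $I$-equivalence class $[f]$. This map is order preserving (if $f \le g$ pointwise then $[f] \le [g]$ modulo $I$) and surjective, hence its image is trivially cofinal in $\prod S / I$; so $\pi$ is a Tukey quotient.

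Composing the two quotients gives $\prod S \tq \prod S / I \te \kappa$, so $\prod S \tq \kappa$. By the definition of spectrum recalled just above the lemma, this says exactly $\kappa \in \spec{\prod S}$. No step here is hard; the only thing to be careful about is noting that true cofinalities are regular, so that the resulting $\kappa$ genuinely belongs to the spectrum (which is defined using regular cardinals only).
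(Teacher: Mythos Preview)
Your proof is correct and is exactly the argument the paper has in mind: the paper does not even give a separate proof, merely noting in the sentence preceding the lemma that $\prod S \tq \prod S/I$ for any ideal $I$, from which the inclusion follows by the definitions of true cofinality and spectrum. Your write-up simply makes this explicit.
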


A set $S$ of cardinals is \emph{progressive} if $\min (S) > |S|$; and is \emph{almost progressive} if it is a finite union of progressive subsets.  
We now show -- provided $S$ is almost progressive -- that the converse is true: if $\prod S \tq \kappa$ then this can be realized in a specific fashion, namely there is an ideal $I$ on $S$ such that $\prod S/I$ has true cofinality $\kappa$, and so $\prod S \tq \prod S /I \te \kappa$.

Any countable family of uncountable cardinals is progressive.
Hence, if removing a countable subset from a set $S$ of uncountable cardinals makes it progressive, then $S$ is almost progressive. Also observe that any family $S$ of cardinals of size, $|S|$, strictly less than the first fixed point of the $\aleph$-function is almost progressive. To see this write $S= S_{<} \cup S_= \cup S_{>}$,  where $S_{<}=\{ \kappa \in S : \kappa < |S|\}$, $S_= = S \cap \{|S|\}$ and $S_{>}=\{\kappa \in S : \kappa > |S|\}$. Then $S_=$ and $S_{>}$ are clearly progressive.  By hypothesis on $S$ we have $|S_{<}| < |S|$. So we can repeat this process  on $S_{<}$. After finitely many iterations we have decomposed $S$ into progressive subsets.

For a cardinal $\kappa$, the ideal $J_{<\kappa}(S)$ is the set of subsets $S'$ of $S$ such that for every maximal ideal $\mathcal{M}$ on $S$, if $S \setminus S'$ is in $\mathcal{M}$ then $\cof{\prod S / \mathcal{M}} < \kappa$. 
We abbreviate $J_{<\kappa}(S)$ to $J_{<\kappa}$, when $S$ is clear from context.

\begin{prop}\label{prog}
Let $S$ be an almost progressive set of regular  cardinals. Then $\spec{\prod S} \subseteq \pcf{S}$.  
\end{prop}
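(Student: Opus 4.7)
The plan is to reduce to the case when $S$ itself is progressive and then invoke the main theorem of Shelah's pcf theory.

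For the reduction, write $S = S_1 \cup \cdots \cup S_n$ with each $S_i$ progressive. The canonical isomorphism $\prod S \cong \prod_{i \le n} \prod S_i$ together with Lemma~\ref{prod_spec} gives $\spec{\prod S} = \bigcup_i \spec{\prod S_i}$, while the standard pcf identity $\pcf{S_1 \cup S_2} = \pcf{S_1} \cup \pcf{S_2}$ gives $\pcf{S} = \bigcup_i \pcf{S_i}$. It therefore suffices to prove $\spec{\prod S} \subseteq \pcf{S}$ in the case when $S$ is itself progressive.

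Assume $S$ is progressive and fix a regular $\kappa \in \spec{\prod S}$. By definition $\prod S \tq \kappa$, and from a Tukey map $\psi : \kappa \to \prod S$ we extract a sequence $\{f_\alpha : \alpha < \kappa\}$ in $\prod S$ every $\kappa$-sized subset of which is unbounded. To place $\kappa$ in $\pcf{S}$, I would exhibit an ultrafilter $U$ on $S$ such that the true cofinality of $\prod S / U$ is exactly $\kappa$. The natural starting point is the proper ideal $I = \{A \subseteq S : \{f_\alpha \upharpoonright A\}_\alpha \text{ is bounded in } \prod A\}$: extending the dual filter of $I$ to an ultrafilter $U$ keeps $\{[f_\alpha]_U\}$ unbounded in the linearly ordered quotient $\prod S / U$.

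The main obstacle is arranging that $\cof{\prod S / U}$ comes out equal to $\kappa$ rather than some strictly smaller regular cardinal, since an arbitrary extension of the dual filter may collapse the cofinality. This is precisely where progressiveness of $S$ is used: one refines the choice of $U$ with the help of the pcf generators $\{B_\mu : \mu \in \pcf{S}\}$, forcing some $B_\mu$ with $\mu \ge \kappa$ into $U$ while excluding every $B_{\mu'}$ with $\mu' < \kappa$. Shelah's pcf structure theorem then guarantees that the true cofinality of $\prod S / U$ exists and lies in $\pcf{S}$; the regularity of $\kappa$ together with the unboundedness of $\{[f_\alpha]_U\}$ identifies this cofinality as $\kappa$, giving $\kappa \in \pcf{S}$.
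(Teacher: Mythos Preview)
Your reduction to the progressive case matches the paper's. The gap is in the second half: the two sentences doing the real work are both assertions rather than arguments.

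First, ``extending the dual filter of $I$ to an ultrafilter $U$ keeps $\{[f_\alpha]_U\}$ unbounded'' is not automatic. From $I \subseteq U^*$ you only get $I \subseteq U^*$, not $U^* \subseteq I$; so if some $g$ bounds the $f_\alpha$ modulo $U$, the sets $\{s : f_\alpha(s) > g(s)\}$ lie in $U^*$ but need not lie in $I$, and no contradiction follows. Second, ``excluding every $B_{\mu'}$ with $\mu' < \kappa$'' requires that the filter generated by $I^*$ together with the complements of the $B_{\mu'}$ be proper, i.e.\ that for every finite union $B_{\mu'_1}\cup\cdots\cup B_{\mu'_n}$ the restrictions $f_\alpha\restriction(S\setminus\bigcup_i B_{\mu'_i})$ are still unbounded. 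Since the $B_{\mu'}$ for $\mu'<\kappa$ generate $J_{<\kappa}$, both issues collapse to the single unproved claim that $\{f_\alpha : \alpha<\kappa\}$ is unbounded in $\prod S / J_{<\kappa}$. That is precisely the heart of the proposition, and you have not touched it.

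The paper proves exactly this claim, and does so by a direct counting argument rather than by building an ultrafilter. Assuming for contradiction that some $g$ bounds the $f_\alpha$ modulo $J_{<\kappa}$, it uses the generators $G_\lambda$ of $J_{<\lambda^+}$ over $J_{<\lambda}$ (for $\lambda\in\pcf{S}$, $\lambda<\kappa$) together with cofinal families $\mathcal{C}_\lambda\subseteq\prod G_\lambda$ of size $\lambda$ to produce fewer than $\kappa$ many ``modifications'' of $g$ in $\prod S$, and shows each $f_\alpha$ lies pointwise below one of them. This bounds $\kappa$ many $f_\alpha$ in $\prod S$ by fewer than $\kappa$ functions, contradicting the Tukey-map property. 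Once unboundedness mod $J_{<\kappa}$ is established, the paper finishes not via an explicit ultrafilter but by invoking $\kappa^+$-directedness of $\prod S / J_{<\kappa^+}$ to conclude $J_{<\kappa}\ne J_{<\kappa^+}$, which immediately gives $\kappa\in\pcf{S}$.
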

\begin{proof} First suppose $S$ is almost progressive, and $S=S_1 \cup \cdots \cup S_n$ where each $S_i$ is progressive. Since any subset of a progressive set is progressive, we may suppose the $S_i$'s  partition $S$.  Then $\prod S = \prod S_1 \times \cdots \times \prod S_n$, 
so by Lemma~\ref{prod_spec} $\spec{\prod S} = \spec{\prod S_1} \cup \cdots \cup \spec{\prod S_n}$. Provided each $\spec{\prod S_i} \subseteq \pcf{S_i}$ (which is a subset of $\pcf{S}$) we are done. So we may assume, without loss of generality, that $S$ is progressive.

Take any regular $\kappa$ in $\spec{\prod S}$.  So there is a subset $\{f_\alpha : \alpha < \kappa\}$ of $\prod S$ and (after tidying) an order preserving surjection $\phi : \prod S \to \kappa$ so that $\phi(f_\alpha)=\alpha$. 
We show that $\{f_\alpha : \alpha < \kappa\}$ in $\prod S/J_{<\kappa}(S)$ is not bounded. 
Since (see \cite[1.1]{BM}) subsets of $\prod S/J_{<\kappa^+}$ of size $<\kappa^+$ \emph{are} bounded, we see $J_{<\kappa}$ and  $J_{<\kappa^+}$ are distinct, and so there is an $S'$ in $J_{<\kappa^+} \setminus J_{<\kappa}$. If $\mathcal{M}$ is a maximal ideal containing $S \setminus S'$ then $\prod S / \mathcal{M}$ has (true) cofinality $\kappa$, and $\kappa$ is in $\pcf{S}$, as desired.

Suppose, for a contradiction, that $\{f_\alpha : \alpha < \kappa\}$ is bounded in $\prod S/ J_{<\kappa}$ by $g$. We construct $<\!\!\kappa$-many modifications of $g$ so that every $f_\alpha$ is below one of these modifications in $\prod S$. Then $\kappa$-many $f_\alpha$'s will be bounded in $\prod S$, which is impossible.

For each $\lambda$ in $\pcf{S}$ such that $\lambda < \kappa$, pick  $G_\lambda$ which generates $J_{<\lambda^+}$ from $J_{<\lambda}$ (\cite[4.6]{BM}), and now pick $\mathcal{C}_\lambda$ of size $\lambda$ cofinal in $\prod G_\lambda$ (possible because $\max (\pcf{G_\lambda}) = \cof{\prod G_\lambda} = \lambda$, see \cite[7.10]{BM}). For any $\lambda_1, \ldots , \lambda_n$ and $h_1 \in \mathcal{C}_{\lambda_1}, \ldots , h_n \in \mathcal{C}_{\lambda_n}$ pick $g(\lambda_1, h_1, \ldots, \lambda_n, h_n)$ in $\prod S$ above $g$ and each $h_i$ (where defined). The collection $\mathcal{C}$ of all such modifications, $g(\lambda_1, h_1, \ldots, \lambda_n, h_n)$, of $g$ has size $<\kappa$.

Take any $f_\alpha$. Let $S'=\{ \mu \in S : f_\alpha (\mu) \ge g(\mu)\}$. Since $f_\alpha <_{J_{<\kappa}} g$ the set $S'$ is in $J_{<\kappa}$. So there are $G_{\lambda_1}, \ldots , G_{\lambda_n}$ whose union contains $S'$.
For $i=1, \ldots, n$ pick $h_i$ in $\mathcal{C}_{\lambda_i}$ above $f_\alpha \restriction G_{\lambda_i}$. Now $g(\lambda_1, h_1, \ldots , \lambda_n, h_n)$ is in $\mathcal{C}$ and is above $f_\alpha$ in $\prod S$ -- as required to complete the proof.
\end{proof}

\begin{theorem}\label{specomom} \ 

(1) For all subsets $S_0$ of $\spec{\omega^\omega,\le^*}$ of size strictly less than $\min ( \spec{\omega^\omega, \le^*})$ we have $\pcf{S_0} \subseteq \spec{\omega^\omega, \le^*}$.
In particular,  if $\spec{\omega^\omega, \le^*}$ is progressive  
 then $\spec{\omega^\omega, \le^*} = \pcf{\spec{\omega^\omega, \le^*}}$.
 
(2) If $S$ is an almost  progressive set of uncountable regular cardinals (respectively, and such that $S=\pcf{S}$) then there is a ccc forcing $\mathbb{P}$ such that in $\mathbb{V}^\mathbb{P}$ we have $\spec{\omega^\omega,\le^*} = \pcf{S}$ (respectively, $\spec{\omega^\omega,\le^*}=S)$.
\end{theorem}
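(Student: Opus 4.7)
The plan for Part (1) is to combine the individual Tukey quotients $(\omega^\omega,\le^*)\tq\kappa$ (for $\kappa\in S_0$) into a single quotient onto $\prod S_0$ using Lemma~\ref{pow_general}. The earlier corollary identifies $\mathfrak{b}=\add{(\omega^\omega,\le^*)}$ as the least element of $\spec{\omega^\omega,\le^*}$, so the hypothesis $|S_0|<\min(\spec{\omega^\omega,\le^*})$ translates to $|S_0|<\add{(\omega^\omega,\le^*)}$, and Lemma~\ref{pow_general} delivers $(\omega^\omega,\le^*)\tq\prod S_0$. For each $\lambda\in\pcf{S_0}$, fix an ideal $I$ on $S_0$ such that $\prod S_0/I$ has true cofinality $\lambda$; the quotient map witnesses $\prod S_0\tq\prod S_0/I\te\lambda$, so composing gives $(\omega^\omega,\le^*)\tq\lambda$, i.e.\ $\lambda\in\spec{\omega^\omega,\le^*}$. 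The ``in particular'' statement now follows by taking $S_0=\spec{\omega^\omega,\le^*}$: progressivity supplies $|S_0|<\min(S_0)$, so $\pcf{\spec{\omega^\omega,\le^*}}\subseteq\spec{\omega^\omega,\le^*}$, and the reverse inclusion is trivial since any regular $\kappa$ lies in $\pcf{\{\kappa\}}$.

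For Part (2), the plan is to invoke Hechler's classical theorem: for any $\sigma$-directed poset $(P,\le)$ in the ground model there is a ccc forcing $\mathbb{P}$ under which $(P,\le)$ embeds cofinally into $(\omega^\omega,\le^*)$. Apply this with $P=\prod S$, which is $\sigma$-directed since the members of $S$ are uncountable regulars. In $\mathbb{V}^\mathbb{P}$ we then have $(\omega^\omega,\le^*)\te\prod S$. Because ccc forcing preserves cardinals and cofinalities of regular cardinals, $S$ remains an almost progressive set of uncountable regular cardinals in $\mathbb{V}^\mathbb{P}$. Consequently, Lemma~\ref{pcf} together with Proposition~\ref{prog}, applied inside $\mathbb{V}^\mathbb{P}$, yields $\spec{\omega^\omega,\le^*}=\spec{\prod S}=\pcf{S}$. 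For the parenthetical strengthening, one uses that when $S=\pcf{S}$ holds in the ground model, the pcf-structure of a progressive ground-model set is preserved by ccc forcing (a standard consequence of pcf theory; see \cite{BM}), so $\pcf{S}=S$ also holds in $\mathbb{V}^\mathbb{P}$, giving $\spec{\omega^\omega,\le^*}=S$.

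The main obstacle is the pcf-preservation claim at the end of Part (2): one must verify that the chosen forcing introduces no new elements into $\pcf{S}$. The heuristic is that since ccc forcing adds no new cofinalities of regular cardinals, the true cofinalities $\cof{\prod S/J}$ for $J$ an ideal on $S$ cannot acquire values outside $\pcf{S}^{\mathbb{V}}$; but a rigorous treatment requires either citing pcf absoluteness results or tailoring the Hechler iteration so that the cofinal copy of $\prod S$ in the final model exhibits exactly the cofinalities specified by $\pcf{S}^{\mathbb{V}}$ and no more. Once this is done, the two cases of Part (2) follow uniformly from the chain of equivalences developed above.
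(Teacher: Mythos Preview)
Your proof is correct and follows essentially the same route as the paper: Part~(1) via Lemma~\ref{pow_general} and Lemma~\ref{pcf}, Part~(2) via Hechler forcing together with Lemma~\ref{pcf} and Proposition~\ref{prog} in the extension. The only noteworthy difference is that for the reverse inclusion $\spec{\omega^\omega,\le^*}\subseteq\pcf{\spec{\omega^\omega,\le^*}}$ you invoke the elementary fact $S\subseteq\pcf{S}$, whereas the paper routes this through Proposition~\ref{prog}; your argument is more direct, and your explicit discussion of pcf-preservation under ccc forcing in Part~(2) makes precise a point the paper leaves implicit.
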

\begin{proof} \ 

{\sl For (1):} Take any subset $S_0$ of $\spec{\omega^\omega,\le^*}$ where $|S_0| < \min ( \spec{\omega^\omega, \le^*})$. Then for each $\kappa$ in $S_0$ we have $(\omega^\omega,\le^*) \tq \kappa$. By the size restriction on $S_0$  and noting (Lemma~\ref{add_cof_spec}) that $\min ( \spec{\omega^\omega, \le^*})$ is the additivity of $(\omega^\omega,\le^*)$, we see  (Lemma~\ref{pow_general}) that  $(\omega^\omega,\le^*) \tq \prod S_0$. Hence, by Lemma~\ref{pcf}, $\pcf{S_0} \subseteq \spec{\prod S_0} \subseteq \spec{\omega^\omega, \le^*}$.

If $\spec{\omega^\omega, \le^*}$ is progressive then let $S_0=\spec{\omega^\omega, \le^*}$. Evidently, $|S_0| < \min ( \spec{\omega^\omega, \le^*})$, so by the above, $\pcf{\spec{\omega^\omega,\le^*}} = \pcf{S_0} \subseteq \spec{\omega^\omega,\le^*}$. Further, as $S_0$  is progressive, we can apply Proposition~\ref{prog} to see $\spec{\omega^\omega,\le^*} =S_0 \subseteq \spec{\prod S_0} \subseteq \pcf{S_0}$. So the claimed equality holds.

{\sl For (2):} Fix the set $S$. Let $P_S=\prod S$. Note that $P_S$ is countably directed and has no upper bound.  Hechler showed
\cite{hechler} that there is a ccc forcing notion, $\mathbb{P}$,  such that in $\mathbb{V}^\mathbb{P}$ the directed set $(\omega^\omega,\le^*)$ has a cofinal set 
order isomorphic to $P_S$, 
and so $(\omega^\omega, \le^*) \te P_S$. Then, in $\mathbb{V}^\mathbb{P}$, by 
Lemma~\ref{pcf} and 
Proposition~\ref{prog}, we 
have $\spec{\omega^\omega,\le^*}= \spec{P_S} = \pcf{S}(=S)$. 
\end{proof}

\section{Tukey Structure of $\K(M)$'s, $M$ Metrizable}\label{Struct}

\subsection{Complete Metrizability}\label{compmet}

Christensen's theorem says that if $M$ is separable metrizable (equivalently, metrizable of countable weight) and $\omega^\omega \tq \K(M)$ then $M$ is completely metrizable. Further, if $M$ and $N$ are both separable, completely metrizable, but not locally compact then $\K(M)$ and $\K(N)$ are Tukey equivalent. We show that both results are false for metrizable spaces of uncountable weight. 

Let $\kappa$ be a cardinal. The \emph{metric hedgehog} of spininess $\kappa$, denoted $H(\kappa)$, has underlying set $\{\ast\} \cup \left((0,1] \times \kappa\right)$ and metric $d(\ast, (x,\alpha))=x$, $d((x,\alpha), (x',\alpha))=|x-x'|$ and $d((x,\alpha), (x',\alpha'))=x +x'$ if $\alpha \ne \alpha'$. Every $H(\kappa)$ is completely metrizable (via the given metric) and has weight $\kappa$.
\begin{lemma}\label{hedge}
For every infinite $\kappa$ we have $\K(H(\kappa)) \te \left([\kappa]^{<\omega} \right)^\omega \te \K(H(\kappa)^\omega)$.
\end{lemma}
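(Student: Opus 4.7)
The plan is to first establish $\K(H(\kappa)) \te ([\kappa]^{<\omega})^\omega$, and then deduce the equivalence involving $H(\kappa)^\omega$ as a corollary. For the latter deduction, Lemma~\ref{km_pres}(3) gives $\K(H(\kappa)^\omega) \te \K(H(\kappa))^\omega$; combining this with the first equivalence and the order-isomorphism $\omega \times \omega \cong \omega$ yields $\K(H(\kappa)^\omega) \te (([\kappa]^{<\omega})^\omega)^\omega \te ([\kappa]^{<\omega})^\omega$.

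To prove $\K(H(\kappa)) \te ([\kappa]^{<\omega})^\omega$, I would exhibit order-preserving maps with cofinal image in each direction, which suffices as both posets are Dedekind complete. The structural observation driving both maps is the decomposition of $H(\kappa) \setminus \{\ast\}$ into the \emph{levels} $L_n := [\tfrac{1}{n+1}, \tfrac{1}{n}] \times \kappa$ for $n \ge 1$, combined with the compactness criterion: $K \in \K(H(\kappa))$ iff $K$ is closed and, for each $n$, the set of spines hitting $K$ at level $n$, namely $F_n(K) := \{\alpha < \kappa : K \cap L_n \cap ((0,1] \times \{\alpha\}) \ne \emptyset\}$, is finite.

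For the forward direction, define $\phi : \K(H(\kappa)) \to ([\kappa]^{<\omega})^\omega$ by $\phi(K) = (F_n(K))_n$. Well-definedness follows from the compactness criterion, order preservation is immediate, and cofinal image is witnessed by the assignment $(F_n)_n \mapsto \{\ast\} \cup \bigcup_n \{\tfrac{1}{n+1}\} \times F_n$, which produces a compact subset (non-central points lie in levels shrinking to $\ast$) mapping above $(F_n)_n$. For the reverse direction, define $\psi : ([\kappa]^{<\omega})^\omega \to \K(H(\kappa))$ by $\psi((F_n)_n) = \{\ast\} \cup \bigcup_n [\tfrac{1}{n+1}, \tfrac{1}{n}] \times F_n$. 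Compactness holds because only finitely many levels reach above any fixed height $\epsilon > 0$ (namely those with $\tfrac{1}{n} \ge \epsilon$) and each $F_n$ is finite; $\psi$ is evidently order-preserving; and the inclusion $K \subseteq \psi((F_n(K))_n)$, which is immediate from the definition of $F_n(K)$, yields cofinal image.

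The main obstacle throughout is the careful verification of compactness of the image sets, which rests on the level decomposition ensuring both that at most finitely many spines are populated above any fixed height and that the pieces accumulate correctly at $\ast$; once this is in place, the order-preservation and cofinality checks are routine, and the $H(\kappa)^\omega$ case is automatic.
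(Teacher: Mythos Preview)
Your proof is correct. Your map $\psi$ is essentially the paper's explicit map (with intervals $[\tfrac{1}{n+1},\tfrac{1}{n}]$ in place of $[2^{-(n+1)},2^{-n}]$), and your deduction of the $H(\kappa)^\omega$ case via Lemma~\ref{km_pres}(3) and the bijection $\omega\times\omega\cong\omega$ matches the paper's. The one genuine difference is the direction $\K(H(\kappa))\tq([\kappa]^{<\omega})^\omega$: the paper does not argue this directly but instead forward-references Proposition~\ref{cm} (whose relevant parts are proved independently of the present lemma), whereas you supply the explicit level-decomposition map $K\mapsto(F_n(K))_n$ together with the compactness criterion. Your route is more elementary and self-contained, avoiding the forward reference; the paper's route has the virtue of signalling that this direction is a special case of a general fact about metrizable spaces of weight $\kappa$.
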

\begin{proof} As $H(\kappa)$ is completely metrizable, part (3) of Proposition~\ref{cm} says 
it suffices to show $\left([\kappa]^{<\omega}\right)^\omega \tq \K(H(\kappa))$. But the following $\phi$ is the required Tukey quotient: $\phi( (F_n)_n) = \{\ast\} \cup \bigcup_n \left([2^{-(n+1)}, 2^{-n}] \times F_n\right)$. 
\end{proof}

\begin{prop}\label{cm} Let $M$ be metrizable of uncountable weight $\kappa$.

(1) $\K(M) \tr [\kappa]^{<\omega}$.

(2) $\K(M) \te [\kappa]^{< \omega}$ if $M$ is locally compact.

(3) If $M$ is not locally compact then $\K(M) \tq \omega^\omega \times [\kappa]^{<\omega}$.

(4) If $M$ is completely metrizable then $\Pk{\kappa} \tq \K(M)$.

(5) If $\kappa < \aleph_\omega$, and $M$ is completely metrizable but not locally compact then $\K(M) \te \Pk{\kappa} \te \omega^\omega \times [\kappa]^{<\omega}$.
\end{prop}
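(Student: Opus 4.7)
The proposition has five parts. I would prove them in order, building from the routine (1)--(3) to the more delicate (4), with (5) following as a corollary.

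For (1), extract a closed discrete subspace $\{x_\alpha : \alpha < \kappa\} \subseteq M$ by choosing one point from each member of a discrete subfamily of size $\kappa$ in a $\sigma$-discrete base of $M$ (which exists since $M$ is metrizable of weight $\kappa$). The map $F \mapsto \{x_\alpha : \alpha \in F\}$ from $[\kappa]^{<\omega}$ to $\K(M)$ is a Tukey map: an unbounded family of finite subsets of $\kappa$ has infinite union, producing an infinite closed discrete subset of $M$, which cannot lie in any compact set. For (2), a locally compact metrizable space of weight $\kappa$ decomposes by paracompactness as a topological sum $\bigsqcup_{\lambda < \kappa} U_\lambda$ of $\sigma$-compact clopen pieces, with each $\K(U_\lambda) \le_T \omega$. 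Since every compact subset of $M$ meets only finitely many pieces, Lemma~\ref{combine} applied to the decomposition $\K(M) = \bigcup_{F \in [\kappa]^{<\omega}} \K(\bigsqcup_{\lambda \in F} U_\lambda)$ gives $\K(M) \le_T \omega \times [\kappa]^{<\omega} \te [\kappa]^{<\omega}$; combined with (1), this yields equality.

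For (3), non-local-compactness embeds a closed copy of the metric fan $\mathbb{F}$ in $M$, giving a Tukey map $\psi_\mathbb{F} : \omega^\omega \to \K(M)$ (via $\K(\mathbb{F}) \te \omega^\omega$ and Lemma~\ref{km_pres}(1)). Choosing the closed discrete set of (1) disjoint from $\mathbb{F}$ (discarding countably many points if needed), set $\psi(\sigma, F) = \psi_\mathbb{F}(\sigma) \cup \{x_\alpha : \alpha \in F\}$. If $\psi(S)$ is bounded by some $K \in \K(M)$, then intersecting with $\mathbb{F}$ bounds the $\omega^\omega$-projection in $\K(\mathbb{F})$, and intersecting with the closed discrete set bounds the $[\kappa]^{<\omega}$-projection; so both projections of $S$ are bounded, hence $S$ is bounded in the product. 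Thus $\psi$ is a Tukey map $\omega^\omega \times [\kappa]^{<\omega} \to \K(M)$.

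For (4), fix a complete metric $d$ on $M$ and, for each $n$, a $2^{-n}$-dense subset $\{y_\alpha^n : \alpha < \kappa\}$ (density equals weight for metrizable spaces). Define $\phi : \Pk{\kappa} \to \K(M)$ by
\[
\phi((F_n)_n) \;=\; \bigcap_n \bigcup_{\alpha \in F_n} \overline{B_{2^{-n}}(y_\alpha^n)}.
\]
Each $\phi((F_n)_n)$ is closed in $M$ and, for each $n$, covered by finitely many sets of diameter $\le 2^{1-n}$; hence it is totally bounded, and by completeness of $d$, compact. The map $\phi$ is clearly order-preserving. Given any $K \in \K(M)$, for each $n$ compactness of $K$ and density of $\{y_\alpha^n\}$ yield a finite $F_n \subseteq \kappa$ with $K \subseteq \bigcup_{\alpha \in F_n} B_{2^{-n}}(y_\alpha^n)$, so $K \subseteq \phi((F_n)_n)$; thus $\phi$ has cofinal image. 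This establishes $\Pk{\kappa} \tq \K(M)$.

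For (5), $\omega_1 \le \kappa < \aleph_\omega$ forces $\kappa = \aleph_n$ for some $n \ge 1$, so Proposition~\ref{aleph_n}(1) gives $\Pk{\kappa} \te \omega^\omega \times [\kappa]^{<\omega}$. Combining with (3) and (4), the chain
\[
\omega^\omega \times [\kappa]^{<\omega} \;\le_T\; \K(M) \;\le_T\; \Pk{\kappa} \;\te\; \omega^\omega \times [\kappa]^{<\omega}
\]
collapses to equality. The main obstacle is (4): the explicit Tukey quotient $\phi$ essentially uses complete metrizability, since without completeness the sets $\phi((F_n)_n)$ need not be compact (being only totally bounded and closed in $M$).
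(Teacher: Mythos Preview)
Your argument for (1) has a gap when $\kappa$ has countable cofinality: a metrizable space of weight $\kappa$ need not contain a closed discrete subspace of size $\kappa$ (equivalently, a $\sigma$-discrete base need not contain a single discrete subfamily of size $\kappa$). For a concrete counterexample with $\kappa = \aleph_\omega$, take $M = \{*\} \cup \bigcup_{n \ge 1} X_n$ where $|X_n| = \aleph_n$, with metric $d(*,x) = 1/n$ for $x \in X_n$ and $d(x,y) = \max(1/n, 1/m)$ for distinct $x \in X_n$, $y \in X_m$. Every point other than $*$ is isolated, so $w(M) = \aleph_\omega$; but any closed discrete $D \subseteq M$ must miss some ball $B(*, 1/N)$, forcing $D \subseteq \{*\} \cup \bigcup_{n \le N} X_n$ and hence $|D| \le \aleph_N < \aleph_\omega$. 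The paper's proof of (1) treats the countable-cofinality case by induction on $\kappa$, splitting into three subcases according to whether some point has arbitrarily small neighborhoods of full weight $\kappa$ or whether $M$ admits a locally finite open cover by sets of strictly smaller weight, and invoking Lemma~\ref{om} or the inductive hypothesis as appropriate. Your argument for (3) inherits the same gap through its reliance on the closed discrete set; once (1) is secured, (3) is easily repaired by observing that for \emph{any} Tukey maps $\psi_1 : \omega^\omega \to \K(M)$ and $\psi_2 : [\kappa]^{<\omega} \to \K(M)$, the map $(\sigma, F) \mapsto \psi_1(\sigma) \cup \psi_2(F)$ is again a Tukey map, since a single bound $K$ on its image bounds each $\psi_i$-image separately.

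Your proof of (4), by contrast, is correct and more direct than the paper's. The paper appeals to the classical embedding of completely metrizable spaces of weight $\kappa$ as closed subspaces of $H(\kappa)^\omega$ and then cites Lemma~\ref{hedge}; you instead build the Tukey quotient explicitly from a sequence of $2^{-n}$-dense sets, with completeness guaranteeing that the nested intersections of finite unions of closed balls are compact. This is a pleasant self-contained construction that avoids the hedgehog machinery entirely.
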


\begin{proof}
We prove (1) by induction on $\kappa$. If $M$ contains a closed discrete subset $E$ of cardinality $\kappa$ ($\ast$) then $\phi : \K(M) \to [\kappa]^{<\omega}$ defined by $\phi(K)=|K \cap E|$ is order preserving and surjective. This occurs if $\kappa$ has uncountable cofinality.

So suppose $\kappa$ has countable cofinality and $(\kappa_n)_n$ is a strictly increasing sequence of regular  cardinals with limit $\kappa$. For each point $x$ in $M$ there is an open neighborhood $U_x$ of $x$ whose closure has minimal weight. Three cases arise.

Suppose, first, that there is an $x$ in $M$ such that $U_x$ has weight $\kappa$. This is equivalent to saying that $x$ has a neighborhood base, $(U_n)_n$ of sets all of weight $\kappa$. We can further assume that for all $n$ we have $\cl{U_{n+1}} \subset U_n$ and $D_n=\cl{U_n} \setminus U_{n+1}$ has weight $\kappa$. In each $D_n$ pick a closed (in both $D_n$ and $M$) discrete subset $E_n$ of size $\kappa_n$. 
Let $M_0= \{x\} \cup \bigcup_n E_n$. Then $M_0$ is a closed subset of $M$, so it suffices to show $\K(M_0) \tr [\kappa]^{<\omega}$. By Lemma~\ref{om} in fact it is sufficient to show $\K(M_0) \tr \prod_n [\kappa_n]^{<\omega}$. To see this is true define $\phi : \K(M_0) \to \prod_n [\kappa_n]^{<\omega}$ by $\phi(K) = (|K \cap E_n|)_n$. Then $\phi$ is clearly order preserving and surjective.

In the remaining cases, for every $x$ in $M$ we know that the weight of $\cl{U_x}$ is $<\kappa$. As $M$ is paracompact we can find a locally finite open cover, $\mathcal{U}$, such that if $U$ is in $\mathcal{U}$ then $w(\cl{U}) < \kappa$. 

For the second case suppose there is an $n$ such that $w(\cl{U}) \le \kappa_n$ for all $U$ in $\mathcal{U}$. For each $U$ in $\mathcal{U}$ pick $x_U$ in $U$, and set $E=\{ x_U : U \in \mathcal{U}\}$. Then $E$ is closed discrete. If $|\mathcal{U}|=|E|<\kappa$ then the weight of $M$ would be no more that $|\mathcal{U}| . \kappa_n$ which is strictly less than $\kappa$ -- a contradiction. Thus $E$ has size $\kappa$, and we are back to ($\ast$). 

In the final case we know there are, for each $n$, elements $U_n$ of $\mathcal{U}$ such that $w(\cl{U_n}) \ge \kappa_n$.  Given a compact subset $K$ of $M$, let $\mathcal{U}_K = \{ U \in \mathcal{U} : \cl{U} \cap K \ne \emptyset\}$, and note that since $\cl{\mathcal{U}}$ is locally finite, $\mathcal{U}_K$ is finite. For each $U$ in $\mathcal{U}$, by inductive hypothesis, there is a Tukey quotient $\phi_U : \K(\cl{U}) \to [w(\cl{U})]^{<\omega}$. Define $\phi : \K(M) \to [\kappa]^{<\omega}$ by $\phi(K)=\bigcup \{\phi_U (K \cap \cl{U}) : U \in \mathcal{U}_K\}$. Then $\phi$ is clearly well-defined (maps into $[\kappa]^{<\omega}$) and order preserving. Take any finite subset $F$ of $\kappa$. Then $F$ is the subset of some $\kappa_n$. Since $\phi_{U_n}$ is a Tukey quotient from $\K(\cl{U_n})$ to $[w(\cl{U_n})]^{<\omega}$, which contains $[\kappa_n]^{<\omega}$, there is a compact subset $K$ of $\cl{U_n}$  such that $\phi_{U_n}(K) \supseteq F$. And now we see $\phi(K) \supseteq F$, and $\phi$ has cofinal image.  

For (2) let $M$ be locally compact. We need to show $[\kappa]^{<\omega} \tq \K(M)$. For each $x$ in $X$ pick open $U_x$ containing $x$ with compact closure. Let $\mathcal{U}''=\{U_x : x \in X\}$, and take first a locally finite open refinement $\mathcal{U}'$ of $\mathcal{U}''$, and -- using the fact that $M$ has weight $\kappa$ -- a subcover $\mathcal{U}$ of $\mathcal{U}'$ of size no more than $\kappa$. Enumerate $\mathcal{U}'=\{ U_\alpha : \alpha < \kappa\}$. Define $\phi : [\kappa]^{<\omega} \to  \K(M)$ by $\phi(F) = \bigcup \{ \cl{U_\alpha} : \alpha \in F\}$. Then $\phi$ is clearly order preserving and has cofinal image.

Claim (3) follows immediately from part (2) and Lemma~\ref{not_loc_cpt}.
Every completely metrizable space of weight $\kappa$ embeds as a closed subspace in $H(\kappa)^\omega$. So  (4) follows from Lemma~\ref{hedge}.
Finally, (5) follows from parts (3) and (4), and Proposition~\ref{aleph_n}
\end{proof}

The next theorem says that, for a fixed uncountable cardinal $\kappa$, there is no directed set $P_\kappa$ such that if $P_\kappa \tq \K(M)$, where $M$ is metrizable of weight $\kappa$, then $M$ is completely metrizable. Indeed,
except under restrictive conditions ($\kappa <\mathfrak{d}$) and only for a limited class of spaces (locally compact), there is no directed set $P_\kappa$ such that if $\K(M) \te P_\kappa$, where $M$ is metrizable of weight $\kappa$, then $M$ is completely metrizable.

Further, although for small $\kappa$ -- those strictly less than $\aleph_\omega$ -- the non-locally compact, completely metrizable spaces of weight $\kappa$ all have the same, up to Tukey equivalence, $\K(M)$; this ceases to hold for weight $\aleph_\omega$.

\begin{theorem}
Let $M$ and $N$ be metrizable with uncountable weight $\kappa$.

(1) If $\kappa < \mathfrak{d}$ then $M$ is locally compact if and only if $\K(M) \te [\kappa]^{<\omega}$.

(2) If $M$ is either not locally compact or $\kappa \ge \mathfrak{d}$ then $\K(M) \te \K(M')$, where $M'$ is the non-completely metrizable space $M \times \Q$.

(3) If $\kappa < \aleph_\omega$ and both $M$ and $N$ are completely metrizable but not locally compact then $\K(M) \te \K(N)$.

(4) The spaces $M'=\bigoplus_n H(\aleph_n)$, and $M''=H(\aleph_\omega)$ are both completely metrizable, not locally compact, and of weight $\aleph_\omega$, but $\K(M') \not\te \K(M'')$.
\end{theorem}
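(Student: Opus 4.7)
My plan is to handle each part by reducing to the calculations already recorded in Proposition~\ref{cm}, Lemma~\ref{hedge}, and Proposition~\ref{aleph_n}.

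For part (1), the forward direction is Proposition~\ref{cm}(2). For the converse I would argue by contrapositive: if $M$ were not locally compact, Proposition~\ref{cm}(3) would give $\omega^\omega \times [\kappa]^{<\omega} \le_T \K(M) \te [\kappa]^{<\omega}$, and comparing cofinalities would yield $\mathfrak{d} \le \kappa$, contradicting $\kappa < \mathfrak{d}$.

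For part (2), the space $M \times \Q$ contains a closed copy of $\Q$ and so is not completely metrizable. By Lemma~\ref{km_pres}(3), $\K(M \times \Q) \te \K(M) \times \K(\Q)$; projection then gives $\K(M) \le_T \K(M \times \Q)$, and for the reverse inequality I would (by directedness of $\K(M)$) reduce to showing $\K(\Q) \le_T \K(M)$. Here the two hypothesised subcases split naturally: if $M$ is non-locally compact, then chaining Proposition~\ref{cm}(3) with the upper bound $\K(\Q) \le_T \omega^\omega \times [\omega_1]^{<\omega}$ of Theorem~\ref{TI_bounds} does the job; if $\kappa \ge \mathfrak{d}$, then $\cof{\K(\Q)} \le \mathfrak{d} \le \kappa$, so by Lemma~\ref{k} and Proposition~\ref{cm}(1), $\K(\Q) \le_T [\kappa]^{<\omega} \le_T \K(M)$. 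Part (3) I would deduce immediately from Proposition~\ref{cm}(5), which identifies both $\K(M)$ and $\K(N)$ with $\Pk{\kappa}$.

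Part (4) is the substantive case. The topological hypotheses on $M'$ and $M''$ are routine, and Lemma~\ref{hedge} gives $\K(M'') \te \Pk{\aleph_\omega}$. The crux will be to establish the upper bound $\K(M') \le_T \omega^\omega \times [\aleph_\omega]^{<\omega}$. Since every compact subset of $\bigoplus_n H(\aleph_n)$ is supported on finitely many summands, I can write $\K(M') = \bigcup_{F \in [\omega]^{<\omega}} \K\bigl(\bigoplus_{n \in F} H(\aleph_n)\bigr)$, and by Lemma~\ref{km_pres}(3), Lemma~\ref{hedge}, and Proposition~\ref{aleph_n}(1) each piece is Tukey equivalent to $\omega^\omega \times [\max F]^{<\omega}$, hence Tukey below $\omega^\omega \times [\aleph_\omega]^{<\omega}$; Lemma~\ref{combine} (absorbing $[\omega]^{<\omega}$ into $[\aleph_\omega]^{<\omega}$) then yields the global bound. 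Separation follows at once: any equivalence $\K(M') \te \K(M'')$ would force $\Pk{\aleph_\omega} \te \K(M'') \le_T \K(M') \le_T \omega^\omega \times [\aleph_\omega]^{<\omega}$, contradicting the ZFC inequality $\Pk{\aleph_\omega} >_T \omega^\omega \times [\aleph_\omega]^{<\omega}$ of Proposition~\ref{aleph_n}(2). The main obstacle is precisely this upper bound on $\K(M')$, since the separation itself is powered by the nontrivial, PCF-theoretic content of Proposition~\ref{aleph_n}(2).
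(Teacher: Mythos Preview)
Your proposal is correct and follows essentially the same route as the paper in all four parts: parts (1)--(3) are reduced to Proposition~\ref{cm} and Theorem~\ref{TI_bounds} exactly as the paper does, and in part (4) both you and the paper identify $\K(M'') \te \Pk{\aleph_\omega}$ via Lemma~\ref{hedge}, bound $\K(M')$ above by $\omega^\omega \times [\aleph_\omega]^{<\omega}$ using the finite-support structure of compact sets in the disjoint sum, and then invoke Proposition~\ref{aleph_n}(2) to separate. The only difference is that you spell out the upper bound for $\K(M')$ via Lemma~\ref{combine}, whereas the paper simply asserts ``it easily follows that $\K(M') \te \omega^\omega \times [\aleph_\omega]^{<\omega}$''; your version is a bit more explicit but otherwise identical in substance.
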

\begin{proof} Note first that from Lemma~\ref{k} it is clear that for any cardinal  $\kappa$ we have $\omega^\omega \times [\kappa]^{<\omega} \te [\kappa]^{<\omega}$ if and only if $\kappa \ge \mathfrak{d}$.
Part (1) now follows from Proposition~\ref{cm}~(2) and~(3). For part (2) suppose $M$ either not locally compact or $w(M) \ge \mathfrak{d}$. Either way, from the observation above and  Proposition~\ref{cm}~(2) and~(3), we have $\K(M) \te \omega^\omega \times [w(M)]^{<\omega} \tq \omega^\omega \times [\omega_1]^{<\omega}$. By Theorem~\ref{TI_bounds} we have  $[\omega_1]^{<\omega} \tq \K(\Q)$. And so $\K(M) \tq \K(M) \times \K(\Q) \te \K(M \times \Q)$.

For (3) see Proposition~\ref{cm}~(5).
For (4) recall $M'=\bigoplus_n H(\aleph_n)$, and $M''=H(\aleph_\omega)$. We know for each $n$ that $\K(H(\aleph_n)) \te \omega^\omega \times [\aleph_n]^{<\omega}$. Since compact subsets of $M'$ are simply finite unions of compact subsets of $H(\aleph_n)$'s, it easily follows that $\K(M') \te \omega^\omega \times [\aleph_\omega]^{<\omega}$. On the other hand, $\K(M'') \te \left([\aleph_\omega]^{<\omega}\right)^\omega$. Now apply Proposition~\ref{aleph_n}~(2). 
\end{proof}

\subsection{The Position of $\K(\Q)$}
 We now continue Fremlin's analysis of the initial structure of $\K(\M)$.  
Recalling that for every co-analytic $M$ either $M$ is Polish or $\K(M) \te \K(\Q)$, the fundamental questions concern the position of $\K(\Q)$. Fremlin specifically asked in \cite{Frem} for a characterization of all metrizable $M$ such that $\K(\Q) \le_T \K(M)$.

\begin{lemma}
Let $M$ be metrizable of uncountable weight.

(1) If $M$ is not locally compact then $\K(M) \tr \K(\Q)$.

(2) If $M$ is locally compact then $\K(M) \tr \K(\Q)$ if and only if $w(M) \ge \mathfrak{d}$.
\end{lemma}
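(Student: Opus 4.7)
The plan is to reduce both parts to combinations of the structural results already established in Proposition~\ref{cm}, Theorem~\ref{TI_bounds}, and Lemma~\ref{k}; no essentially new tool is needed.

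For part~(1), set $\kappa = w(M) \ge \omega_1$. By Proposition~\ref{cm}(3), applied to the non-locally compact metrizable $M$ of weight $\kappa$, we have $\K(M) \tq \omega^\omega \times [\kappa]^{<\omega}$. The obvious order preserving surjection $F \mapsto F \cap \omega_1$ yields $[\kappa]^{<\omega} \tq [\omega_1]^{<\omega}$, hence $\K(M) \tq \omega^\omega \times [\omega_1]^{<\omega}$. Theorem~\ref{TI_bounds}(2) then supplies $\omega^\omega \times [\omega_1]^{<\omega} \tq \K(\Q)$, and composing the three quotients gives $\K(M) \tq \K(\Q)$.

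For part~(2), Proposition~\ref{cm}(2) says $\K(M) \te [\kappa]^{<\omega}$, so the question reduces to: when is $[\kappa]^{<\omega} \tq \K(\Q)$? By Lemma~\ref{k}, this is equivalent to $\cof{\K(\Q)} \le \kappa$, so it suffices to show $\cof{\K(\Q)} = \mathfrak{d}$. The upper bound $\cof{\K(\Q)} \le \cof{\omega^\omega \times [\omega_1]^{<\omega}} = \max(\mathfrak{d},\omega_1) = \mathfrak{d}$ follows from Theorem~\ref{TI_bounds}(2). For the matching lower bound, $\Q$ is not locally compact, so Lemma~\ref{not_loc_cpt} gives $\omega^\omega \le_T \K(\Q)$, i.e.\ $\K(\Q) \tq \omega^\omega$, whence $\cof{\K(\Q)} \ge \cof{\omega^\omega} = \mathfrak{d}$. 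Thus $[\kappa]^{<\omega} \tq \K(\Q)$ precisely when $\kappa \ge \mathfrak{d}$, completing the equivalence.

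The only judgment call is routing through the right prior results; there is no genuine technical obstacle, since all the heavy lifting (the upper bound on $\K(M)$ via hedgehog/paracompactness arguments in Proposition~\ref{cm}, and the upper bound on $\K(\Q)$ via the scattered-height induction of Theorem~\ref{TI_bounds}) has already been done.
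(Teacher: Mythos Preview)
Your proof is correct and follows essentially the same route as the paper: for (1) you invoke Proposition~\ref{cm}(3), pass down to $\omega^\omega \times [\omega_1]^{<\omega}$, and finish with Theorem~\ref{TI_bounds}(2); for (2) you use Proposition~\ref{cm}(2) together with Lemma~\ref{k} and the computation $\cof{\K(\Q)}=\mathfrak{d}$. The only difference is cosmetic: you spell out the two inequalities giving $\cof{\K(\Q)}=\mathfrak{d}$ (citing Lemma~\ref{not_loc_cpt} for the lower bound), whereas the paper simply asserts this follows from Theorem~\ref{TI_bounds}(2), which already contains both bounds.
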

\begin{proof}
If $M$ not locally compact then $\K(M) \tq \omega^\omega \times [w(M)]^{<\omega}$ by part (2) of Proposition~\ref{cm}. But of course $\omega^\omega \times [w(M)]^{<\omega} \tq  \omega^\omega \times [\omega_1]^{<\omega}$ and $\omega^\omega \times [\omega_1]^{<\omega} \tq \K(\Q)$ by Theorem~\ref{TI_bounds}.

While if $M$ is locally compact, we have, by part (1) of Proposition~\ref{cm}, that $\K(M) \te [w(M)]^{<\omega}$. Now the second claim follows from Lemma~\ref{k} once we note that  the cofinality of $\K(\Q)$ is $\mathfrak{d}$ (which is immediate from Theorem~\ref{TI_bounds}~(2), for example).
\end{proof}
So we only need to look at separable metrizable $M$ in Fremlin's question. First we establish a general criterion relating separable metrizable spaces $M$ and $N$ when $\K(M) \tq \K(N)$.

\begin{lemma}
Let $M$ and $N$ be separable metrizable space and let $\D \subseteq \K(N)$. Then the following are equivalent:

(1) $\K(M) \tr (\D, \K(N))$, and 

(2) there is a compact metrizable space $Z$,  closed subset $D$ of $\K(M) \times Z$, and a map $f$ from $D$ to $N$ such that for each $L\in \D$, there is $K\in \K(D)$ such that $f(K) \supseteq L$.
\end{lemma}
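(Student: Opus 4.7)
I would prove the two directions separately. The direction $(2) \Rightarrow (1)$ admits a direct construction; the direction $(1) \Rightarrow (2)$ is the technically harder one, requiring the fabrication of the space $Z$ and the map $f$ from the Tukey data.

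For $(2) \Rightarrow (1)$, given $Z$, $D$ and $f$ as in (2), I would define $\phi : \K(M) \to \K(N)$ by
\[ \phi(K) \;=\; f\bigl(D \cap (\K(K) \times Z)\bigr). \]
Since $\K(K)$ is compact in the Vietoris topology whenever $K$ is compact, $\K(K) \times Z$ is compact metrizable, so its intersection with the closed set $D$ is compact; continuity of $f$ then places $\phi(K)$ in $\K(N)$. The map $\phi$ is order-preserving since $K_1 \subseteq K_2$ forces $\K(K_1) \subseteq \K(K_2)$. For cofinality of $\phi(\K(M))$ for $\D$: given $L \in \D$, pick $K^* \in \K(D)$ with $f(K^*) \supseteq L$; projecting $K^*$ to the first factor yields $\mathcal{K}^* \in \K(\K(M))$, and setting $K = \bigcup \mathcal{K}^* \in \K(M)$ gives $K^* \subseteq D \cap (\K(K) \times Z)$, so $\phi(K) \supseteq f(K^*) \supseteq L$.

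For $(1) \Rightarrow (2)$, let $\phi : \K(M) \to \K(N)$ be order-preserving with image cofinal for $\D$, and embed $N$ into a compact metrizable space $Z$ (e.g., a metric compactification, or $N \hookrightarrow [0,1]^\omega$). The natural attempt is $D = \Gamma := \{(K, z) \in \K(M) \times Z : z \in \phi(K)\}$ with $f$ the projection to $Z$, which lands in $N$ automatically; the covering property is then immediate, for if $L \in \D$ and $\phi(K) \supseteq L$ then $\{K\} \times L \subseteq \Gamma$ is compact and projects onto $L$. The difficulty is that $\Gamma$ need not be closed in $\K(M) \times Z$. My plan is to replace $\phi$ by its upper-semicontinuous envelope
\[ \tilde\phi(K) \;=\; \bigcap \bigl\{ \cl{\textstyle\bigcup \{\phi(K') : K' \in V\}}^{Z} : V \text{ open neighborhood of } K \text{ in } \K(M) \bigr\}, \]
and take $D = \{(K, z) \in \K(M) \times Z : z \in \tilde\phi(K)\}$; a standard Hausdorff-limit argument shows $D$ is closed, and $D \supseteq \Gamma$, so the witness $\{K\} \times L$ still lies in $D$.

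The main obstacle will be producing the continuous $f : D \to N$, because $\tilde\phi(K)$ can contain points of $Z \setminus N$ and the naive projection therefore fails to land in $N$. The hard part of the plan is to construct $f$ either by a Michael-type continuous selection (choosing $f(K, z) \in \phi(K)$ approximating $z$, using that $N$ embeds into a convex subset of a Fr\'echet space) or by enlarging $Z$ to a compact space that encodes such a selection as an extra coordinate, so that an honest coordinate projection carries all of $D$ into $N$ while the crucial witness $\{K\} \times L \subseteq D$ (suitably lifted) is still sent onto $L$ whenever $\phi(K) \supseteq L$.
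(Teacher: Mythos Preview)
Your argument for $(2)\Rightarrow(1)$ is correct; it is a direct unwinding of the paper's transitivity chain $\K(M)\te\K(\K(M)\times Z)\ge_T\K(D)\ge_T(\D,\K(N))$, and arguably cleaner.

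For $(1)\Rightarrow(2)$ you have arrived at exactly the construction the paper uses: take $Z$ a metric compactification of $N$, set $D=\overline{\Gamma}$ in $\K(M)\times Z$, and let $f$ be the second projection. But you have missed the one observation that makes this work, and instead propose selection-theoretic machinery that is both unnecessary and unlikely to succeed as stated. The point is that your envelope $\tilde\phi(K)$ \emph{already} lies in $N$, so the projection honestly lands in $N$. Here is why. Suppose $(K_n,z_n)\in\Gamma$ and $(K_n,z_n)\to(K,z)$ in $\K(M)\times Z$. Since $K_n\to K$ in the Vietoris topology of $\K(M)$, the set $\{K\}\cup\{K_n:n\in\omega\}$ is compact in $\K(M)$, so $K^\ast=K\cup\bigcup_n K_n$ is compact in $M$. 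Because $\phi$ is order-preserving and each $K_n\subseteq K^\ast$, we get $z_n\in\phi(K_n)\subseteq\phi(K^\ast)$ for every $n$; as $\phi(K^\ast)$ is compact (hence closed in $Z$), the limit $z$ lies in $\phi(K^\ast)\subseteq N$. Thus $D\subseteq\K(M)\times N$, $f=\pi_Z\!\restriction\! D$ maps into $N$, and your witness $\{K\}\times L\subseteq\Gamma\subseteq D$ finishes the job exactly as you said.

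So the gap is not in the construction but in the verification: you treated ``$\tilde\phi(K)$ may meet $Z\setminus N$'' as an obstacle requiring new ideas, when in fact the order-preserving hypothesis on $\phi$ (which you never used in this direction) rules it out in two lines.
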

\begin{proof}
To show that (2) implies (1), note first that, as $Z$ is compact, $\K(M) \te \K(M \times Z)$. As $D$ is a closed subset of $\K(M) \times Z$, which is a closed subset of $\K(M) \times Z)$, we see $\K(M \times Z) \tr \K(D)$. Finally the map $\phi : \K(D) \to \K(N)$ defined by $\phi (K) = f(K)$ witnesses $\K(D) \tr (\D, \K(N))$. Claim (1) follows by transitivity of $\tq$.

Suppose, then, that (1) holds, and $\phi : \K(M) \to \K(N)$ is an order preserving relative Tukey quotient. Let $Z$ be any metrizable compactification of $N$. Let $C_0 = \{ (K,L) \in \K(M) \times \K(N) : L \subseteq \phi(K)\}$. Let $C$ be the closure of $C_0$ in $\K(M) \times \K(Z)$.

We know that $C[\K(M)] \subseteq \K(N)$. Let $D=C \cap (\K(M) \times Z)$. Then $D$ is a closed subset of $\K(M) \times Z$. By the previous remark $D$ also equals $C \cap (\K(M) \times N)$. Let $f$ be the projection map from $\K(M) \times \K(Z)$ to $\K(Z)$ restricted to $D$. We verify that $f$ covers elements of $\D$.

Take any compact set $A$ in $\D$. As $\phi$ is a relative  Tukey quotient there is a $K$ in $\K(M)$ such that $\phi(K) \supseteq A$. Let $A_0 = \{ (K,\{x\}) : x \in \phi(K)\}$. Since $\{K\} \times \K(Z)$ is homeomorphic to $\K(Z)$, the set $A_0$ is a subspace of $\K(M) \times \K(Z)$ homeomorphic to $\phi(K)$, which is compact. Now we see that $A_0$ is a compact subset of $C_0$, and hence a compact subset of $C$. Also it is clear from the definitions of $D$ and $A_0$ that $A_0$ is a (compact) subset of $D$, and $f$ carries $A_0$ to $\phi(K)$ which contains $A$.
\end{proof}

Letting $\D = \K(N)$ in the previous lemma, and recalling Lemma~\ref{kkx}, we deduce the following. 

\begin{lemma}\label{compact_covering}
Let $M$ and $N$ be separable metrizable spaces. Then the following are equivalent:

(1) $\K(M) \tr \K(N)$,

(2) there is a compact metrizable space $Z$,  closed subset $D$ of $\K(M) \times Z$, and a compact covering map $f$ from $D$ onto $N$, and

(3) there is a compact metrizable space $Z$,  closed subset $D$ of $\K(M) \times Z$, and a compact covering map $f$ from $D$ onto $\K(N)$.
\end{lemma}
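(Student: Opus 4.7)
The plan is to derive this as an immediate specialization of the preceding lemma together with Lemma~\ref{kkx}, so I expect the argument to be essentially packaging rather than introducing new ideas.

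For $(1)\Leftrightarrow(2)$ I would take $\D=\K(N)$ in the preceding lemma. The condition there --- for every $L\in\D$ there is a $K\in\K(D)$ with $f(K)\supseteq L$ --- then reads ``every compact subset of $N$ is contained in the $f$-image of some compact subset of $D$''. Combined with the continuity of $f$, which is automatic in the construction used in the preceding lemma (where $f$ is a restriction of a coordinate projection), this is precisely the definition of $f$ being a compact-covering map from $D$ to $N$; surjectivity onto $N$ follows from singletons being compact. Conversely, any compact-covering $f:D\to N$ satisfies the hypothesis of the preceding lemma with $\D=\K(N)$, so we get $\K(M)\tr\K(N)$.

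For $(1)\Leftrightarrow(3)$ I would use Lemma~\ref{kkx}: since $\K(N)\te\K(\K(N))$, transitivity of $\tq$ gives $\K(M)\tr\K(N)$ if and only if $\K(M)\tr\K(\K(N))$. Now $\K(N)$, like $N$, is separable metrizable (the Vietoris topology on $\K(N)$ is induced by the Hausdorff metric, which preserves separability from $N$), so the equivalence $(1)\Leftrightarrow(2)$ just established applies with $\K(N)$ substituted for $N$. The resulting characterization is exactly condition (3).

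I do not anticipate any genuine obstacle here; the substantive work lies entirely in the preceding lemma and in Lemma~\ref{kkx}. The purpose of recording both (2) and (3) separately is, presumably, convenience for later applications, where targeting the hyperspace $\K(N)$ rather than $N$ itself streamlines arguments that involve compact covers of compact families.
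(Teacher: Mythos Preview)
Your proposal is correct and matches the paper's own proof, which is a one-line derivation: set $\D=\K(N)$ in the preceding lemma for $(1)\Leftrightarrow(2)$, and invoke Lemma~\ref{kkx} (plus the separable metrizability of $\K(N)$) for $(1)\Leftrightarrow(3)$. Your remarks about continuity of $f$ and surjectivity via singletons are the only small points one has to check, and you handle them correctly.
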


A space is \emph{Baire} if it satisfies the conclusion of the Baire category theorem (every countable intersection of dense open sets is dense). A space is \emph{hereditarily Baire} if every closed subspace is Baire. Clearly completely metrizable spaces are hereditarily Baire.  Recall \cite{Debs} that a  first countable space is \emph{not} hereditarily Baire if and only if it contains a closed copy of the rationals.
Also let $\K^0(X)=X$ and $\K^{n+1}(X)=\K(\K^n(X))$. By Lemma~\ref{kkx}, $\K(X) \te \K^n(X)$ for all $n \ge 1$.

\begin{theorem}\label{hB}
Let $M$ and $N$ be separable metrizable. Then the following are  equivalent:

(1) $\K(M) \tr \K(\Q)$, 

(2) $\K(M)$ is not hereditarily Baire, and

(3) for some $n$ we have $\K^n(M)$ is not hereditarily Baire.
\end{theorem}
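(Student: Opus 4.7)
I plan to prove the theorem by establishing the cycle $(2) \Rightarrow (3) \Rightarrow (1) \Rightarrow (2)$. The step $(2) \Rightarrow (3)$ is immediate, taking $n = 1$.

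\textbf{$(3) \Rightarrow (1)$:} If $\K^n(M)$ is not hereditarily Baire, then since $\K^n(M)$ is separable metrizable (hence first countable), Debs's characterization gives a closed copy of $\Q$ inside $\K^n(M)$. Applying Lemma~\ref{km_pres}(1) to this closed embedding yields $\K^{n+1}(M) \tq \K(\Q)$, and combining with $\K^{n+1}(M) \te \K(M)$ from Lemma~\ref{kkx} gives $\K(M) \tq \K(\Q)$.

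\textbf{$(1) \Rightarrow (2)$:} Given $\K(M) \tq \K(\Q)$, I invoke Lemma~\ref{compact_covering} with the closed subset $\D = \{\{q\} : q \in \Q\} \subseteq \K(\Q)$ (homeomorphic to $\Q$) to extract a compact metrizable $Z$, a closed set $D \subseteq \K(M) \times Z$, and a compact-covering continuous surjection $f : D \to \Q$. Arguing by contradiction, I suppose $\K(M)$ is hereditarily Baire, and first show that $\K(M) \times Z$ is too: the projection $\pi_1 : \K(M) \times Z \to \K(M)$ is perfect (since $Z$ is compact), so any closed copy $F$ of $\Q$ in $\K(M) \times Z$ would project via the perfect continuous surjection $\pi_1|_F$ to a closed set $\pi_1(F) \subseteq \K(M)$. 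Because $\Q$ has no nonempty compact open subset, $\pi_1(F)$ can have no isolated points, and being a countable metric space it must be $\cong \Q$ by Sierpinski's theorem --- contradicting that $\K(M)$ is hereditarily Baire. Hence $\K(M) \times Z$ is hereditarily Baire, and therefore so is its closed subspace $D$.

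It remains to derive a contradiction from the fact that the hereditarily Baire separable metrizable space $D$ admits a compact-covering continuous surjection onto $\Q$. This final step is the main obstacle. My plan is Baire-category-theoretic: since $D$ is Baire and $D = \bigcup_n f^{-1}(q_n)$ is a countable union of closed sets (where $\{q_n\} = \Q$), some $f^{-1}(q_{n_0})$ has nonempty interior $U_0$ in $D$, on which $f$ is constant. The closed subspace $D_1 = D \setminus U_0$ is hereditarily Baire, and $f|_{D_1}$ remains compact-covering onto $\Q \setminus \{q_{n_0}\} \cong \Q$: for any compact $L \subseteq \Q$ and any compact $K \subseteq D$ with $f(K) \supseteq L$, the compact set $K \setminus U_0 \subseteq D_1$ covers $L \setminus \{q_{n_0}\}$. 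Iterating this procedure and threading together the boundary behavior of the successively removed interiors yields a closed, countable, dense-in-itself subset of $D$ --- that is, a closed copy of $\Q$ inside $D$ --- contradicting the hereditary Baireness of $D$. Arranging the iteration so as to produce a genuinely \emph{dense-in-itself} closed set (rather than a merely scattered one) is the technical heart of the argument and mirrors Debs's own construction of closed $\Q$-copies from iterated failures of the Baire property in first-countable spaces.
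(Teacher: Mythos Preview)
Your implications $(2)\Rightarrow(3)\Rightarrow(1)$ coincide with the paper's, and your setup for $(1)\Rightarrow(2)$ --- producing a closed $D\subseteq\K(M)\times Z$ with a compact-covering $f:D\to\Q$, and showing $D$ is hereditarily Baire --- is exactly the paper's (indeed your Debs/Sierpi\'nski argument for the hereditary Baireness of $\K(M)\times Z$ is more detailed than the paper's one-line claim).

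The divergence, and the gap, is in the final step. The paper does \emph{not} attempt a direct Baire-category iteration. Instead it invokes the theorem of Just and Wicke \cite{JW}: a compact-covering map from a metric space onto a \emph{countable} metric space is inductively perfect. This immediately yields a closed $D'\subseteq D$ on which $f$ is perfect; then $D'$ is $\sigma$-compact (as a union of the compact fibres $g^{-1}\{q\}$) and hereditarily Baire, hence Polish, and so its perfect image $\Q$ would be Polish --- contradiction.

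Your proposed replacement for this citation is not a proof but a programme. Repeatedly peeling off interiors of fibres does give a decreasing sequence of closed hereditarily Baire sets, but you give no mechanism that forces the boundary points you collect to be dense-in-themselves rather than scattered, nor any control preventing successive interiors from eating away at the same fibre, nor any argument that the resulting set is closed in $D$. You yourself flag ``arranging the iteration so as to produce a genuinely dense-in-itself closed set'' as ``the technical heart'' --- and it is precisely this heart that is missing. Absent a concrete construction (which would amount to reproving a special case of the Just--Wicke theorem), the argument for $(1)\Rightarrow(2)$ is incomplete. Either supply that construction in full, or cite \cite{JW} as the paper does.
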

\begin{proof} Clearly (2) implies (3). Suppose (3) holds and for some $n$ we know $\K^n(M)$ is not hereditarily Baire. Then $\K^n(M)$ contains a closed copy of the rationals, $\Q$, so $\K^{n+1}(M) = \K (\K^n(M)) \tq \K(\Q)$. But  $\K(M) \te \K^{n+1}(M)$, so (1) holds.

We show (1) implies (2). 
For a contradiction, suppose $\K(M) \tr \K(\Q)$ but $\K(M)$ is hereditarily Baire. Then by the preceding lemma there is a compact metrizable $Z$, a closed subset $D$ of $\K(M) \times Z$, and a compact-covering map $f$ of $D$ onto $\Q$.

Since $\K(M)$ is hereditarily Baire, $Z$ is compact metrizable, and $D$ is closed, we see that $D$ is hereditarily Baire. As $\Q$ is countable and $f$ is compact-covering by \cite{JW} $f$ is inductively perfect, so there is a closed subset $D'$ of $D$ such that $g=f\restriction D'$ is perfect. But then $D'$ is hereditarily Baire and $\sigma$-compact ($D'= \bigcup g^{-1} \{q\} : q \in \Q\}$), and hence Polish. In turn $\Q$, as the perfect image of $D'$ would be Polish, an obvious contradiction. 
\end{proof}

\begin{prop}
If $B$ is totally imperfect but not scattered then $\K(\Q) \le_T \K(B)$.
\end{prop}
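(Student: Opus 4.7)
The plan is to invoke Theorem~\ref{hB}: to establish $\K(\Q) \le_T \K(B)$ it suffices to exhibit a closed copy of $\Q$ inside some iterated hyperspace $\K^n(B)$.

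First I reduce to the case that $B$ is itself perfect (dense-in-itself). Let $F = B^{(\alpha_\infty)}$ be the perfect kernel of $B$, where $\alpha_\infty < \omega_1$ is the countable ordinal at which the Cantor--Bendixson derivatives of $B$ stabilize. Since $B$ is not scattered, $F$ is non-empty, closed in $B$, and dense-in-itself; as a subspace of the totally imperfect $B$ it is itself totally imperfect. By Lemma~\ref{km_pres}(1) one has $\K(B) \tq \K(F)$, so it is enough to prove $\K(\Q) \le_T \K(F)$.

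If $F$ is countable, then $F$ is a non-empty countable dense-in-itself metrizable space, and so by Sierpi\'nski's classical characterization $F \cong \Q$. The singleton map $x \mapsto \{x\}$ then embeds $F$ as a closed subspace of $\K(F)$: a sequence of singletons $\{x_n\}$ converges in the Vietoris topology if and only if the points $x_n$ converge in $F$, and in that case to the singleton of the limit. This yields a closed copy of $\Q$ in $\K(F)$, so Theorem~\ref{hB} gives $\K(\Q) \le_T \K(F) \le_T \K(B)$. If instead $F$ is uncountable, then $F$ cannot be Polish, since any non-empty perfect Polish metric space contains a Cantor subset whereas $F$ is totally imperfect. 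In this subcase I would construct a closed copy of $\Q$ directly inside $\K(F)$. Fix a countable dense $D \subseteq F$; since $F$ is perfect, $D$ is automatically dense-in-itself, hence $D \cong \Q$. For each $x$ in a carefully chosen countable $E \subseteq F$, pick a sequence $(d^x_n)_n$ in $D \setminus \{x\}$ converging to $x$ in $F$, and consider the compact sets $K_x = \{x\} \cup \{d^x_n : n \in \omega\} \in \K(F)$. The non-Polishness of $F$ should permit $E$ and the sequences to be chosen so that the family $\{K_x : x \in E\}$ is a closed subset of $\K(F)$ homeomorphic to $\Q$.

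The hard part will be this last construction, which matters precisely when $F$ is itself hereditarily Baire (for instance when $B$ is a Bernstein subset of $\R$), so that $F$ contains no closed copy of $\Q$ and the closed $\Q$ must genuinely come from the hyperspace rather than from $F$ directly. The essential leverage is the failure of $F$ to be Polish, which should supply sequences of compact subsets of $F$ converging in $\K(F)$ in just the controlled manner needed. If the direct construction in $\K(F)$ proves stubborn, I would fall back on passing to $\K^2(F) = \K(\K(F))$ and using Lemma~\ref{kkx} together with the compact-covering criterion of Lemma~\ref{compact_covering}, taking as $Z$ a metric compactification of a chosen $Q_0 \subseteq F$ homeomorphic to $\Q$ and cutting down $\K(F) \times Z$ by the closed relation $x \in K$.
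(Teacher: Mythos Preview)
Your reduction to the perfect kernel $F$ and invocation of Theorem~\ref{hB} are correct, and the countable case is fine: if $F$ is countable and crowded then $F\cong\Q$, and the singleton embedding gives a closed copy of $\Q$ in $\K(F)$.

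The genuine gap is the uncountable case, which you yourself flag as ``the hard part'' without actually doing it. Your proposed construction --- picking for each $x$ in some countable $E$ a convergent sequence $K_x=\{x\}\cup\{d^x_n:n\in\omega\}$ and hoping that $\{K_x:x\in E\}$ is a closed copy of $\Q$ in $\K(F)$ --- is not a proof but a wish. Nothing in what you wrote explains how the non-Polishness of $F$ would let you control Vietoris limits of the $K_x$'s so that the family is closed; a Hausdorff limit of such sets is typically a larger compact set, not another $K_y$. The fallback via $\K^2(F)$ and Lemma~\ref{compact_covering} is likewise only gestured at. Since the uncountable case is exactly the substantive one (e.g.\ $B$ a Bernstein set, where $F$ is hereditarily Baire and contains no closed copy of $\Q$), the proposal as it stands does not prove the proposition.

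The paper takes a quite different and much cleaner route that avoids any case split or explicit construction of a copy of $\Q$. After reducing to $B$ crowded and totally imperfect, it shows that $\K(B)$ is itself \emph{meager}, hence not Baire, hence not hereditarily Baire, and Theorem~\ref{hB} applies. The decomposition is explicit: for each $m$ let $\K_m$ be the set of $K\in\K(B)$ having an isolated point witnessed at scale $1/m$. Total imperfectness forces every $K\in\K(B)$ to have an isolated point, so $\K(B)=\bigcup_m\K_m$; crowdedness of $B$ lets one perturb any basic Vietoris neighbourhood to find a finite set with no $1/m$-isolated point, so each $\K_m$ is nowhere dense. This argument is uniform in $|B|$ and replaces your missing construction entirely.
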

\begin{proof}
Let $B$ be totally imperfect but not scattered. Then $B$ contains a closed subspace which is crowded (no isolated points) and totally imperfect. So we may assume $B$ is crowded and totally imperfect. Fix a compatible metric, $d$, on $B$. We show $\K(B)$ is meager, so not (hereditarily) Baire, and hence $\K(B) \tq \K(\Q)$.

For each $m$ in $\N$ set $\K_m = \{ K \in \K(B) : \exists x \in K \text{ such that } B_d(x,1/m) \cap K =\{x\}\}$. 
We verify that each $\K_m$ is nowhere dense, has closure contained in $\K_{2m}$, and their union is $\K(B)$.
The last claim is easy -- if $K$ is in $\K(B)$, then, as $B$ is totally imperfect, $K$ has an isolated point, say $x$, and so for some $m$ we have $B(x,1/m) \cap K =\{x\}$, which means $K$ is in $\K_m$.

Now fix $m$ to the end of the proof. Take any $K \notin \K_{2m}$.  By compactness, find a cover, $U_1=B(x_1^1,1/(2m)), \ldots , U_n=B(x_n^1,1/(2m))$, of $K$.
Then (as $K$ is not in $\K_{2m}$) for each $i$ pick $x_i^2 \in K \cap U_i$ distinct from $x_i^1$ such that $d(x_i^1,x_i^2) < 1/(2m)$, and open disjoint subsets $V_i^1, V_i^2$ of $U_i$ such that $x_i^1 \in V_i^1$, $x_i^2 \in V_i^2$.
Let $T$ be the set of all compact subsets of $B$ contained in $\bigcup_i U_i$ and meeting every $V_i^j$. Then $T$ is open in $\K(B)$ and contains $K$. We show $T$ is disjoint from $\K_m$. Take any $L$ in $T$. Take any $x$ in $L$. This $x$ is in some $U_i$ and for $j=1,2$ there is a $y_i^j \in V_i^j \cap L \subseteq U_i$. Then $x$ is distinct from at least one of $y_i^1$ and $y_i^2$, and $d(x,y_i^j) <1/m$ (the diameter of $U_i$). In other words, $L \notin \K_m$, as desired.

Take any $K \in \K_m$. Take any basic open neighborhood, $\langle U_1, \ldots , U_n\rangle$ of $K$. For $i=1, \ldots, n$ pick $x_i^1$ in $U_i$, and then, using crowdedness of $B$, pick $x_i^2$ in $U_i$ distinct from $x_i^1$ such that $d(x_i^1,x_i^2) < 1/m$. Let $L=\{x_1^1, x_1^2, \ldots , x_n^1, x_n^2\}$. Then $L$ is in the given basic set but is not in $\K_m$. Thus $\K_m$ is nowhere dense. 
\end{proof}

It is of interest to note that $\K(B)$'s, where $B$ is totally imperfect, are cofinal in $\K(\M)$.
\begin{prop} For every separable metrizable $M$ there is a totally imperfect, separable metrizable $B$ such that $\K(B) \tq \K(M)$.
\end{prop}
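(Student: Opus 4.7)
The plan is to realize $B$ as a totally imperfect cofinal subspace of $\K(M)$ (with the Vietoris topology), and then exploit the union map $\phi\colon \K(B) \to \K(M)$ defined by $\phi(\mathcal{K}) = \bigcup \mathcal{K}$. Since the union of a compact subfamily of $\K(M)$ is a compact subset of $M$, this $\phi$ is well-defined, and it is plainly order-preserving. It has cofinal image because, given any $L \in \K(M)$, cofinality of $B$ in $(\K(M),\subseteq)$ supplies a $K \in B$ with $K \supseteq L$, and then $\{K\} \in \K(B)$ with $\phi(\{K\}) = K \supseteq L$. So the existence of such a $B$ will give $\K(B) \tq \K(M)$ as required.

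First I would dispose of the locally compact cases: take $B$ a singleton if $M$ is compact, or $B = \omega$ (discrete) if $M$ is locally compact but not compact. Henceforth assume $M$ is not locally compact, so $M$ contains a closed copy $\mathbb{F}_0$ of the metric fan (with vertex $0_0$) and $|\K(M)| = \ctm$ (since $\K(\mathbb{F}_0) \subseteq \K(M)$ has cardinality $\ctm$). As $\K(M)$ is separable metrizable, it has at most $\ctm$ Cantor subsets. I would then build $B$ by a Bernstein-style transfinite recursion of length $\ctm$: enumerate the elements of $\K(M)$ as $\{L_\alpha : \alpha < \ctm\}$ and the Cantor subsets of $\K(M)$ as $\{C_\alpha : \alpha < \ctm\}$, and at stage $\alpha$ pick $u_\alpha \in \K(M)$ with $u_\alpha \supseteq L_\alpha$ and $u_\alpha \notin \{v_\beta : \beta < \alpha\}$, then pick $v_\alpha \in C_\alpha \setminus \{u_\beta : \beta \leq \alpha\}$. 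Setting $B = \{u_\alpha : \alpha < \ctm\}$, cofinality is immediate, and every Cantor $C_\alpha$ contains $v_\alpha \notin B$, so no Cantor subset of $\K(M)$ is contained in $B$; hence $B$ (with the subspace Vietoris topology) is totally imperfect.

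The main obstacle is verifying this recursion actually runs, which reduces to the cardinality estimate: for every $L \in \K(M)$, the set of compact supersets of $L$ in $M$ has cardinality $\ctm$. Since $\mathbb{F}_0 \setminus \{0_0\}$ is discrete as a subspace of $M$, the intersection $L \cap \mathbb{F}_0$ is always finite. If $0_0 \notin L$, then $\mathbb{F}_0 \setminus L$ is a cofinite subfan of $\mathbb{F}_0$ still containing its vertex, so $|\K(\mathbb{F}_0 \setminus L)| = \ctm$, and $\{L \cup K : K \in \K(\mathbb{F}_0 \setminus L)\}$ gives $\ctm$ distinct compact supersets of $L$. If $0_0 \in L$, then $\{K \in \K(\mathbb{F}_0) : 0_0 \in K\}$ has cardinality $\ctm$, and the map $K \mapsto L \cup K$ on this family is at most $2^{|L \cap \mathbb{F}_0|}$-to-one (hence finite-to-one), again yielding $\ctm$ distinct supersets. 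In either case, at stage $\alpha$ fewer than $\ctm$ points have been forbidden or selected, so the required $u_\alpha$ and $v_\alpha$ exist, completing the construction.
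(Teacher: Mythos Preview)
Your overall strategy---placing $B$ inside $\K(M)$ and using the union map $\phi(\mathcal{K}) = \bigcup \mathcal{K}$---is the same idea the paper uses, but your cardinality estimate in the case $0_0 \in L$ contains a genuine error. The set $\mathbb{F}_0 \setminus \{0_0\}$ is discrete but \emph{not closed} in $M$, so ``compact $\cap$ discrete'' need not be finite: if $L$ is a single spine of the fan (a convergent sequence together with $0_0$), then $L \cap \mathbb{F}_0 = L$ is infinite, and the map $K \mapsto L \cup K$ on $\{K \in \K(\mathbb{F}_0) : 0_0 \in K\}$ is genuinely $\ctm$-to-one over the value $L$ (every compact $K \subseteq L$ containing $0_0$ maps there). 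So your finite-to-one argument fails. The conclusion is nevertheless true and the gap is repairable: since $L \cap \mathbb{F}_0$ is compact while $\mathbb{F}_0$ is not, the complement $\mathbb{F}_0 \setminus L$ must contain points arbitrarily close to $0_0$ (otherwise $L \cap \mathbb{F}_0$ would contain a closed ball about $0_0$ in $\mathbb{F}_0$, which is not compact). Pick a sequence $(x_k)_k$ in $\mathbb{F}_0 \setminus L$ converging to $0_0$; then for each $S \subseteq \omega$ the set $L \cup \{x_k : k \in S\}$ is compact, and these $\ctm$ supersets are pairwise distinct since $\{x_k : k \in S\} = (L \cup \{x_k : k \in S\}) \setminus L$.

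For comparison, the paper bypasses this entire counting argument with a different construction of $B$: rather than a Bernstein-type \emph{subset} of $\K(M)$, it takes $B$ to be the whole of $\K(M)$ equipped with the join $\tau_V \vee \tau_i$ of the Vietoris topology and any totally imperfect separable metrizable topology $\tau_i$ on the underlying set. Then $B$ is totally imperfect (being finer than $\tau_i$), every $\tau$-compact subset of $B$ is $\tau_V$-compact (so $\phi$ is well-defined), and singletons $\{K\}$ witness that $\phi$ is onto. No recursion or cardinality estimate is needed.
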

\begin{proof}
Take any separable metrizable $M$. Write $\tau_V$ for the Vietoris topology on $\K(M)$. Let $\tau_i$ be a totally imperfect, separable metrizable topology on the set $\K(M)$ (note that Bernstein sets, for example, are totally imperfect and have size $\ctm$, so $\tau_i$ exists). 
Let $\tau$ be the join of $\tau_V$ and $\tau_i$, and denote $(\K(M),\tau)$ by $B$.
Then $B$ is a separable metrizable space, whose topology is finer than $\tau_i$, and so is totally imperfect; and finer than $\tau_V$, so every compact subset of $B$ is a compact subset of $(\K(M),\tau_V)$. Recalling that if $\K$ is a compact subset of $\K(M)$ (with Vietoris topology) then $\bigcup \K$ is compact in $M$, then we see that $\phi : \K(B) \to \K(M)$ given by $\phi (K) = \bigcup K$,  is well defined, and clearly order preserving and surjective. In other words, $\K(B) \tq \K(M)$.
\end{proof}

Next we describe separable metrizable spaces $M$ such that $\K(\Q) \tr \K(M)$, and show they must either be Polish or have $\omega_1$ in the spectrum of $\K(M)$.

\begin{theorem}
For a separable metrizable space $M$, $\K(\Q)\tr \K(M)$ if and only if $M$ is a compact-covering image of a co-analytic set. 
\end{theorem}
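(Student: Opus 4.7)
The plan is to prove both directions using Lemma~\ref{compact_covering} (which unpacks $\K(\Q)\tr \K(M)$ into a geometric statement about compact-covering maps) together with the fact, recalled in the introduction from Fremlin, that $\K(C)\le_T \K(\Q)$ for every co-analytic $C$. Throughout, $\K(\Q)\tr\K(M)$ is just $\K(M)\le_T\K(\Q)$.

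\emph{Sufficiency.} Suppose $f\colon C\to M$ is a compact-covering surjection with $C$ co-analytic. Lemma~\ref{km_pres}(2) gives $\K(C)\tq\K(M)$, i.e.\ $\K(M)\le_T\K(C)$. If $C$ is Polish, then from the initial structure of $\K(\M)$ recalled in the introduction, $\K(C)$ is Tukey equivalent to one of $1,\omega,\omega^\omega$, each of which is $\le_T\K(\Q)$ (for $\omega^\omega$, Lemma~\ref{not_loc_cpt} applies since $\Q$ is not locally compact). If instead $C$ is not Polish, Fremlin's theorem gives $\K(C)\te\K(\Q)$. In either case $\K(M)\le_T\K(\Q)$.

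\emph{Necessity.} Suppose $\K(\Q)\tr\K(M)$. Apply Lemma~\ref{compact_covering} with the lemma's $M$ and $N$ taken to be $\Q$ and $M$ respectively to obtain a compact metrizable $Z$, a closed subset $D$ of $\K(\Q)\times Z$, and a compact-covering map $f\colon D\to M$. It remains to certify that $D$ is co-analytic. Embed $\Q$ as $\Q\cap[0,1]$, so $\K(\Q)\subseteq \K([0,1])$. The membership relation $\{(x,K):x\in K\}$ is closed in $[0,1]\times \K([0,1])$ (Vietoris topology), so the set $\{(x,K):x\in K\text{ and }x\notin \Q\}$ is Borel, and $\K([0,1])\setminus \K(\Q)$ is its projection onto $\K([0,1])$, hence analytic; thus $\K(\Q)$ is co-analytic in the compact Polish space $\K([0,1])$. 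Consequently $\K(\Q)\times Z$ is co-analytic in $\K([0,1])\times Z$, and $D$, being the intersection of $\K(\Q)\times Z$ with a closed (hence Borel) subset of $\K([0,1])\times Z$, is co-analytic in the compact metric space $\K([0,1])\times Z$. With $f$ the required compact-covering map, $M$ is then a compact-covering image of the co-analytic space $D$.

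The main obstacle is the descriptive-set-theoretic step in necessity: locating $\K(\Q)$ in $\mathbf{\Pi}^1_1$ inside $\K([0,1])$ and propagating co-analyticity to $D$ through a closed subset of $\K(\Q)\times Z$. Once that is in hand, both directions are short assemblies of Lemma~\ref{compact_covering}, Lemma~\ref{km_pres}(2), Lemma~\ref{not_loc_cpt}, and Fremlin's co-analytic dichotomy.
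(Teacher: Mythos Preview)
Your proof is correct and follows essentially the same route as the paper: both directions hinge on Lemma~\ref{compact_covering} together with Fremlin's result that $\K(C)\le_T\K(\Q)$ for co-analytic $C$, and the fact that $\K(\Q)$ is co-analytic so that the closed $D\subseteq\K(\Q)\times Z$ produced by the lemma is co-analytic. You are simply more explicit than the paper in justifying that $\K(\Q)$ is $\mathbf{\Pi}^1_1$ and in handling the Polish/non-Polish split for $C$.
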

\begin{proof}
Let $\K(\Q) \tr \K(M)$. Then, by Lemma~\ref{compact_covering}, there is a compact $Z$ and a closed $D\subseteq \K(\Q) \times Z$ such that $M$ is a compact covering image of $D$. Since $\K(\Q)$ is co-analytic, $D$ is also co-analytic. 

On the other hand, suppose $M$ is a compact covering image of co-analytic $N$. Then $\K(N) \tr \K(M)$ and, by Theorem~15 from \cite{Fr2}, $\K(\Q) \tr \K(N)$.
\end{proof}

Recall that $\omega_1$ is in the spectrum of $\K(\Q)$. However $\omega_1$ is in the spectrum of $\K(\omega^\omega)$ if and only if $\omega_1 = \mathfrak{b}$. We now see that if $\K(M) \le_T \K(\Q)$ but $M$ is not Polish then $\omega_1$ is (always) in the spectrum of $\K(M)$. 
The following lemma was proven in \cite{KM}. 

\begin{lemma}\label{key} Let $X$ compact and metrizable and let $M$ and $N$ be subspaces of $X$. 
Let $\K$ be a subset of $\K(M)$ and $\D$ be a subset of $\K(N)$. Then the following are equivalent:

(1) $(\K,\K(M)) \tr (\D,\K(N))$, and

(2) there is a closed subset $C$ of $\K(X)^2$ such that $C[\K(M)]= \bigcup \{ C([K]) : K \in \K(M)\}$ is contained in $\K(N)$ and $C[\K]\supseteq \D$.
\end{lemma}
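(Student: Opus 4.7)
The plan is to prove the two implications separately, throughout working in the compact metrizable product $\K(X)^2$.

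For the direction (2) $\Rightarrow$ (1), given a closed $C \subseteq \K(X)^2$ satisfying the stated conditions, I would build an order-preserving relative Tukey quotient $\phi : \K(M) \to \K(N)$ by the formula
\[
\phi(K) \;=\; \bigcup \bigl\{ L \in \K(X) : (K',L) \in C \text{ for some } K' \in \K(K) \bigr\},
\]
where $\K(K)$ denotes the space of compact subsets of $K$. Since $K \subseteq M$ is compact, $\K(K) \subseteq \K(M)$ is compact in the Vietoris topology, so the slice $C \cap (\K(K) \times \K(X))$ is closed in a compact product, hence compact; its projection to the second coordinate is a compact subset of $\K(X)$, contained in $\K(N)$ by hypothesis, and its union is therefore a compact subset of $N$, giving $\phi(K) \in \K(N)$. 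The formula makes $\phi$ order-preserving by inspection. To check the Tukey cofinality condition: given $A \subseteq \K(M)$ cofinal for $\K$ and $D \in \D \subseteq C[\K]$, pick $K_0 \in \K$ with $(K_0,D) \in C$, so that $D \subseteq \phi(K_0)$; then take $K \in A$ with $K \supseteq K_0$, whence monotonicity gives $\phi(K) \supseteq \phi(K_0) \supseteq D$, as required.

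For the direction (1) $\Rightarrow$ (2), using Dedekind completeness of $\K(N)$, fix an order-preserving $\phi : \K(M) \to \K(N)$ witnessing the Tukey quotient in (1). The natural candidate is
\[
C \;:=\; \overline{C_0}^{\,\K(X)^2}, \qquad C_0 := \{(K,L) \in \K(X)^2 : K \in \K(M),\ L \in \K(N),\ L \subseteq \phi(K)\}.
\]
The inclusion $C[\K] \supseteq \D$ is immediate: applying the cofinality condition to the (trivially cofinal for $\K$) set $\K$ itself yields, for each $D \in \D$, a $K_D \in \K$ with $\phi(K_D) \supseteq D$, so $(K_D, D)$ lies already in $C_0 \subseteq C$.

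The main obstacle, and the step I expect to require the most care, is verifying the other inclusion $C[\K(M)] \subseteq \K(N)$. The difficulty is that $N$ need not be closed in $X$, so a Vietoris-convergent net of compact subsets of $N$ may escape $N$ in the limit; a pair $(K,L) \in C \setminus C_0$ with $K \in \K(M)$ could a priori have $L \not\subseteq N$. The planned resolution exploits the order-preserving nature of $\phi$ together with compactness of $\K(K)$: for $(K_\alpha, L_\alpha) \in C_0$ converging to $(K,L)$ with $K \in \K(M)$, pass to a subnet along which $\phi(K_\alpha) \to L^\ast$ in $\K(X)$, then argue via monotonicity and a compactness argument in $\K(K)$ that $L^\ast$ must sit inside $\phi(K) \in \K(N)$, forcing $L \subseteq L^\ast \subseteq N$. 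Rendering this compactness-plus-monotonicity step precise — in particular, controlling $L^\ast$ by the fixed compact set $\phi(K) \subseteq N$ even when the $K_\alpha$ are not themselves subsets of $K$ — is the crux of the proof, and one may have to replace $\phi$ at the outset by a Vietoris-continuous order-preserving Tukey quotient (obtained by a suitable regularization using Dedekind completeness of $\K(N)$) before taking the closure.
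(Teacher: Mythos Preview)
The paper does not actually prove this lemma; it only cites \cite{KM}, so there is no ``paper's own proof'' to compare against directly. I therefore evaluate your argument on its own merits.

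Your direction (2)$\Rightarrow$(1) is correct. The monotonization trick of sweeping over all $K'\in\K(K)$, rather than using only the single fibre $C[K]$, is exactly what is needed to make $\phi$ order-preserving, and the rest follows.

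In the direction (1)$\Rightarrow$(2) you have identified the right construction and the genuine obstacle, but you have not resolved it, and the proposed cure --- replacing $\phi$ by a Vietoris-continuous order-preserving Tukey quotient via some ``regularization'' --- is both unsubstantiated and unnecessary. There is a short, direct argument that closes the gap without any continuity. Suppose $(K_n,L_n)\in C_0$ with $(K_n,L_n)\to(K,L)$ in $\K(X)^2$ and $K\in\K(M)$. The set $\{K_n:n\in\omega\}\cup\{K\}$ is a convergent sequence together with its limit, hence compact in $\K(X)$; therefore its union
\[
K' \;=\; K\cup\bigcup_{n}K_n
\]
is compact in $X$, and it lies in $M$ since each $K_n$ and $K$ does. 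Thus $K'\in\K(M)$. Now $K_n\subseteq K'$ and $\phi$ is order-preserving, so $L_n\subseteq\phi(K_n)\subseteq\phi(K')$ for every $n$. Since $\phi(K')$ is compact, hence closed in $X$, the Vietoris limit $L$ of the $L_n$ satisfies $L\subseteq\phi(K')\subseteq N$, giving $L\in\K(N)$ as required. No passage to subnets, no control of $\limsup\phi(K_\alpha)$, and no regularization of $\phi$ is needed: the single bound $\phi(K')$ absorbs the whole sequence at once.
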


\begin{lemma}\label{downwards_closed}
Suppose $X $ and $Y$ are separable metrizable spaces, $M\subseteq X$ and $N\subseteq Y$ and $\phi$ is a Tukey quotient witnessing $\K(M) \tr \K(N)$. Let $C_0 = \{ (K,L) : L \subseteq \phi (K) \}$ and let $C$ be a closure of $C_0$ in $\K(X)\times \K(Y)$. Then for each $K\in \K(M)$, the set $C[K] = \{ L \in \K(N) : (K,L) \in C \}$ is downwards closed, i.e. if $L \in C[K]$ and $L'\subseteq L$ then $L' \in C[K]$. Hence for each $\K \subseteq \K(M)$, $C[\K]$ is downwards closed. 
\end{lemma}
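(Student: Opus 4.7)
The plan is to work sequentially, using that $\K(X)\times \K(Y)$ is metrizable (both factors carry the Vietoris topology, which in a separable metrizable space coincides with the topology induced by the Hausdorff metric). So to show $C$ is closed under passing from $(K,L)$ to $(K,L')$ whenever $L' \subseteq L$ is compact, I fix a sequence $(K_n,L_n)$ in $C_0$ converging to $(K,L)$, and seek compact sets $L'_n \subseteq L_n$ with $L'_n \to L'$ in $\K(Y)$. Once constructed, $L'_n \subseteq L_n \subseteq \phi(K_n)$ so $(K_n,L'_n)\in C_0$, and $(K_n,L'_n) \to (K,L')$ witnesses $(K,L') \in C$.

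The construction of $L'_n$ is the one nontrivial step. Fix a compatible metric $d$ on $Y$. Since $L' \subseteq L$ and $L_n \to L$ in Hausdorff metric, the quantity $\eta_n = \sup_{y \in L'} d(y,L_n)$ tends to zero. Set $\epsilon_n = \eta_n + 1/n$ and let
\[
L'_n \;=\; \{\, y \in L_n : d(y,L') \le \epsilon_n \,\},
\]
a closed (hence compact) subset of $L_n$ (with $L'_n=\emptyset$ if $L'=\emptyset$). One then verifies directly that $d_H(L'_n,L') \le \epsilon_n$: by construction every $y \in L'_n$ lies within $\epsilon_n$ of $L'$, and for each $y \in L'$ the definition of $\eta_n$ provides $z \in L_n$ with $d(y,z) < \epsilon_n$, whence $z \in L'_n$ and $d(y,L'_n) < \epsilon_n$. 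So $L'_n \to L'$ in $\K(Y)$.

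The second (``hence'') clause is immediate from $C[\K] = \bigcup_{K \in \K} C[K]$: a union of downwards-closed subfamilies of $\K(N)$ is itself downwards closed. The only obstacle in the whole argument is the approximation step above, and specifically the observation that one must widen the $1/n$-tube around $L'$ by the Hausdorff discrepancy $\eta_n$ between $L$ and $L_n$; without that slack there is no reason for individual points of $L'$ to be within $1/n$ of $L_n$. With the slack built in, Hausdorff convergence of $L'_n$ to $L'$ is automatic.
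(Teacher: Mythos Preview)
Your argument is correct and follows essentially the same route as the paper: pick a sequence $(K_n,L_n)\in C_0$ converging to $(K,L)$, then build compact $L'_n\subseteq L_n$ with $L'_n\to L'$, so that $(K_n,L'_n)\in C_0$ converges to $(K,L')$. The only cosmetic difference is that the paper constructs $L'_n$ by intersecting $L_n$ with closures of Vietoris basic open sets, whereas you take a Hausdorff $\epsilon_n$-tube around $L'$; both accomplish the same approximation.
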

\begin{proof}
Suppose $X$, $Y$, $M$, $N$, $\phi$, $C_0$ and $C$ are as above. 
Recall that $\K(Y)$ has a base made out of open sets of the form $B(U_1, U_2, \ldots , U_n ; V_1, V_2, \ldots, V_m) = \{ L \in \K(Y) : L \subseteq \bigcup_{i\leq n} U_i \ \text{and} \ L \cap V_i \neq \emptyset \ \text{for each} \ j\leq m \}$, where each $U_i$ and $V_j$ is an open subset of $Y$. Denote $B(U_1, U_2, \ldots , U_n ; U_1, U_2, \ldots, U_n)$ by $B(U_1, U_2, \ldots , U_n)$.   

Fix $K \in \K(M)$. Let $L \in C[K]$ and suppose $L' \subseteq L$. Since $(K,L) \in C = \cl{C_0}$ and $\K(X)\times \K(Y)$ is metrizable, there exists a sequence $\{(K_n,L_n)\}_{n\in \omega}$ in $C_0$ that converges to $(K,L)$. Let $\B = \{B_n : n\in \omega  \}$ be a base of $L'$ in $\K(Y)$ such that $B_n = B(U_1^n, U_2^n, \cdots , U_{k_n}^n)$ and $\cl{\bigcup_{i\leq k_{n+1}} U_j^{n+1} } \subseteq \bigcup_{i\leq k_n} U_i^n $ for each $n\in \omega$.

Fix $m \in \omega$. Since $L' \in B_m$ we have $L \in B(Y, U_1^m, U_2^m, \cdots , U_{k_m}^m)$ and since $\{L_n\}$ converges to $L$, there exists $r_m \in \omega$  such that $L_{r_m} \in B(Y, U_1^m, U_2^m, \cdots , U_{k_m}^m)$. Set $L'_m = \bigcup_{i\leq k_m} (L_{r_m} \cap \cl{U_i^m})$, which is a compact subset of $N$. Since each $L_m' \in B_{m-1}$, $\{L'_m\}_{m\in \omega}$ converges to $L'$. Since each $L'_m \subseteq L_{r_m} \subseteq \phi(K_{r_m})$, we have that each $(K_{r_m}, L'_m) \in C_0$ and $\{(K_{r_m}, L'_m)\}_{m\in \omega}$ converges to $(K,L')$. Therefore $(K,L') \in C$ and $L' \in C[K]$. 
\end{proof}

\begin{prop}\label{om1inspec}
If $M$ is a separable metrizable space and $\K(\Q) \tr \K(M)$ then either $M$ is Polish or $\K(M) \tr \omega_1$.
\end{prop}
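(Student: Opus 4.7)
The plan is to prove the contrapositive: assuming $M$ is not Polish, I will construct a Tukey map $\psi \colon \omega_1 \to \K(M)$, which is equivalent to $\K(M) \tr \omega_1$. Composing the hypothesis $\K(\Q) \tr \K(M)$ with the upper bound $\K(\Q) \leq_T \omega^\omega \times [\omega_1]^{<\omega}$ from Theorem~\ref{TI_bounds}(2), fix an order-preserving cofinal map $\phi \colon \omega^\omega \times [\omega_1]^{<\omega} \to \K(M)$.

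The key observation is that non-Polishness of $M$ forces the failure of cofinality of $\phi$ on every countably indexed slice. Indeed, for each $\alpha < \omega_1$ the set $[\alpha]^{<\omega}$ is countable, so $\omega^\omega \times [\alpha]^{<\omega} \te \omega^\omega$; were the restricted image $\phi(\omega^\omega \times [\alpha]^{<\omega})$ cofinal in $\K(M)$, we would have $\K(M) \le_T \omega^\omega$, and Christensen's theorem would force $M$ to be Polish, contradicting our assumption. So for each $\alpha < \omega_1$ pick $K_\alpha \in \K(M)$ such that $K_\alpha \not\subseteq \phi(\sigma, F)$ for every $\sigma \in \omega^\omega$ and every $F \in [\alpha]^{<\omega}$, and set $\psi(\alpha) = K_\alpha$.

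To see $\psi$ is a Tukey map, take any $K \in \K(M)$. By cofinality of $\phi$ there exist $\sigma_0 \in \omega^\omega$ and $F_0 \in [\omega_1]^{<\omega}$ with $K \subseteq \phi(\sigma_0, F_0)$. Then for every ordinal $\alpha$ with $\alpha > \max F_0$ we have $F_0 \in [\alpha]^{<\omega}$, so the defining property of $K_\alpha$ gives $K_\alpha \not\subseteq \phi(\sigma_0, F_0) \supseteq K$, whence $K_\alpha \not\subseteq K$. Hence the preimage $\{\alpha < \omega_1 : \psi(\alpha) \subseteq K\}$ is contained in $\max F_0 + 1$, which is a bounded initial segment of $\omega_1$. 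This is precisely what is required for $\psi$ to be a Tukey map.

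The only substantive conceptual step is the diagonal extraction of the $K_\alpha$'s; once the Tukey equivalence $\omega^\omega \times [\alpha]^{<\omega} \te \omega^\omega$ (for countable $\alpha$) is in hand, Christensen's theorem does all the heavy lifting of turning non-Polishness of $M$ into the required failure of cofinality on each slice, and the verification that the resulting $\psi$ is Tukey reduces to a direct unpacking of the cofinality of $\phi$.
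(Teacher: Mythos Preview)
Your proof is correct but takes a genuinely different route from the paper's.

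The paper works directly with the scattered-height stratification $\K(\Q)=\bigcup_{\alpha<\omega_1}\K_\alpha(\Q)$ and invokes Lemmas~\ref{key} and~\ref{downwards_closed} to obtain a closed relation $C\subseteq\K(X)^2$ witnessing $\K(\Q)\tr\K(M)$ whose fibres $\mathcal L_\alpha=C[\K_\alpha(\Q)]$ are \emph{downwards closed} in $\K(M)$. Downwards closure is what allows the paper to define an \emph{order-preserving} Tukey quotient $\K(M)\to\omega_1$ by $L\mapsto\min\{\alpha:L\in\mathcal L_\alpha\}$. Your argument sidesteps all of this machinery: you replace $\K(\Q)$ by its Tukey upper bound $\omega^\omega\times[\omega_1]^{<\omega}$ from Theorem~\ref{TI_bounds}(2), use the trivial filtration $[\omega_1]^{<\omega}=\bigcup_\alpha[\alpha]^{<\omega}$ (where each stage is visibly $\le_T\omega$), and build a Tukey \emph{map} $\omega_1\to\K(M)$ by diagonalizing against the slices. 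Both proofs ultimately rely on Christensen's theorem to rule out cofinality of any single $\omega^\omega$-sized slice when $M$ is not Polish, but yours is more elementary: it needs neither Lemma~\ref{ti}(1) (for the slice bound) nor the closed-relation/downwards-closure apparatus of Lemmas~\ref{key} and~\ref{downwards_closed}. The paper's approach, on the other hand, yields an order-preserving quotient and exhibits the role of the scattered-height filtration of $\K(\Q)$ more transparently.
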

\begin{proof}
Let $C$ be the closed set given by the preceding result that witnesses $\K(\Q) \tr \K(M)$. Let $\mathcal{L}_\alpha = C[\K_\alpha (\Q)]$ for each $\alpha \in \omega_1$. Then $\{ \mathcal{L}_\alpha : \alpha \in \omega_1 \}$ is an increasing sequence of downwards closed collections whose union is $\K(M)$. 
From Lemma~\ref{ti}(1) (with $B=\Q$) and Lemma~\ref{key}, $\omega^\omega \tr \K_\alpha (\Q) \tr (\K_\alpha (\Q), \K(\Q))\tr (\mathcal{L}_\alpha, \K(M))$ for each $\alpha \in \omega_1$. If there exists $\alpha \in \omega_1$ such that $\K(M) = \mathcal{L}_\alpha$ then we have $\omega^\omega \tr \K(M)$, which means (by Christensen's theorem) that $M$ must be Polish. 

If no such $\alpha$ exists, we may assume, without loss of generality, that $\{ \mathcal{L}_\alpha : \alpha \in \omega_1 \}$ is a strictly increasing sequence. Define $\phi : \K(M) \to \omega_1$ by setting $\phi (L) = \alpha$, where $\alpha$ is the smallest ordinal such that $L \in \mathcal{L}_\alpha$. Since each $\mathcal{L}_\alpha$ is downwards closed, $\phi$ is order preserving.
 Since $\{ \mathcal{L}_\alpha : \alpha \in \omega_1 \}$ is a strictly increasing sequence, $\phi (\K(M))$ is cofinal in $\omega_1$. Thus $\K(M) \tr \omega_1$, as claimed.
\end{proof}

\subsection{The Position of $\K(M)$, for $M$ Analytic}

We now examine the relationship between $\K(M)$'s where $M$ is analytic but not co-analytic (equivalently, not Borel) and $\K(\Q)$. There are four possibilities: (i) $\K(\Q) <_T \K(M)$, (ii) $\K(M)$ and $\K(\Q)$ are Tukey incomparable, (iii) $\K(M) <_T \K(\Q)$, or (iv) $\K(M) \te \K(\Q)$.
Fremlin showed, in ZFC, that there are analytic $M$ such that (i) holds. He also showed that under Projective Determinacy for \emph{all} analytic, non-Borel $M$ we have $\K(\Q) <_T \K(M)$. In fact, `for all analytic, non-Polish $M$ we have $\K(\Q) \le_T \K(M)$' holds under much weaker hypotheses. To see this recall that Kanovei and Ostrand showed that the statements, `every hereditarily Baire analytic set is Polish' and `there are no totally imperfect, uncountable co-analytic sets', are equivalent, and are consistent and independent. If in some model every hereditarily Baire analytic set is Polish and for some analytic $M$ we have $\K(\Q) \not\le_T \K(M)$, then by Theorem~\ref{hB} $M$ is Polish.

That there are analytic, non-Borel $M$'s satisfying (iii) and (iv) under $\mathbb{V}=\mathbb{L}$ is established in \cite{GaMedZdom}. We now show that there are analytic, non-Borel $M$'s satisfying (ii).

\begin{exam}[$\omega_1 < \mathfrak{p}$ and $\omega_1^L = \omega_1$]
There is an analytic, non-Borel set $M$ such that $\K(M) \not\tq \K(\Q)$ and $\K(\Q) \not\tq \K(M)$.

Further, $\K(M)$ is hereditarily Baire and has calibre $\omega_1$.
\end{exam}
\begin{proof}
Let $N$ be an $\omega_1$-sized subset of the Cantor set. Let $M$ be the complement of $N$ in the Canor set. As Fremlin has pointed out, under the given set theoretic assumptions, $M$ is analytic, not Borel, and $\K(M) \not\tq \omega_1$. The latter is the same as saying that $\K(M)$ is calibre $\omega_1$. 

As $\K(\Q)$ is not calibre $\omega_1$, we have $\K(M) \not\tq \K(\Q)$. Hence $\K(M)$ is hereditarily Baire (Theorem~\ref{hB}). By Proposition~\ref{om1inspec} we also see $\K(\Q) \not\tq \K(M)$.
\end{proof}

\end{document}